\documentclass[12pt, reqno]{amsart}
\usepackage{amsmath,amsfonts,amssymb,amsthm,amscd}
\usepackage{mathrsfs}
\usepackage[all]{xy}

\textwidth 16cm
\oddsidemargin -0.0cm
\evensidemargin -0.0cm
\topmargin -1.3cm
\textheight 22cm
\parskip 2mm

\usepackage{color}
\usepackage{colordvi}

\newcommand{\on}{\operatorname}
\newcommand{\bb}{\mathbb}
\newcommand{\cal}{\mathcal}
\newcommand{\f}{\mathfrak}
\newcommand{\mbf}{\mathbf}

 \topmargin 0in

\def\C{{\mathbb C}}

\def\Z{{\mathbb Z}}

\def\N{{\mathbb N}}
\def\1{{\bf 1}}

\def \<{\langle}
\def \>{\rangle}
\def \w{\omega}

\def \sl{\frak{sl}}

\def \h{\mathfrak{h}}

\def \w{\omega}
\numberwithin{equation}{section}

\newtheorem{theorem}{Theorem}[section]
\newtheorem{prop}[theorem]{Proposition}
\newtheorem{lem}[theorem]{Lemma}
\newtheorem{cor}[theorem]{Corollary}

\theoremstyle{definition}
\newtheorem{remark}[theorem]{Remark}

\newtheorem{example}[theorem]{Example}

  \begin{document}
\title[Vertex Operator Algebras]{The varieties of semi-conformal vectors  of affine vertex operator algebras }

\author{ Yanjun Chu}
\address[Chu]{1.School of Mathematics and Statistics,  Henan University, Kaifeng, 475004, China\\
2. Institute of Contemporary Mathematics, Henan University, Kaifeng
475004, China}
\email{chuyj@henu.edu.cn}

\author{Zongzhu Lin}
\address[Lin]{ Department of Mathematics, Kansas State University, Manhattan, KS 66506, USA}
\email{zlin@math.ksu.edu}
\begin{abstract}
 This is a continuation of our work to understand vertex operator algebras using the geometric properties of varieties attached to vertex operator algebras. For a class of  vertex operator algebras including affine vertex operator algebras associated to a finite dimensional simple Lie algebra $\f{g}$,  we describe their varieties of semi-conformal vectors by some matrix equations. These matrix equations are too complicated to be solved for us.  However, for  affine vertex operator algebras associated to the simple Lie algebra $\f{g}$,
we find the adjoint group $G$ of $\f{g}$ acts on the corresponding varieties by a natural way, which implies that  such varieties should  be described more clearly by studying the corresponding  $G$-orbit structures. Based on above methods for general cases, as an example,  considering  affine vertex operator algebras associated to the Lie algebra $\f{sl}_2(\C)$, we shall give the decompositions of $G$-orbits of varieties of their semi-conformal vectors according to different levels. Our results imply that  such  orbit structures depends on the levels of affine vertex operator algebras associated to a finite dimensional simple Lie algebra $\f{g}$.\end{abstract}
\subjclass[2010]{17B69}

\maketitle
\section{Introduction}
\subsection{}
This is a continuation of study of vertex operator algebras using the geometry of the variety of semi-conformal vectors, started in \cite{JL2} and \cite{CL}.

Given a verex operator algebra $(V, Y, \mbf{1}, \omega)$, a vertex operator subalgebra $(U, Y, \mbf{1}, \omega')$ in general has a different conformal vector $\omega'\neq \omega$. The subalgebra $U$ (respectively, $\omega'$) is called {\em semi-conformal}~if $ \omega'_n|_U=\omega_n|_U$ for $n\geq 0$ using the notation $ Y(v, z)=\sum_{n}v_n z^{-n-1}$ for all $ v\in V$. It follows from the definition $ \omega'\in V_2$. The set $\on{Sc}(V, \omega)$ of all semi-conformal vectors is a Zariski closed subset of $ V_2$(\cite[Theorem 1.1]{CL}).

In \cite{CL}, we use the geometry of the set of semi-conformal vectors together with the relations of the corresponding subalgebras  to give two characterizations of Heisenberg vertex operator algebras. In the Heisenberg algebra case,  the finite dimensional abelian Lie algebra $ \f{h}$ is equipped with a  non-degenerated symmetric bilinear form. The automorphism group of $\f{h}$ (preserving the form) is the complex orthogonal group which acts on the set $\on{Sc}(V, \omega)$.  In this paper, we study the geometric structures of $\on{Sc}(V, \omega)$  for affine vertex operator algebras $V=V_{\widehat{\f{g}}}(\ell, 0)$ and $L_{\widehat{\f{g}}}(\ell, 0)$, where $L_{\widehat{\f{g}}}(\ell, 0)$ is the simple quotient of $V_{\widehat{\f{g}}}(\ell, 0)$,  associated to a finite dimensional simple Lie algebra $\f{g}$. In this case, the adjoint group $G$ of $\f{g}$ also acts on the variety $\on{Sc}(V, \omega)$. We shall describe  $G$-orbit structures.
\subsection{}
 Denoted by $\on{ScAlg}(V,\omega)$ the set of semi-conformal subalgebras of a vertex operator algebra $V$ with  conformal vector $\omega$. Obviously, $(U,\omega')\longmapsto \omega'$ defines a surjection from
$\on{ScAlg}(V,\omega)$ to $\on{Sc}(V,\omega)$, denoted it by $\pi$. The commutant of  a subalgebra in $V$ gives an operator $C_V$ on $\on{ScAlg}(V,\omega)$
 as follows
 $$
 \begin{array}{lllll}
 C_V:\on{ScAlg}(V,\omega)\longrightarrow \on{ScAlg}(V,\omega)\\
 \hspace{1.9cm}(U,\omega')\longmapsto (C_{V}(U), \w-\w').
 \end{array}
 $$
 And  $C_V$ induces an involution operator $\omega-$ on $\on{Sc}(V,\omega)$ defined by 
$\omega'\longmapsto \omega-\omega'.$
In fact, these maps form a following commutative diagram
 $$\begin{CD}
 \on{ScAlg}(V,\omega)@>C_V>> \on{ScAlg}(V,\omega)\\
 @VV\pi V      @VV\pi V\\
 \on{Sc}(V,\omega)@>\omega->> \on{Sc}(V,\omega).
 \end{CD}
 $$
 
 Semi-conformal subalgebras and semi-conformal vectors are used to study level-rank duality in vertex operator algebras (\cite{JL2}). Such subalgebras are motivated by \cite[Theorem 3.11.12]{LL} and \cite [Theorem 5.1]{FZ} to determine duality pairs of vertex operator subalgebras in Howe duality \cite{LL1,CHZ}, and depend on coset constructions of vertex operator algebras.

\subsection{}
For the surjection $\pi$, the fiber $\pi^{-1}(\omega')$ for each $\omega'\in \on{Sc}(V,\omega)$ is the set of semi-conformal subalgebras with  conformal vector $\omega'$ of $V$. In  $\pi^{-1}(\omega')$, there exists a unique maximal one $U(\omega')$, which is the maximal conformal extension (in $V$) of semi-conformal subalgebras with 
conformal vector $\omega'$ and can be realized as the double commutant $C_{V}(C_{V}(\<\omega'\>))$ of $\<\omega'\>$ in $V$, where $\<\omega'\>$ is   the Virasoro vertex operator algebra generated by $\omega'$.  In fact,
semi-conformal vectors classify semi-conformal subalgebras of $V$ up to conformal extension. Thus,  for each $\omega'\in \on{Sc}(V,\omega)$, by studying the fiber $\pi^{-1}(\omega')$, we can classify the set $\on{ScAlg}(V,\omega)$.
For $\omega',\omega''\in \on{Sc}(V,\omega)$, we define a partial order $\omega'\preceq\omega''$ if $U(\omega')\subseteq U(\omega'')$(\cite[Definition 2.7]{CL}). In Heisenberg algebra case, we described the structure of  $\on{Sc}(V,\omega)$ as $G=\on{Aut}(V, \omega)$-orbits . Moreover, for a vertex operator algebra $(V,\omega)$, when $\on{Sc}(V, \omega)$ satisfies some properties, we characterized   $(V,\omega)$  as a Heisenberg  vertex operator algebra with conformal vector $\omega_{0}$(cf.  \cite[Theorem 1.4 and Theorem 1.5]{CL}). We also note that the group $G=\on{Aut}(V, \omega)$ acts on both $\on{ScAlg}(V, \omega)$ and $\on{Sc}(V, \omega)$ and the map $\pi$ is $G$-equivariant.
\subsection{}
For  affine vertex operator algebras $V=V_{\widehat{\f{g}}}(\ell, 0)$ and $L_{\widehat{\f{g}}}(\ell, 0)$ associated to  a  finite dimensional  simple Lie algebra $\f{g}$,
 we  choose an orthonormal basis of the Lie algebra $\f{g}=V_1$.  Then a vector belongs to $\on{Ker}L(1)\cap V_2$ if and only if there exists a symmetric matrix such that  it can be expressed  as a quadratic form  with respect to the given orthonormal basis  of $\f{g}$ (See Proposition 3.1 in  Section 3).  This proposition  is critical  for us to find semi-conformal vectors of $(V,\omega)$.  With the help of this proposition,  we characterize  semi-conformal vectors of $(V,\omega)$  by some matrix equations (See equations (\ref{e3.4})--(\ref{e3.8}) in Section 3).  But these matrix equations obtained are too complicate to be solved in general. As Heisenberg case indicated in \cite{CL}, the variety $\on{Sc}(V, \omega)$ is closely  related to the affine Lie algebra with a fixed level as well as the lattice structure. For affine vertex operator algebras $V=V_{\widehat{\f{g}}}(\ell, 0)$ and $L_{\widehat{\f{g}}}(\ell, 0)$,  considering the action of the adjoint group $G$ of $\f{g}$ on the variety $\on{Sc}(V, \omega)$, we expect to give decompositions of  $G$-orbits of  $\on{Sc}(V, \omega)$.
 
 To get $G$-orbits of  $\on{Sc}(V, \omega)$ for affine vertex operator algebras 
 $V=V_{\widehat{\f{g}}}(\ell, 0)$ and $L_{\widehat{\f{g}}}(\ell, 0)$ associated to  a  finite dimensional  simple Lie algebra $\f{g}$, in this paper, we shall  consider  affine vertex operator algebras $V_{\widehat{\f{sl}}_2}(\ell,0)$ and $L_{\widehat{\f{sl}}_2}(\ell,0)$ as examples. At first, we study the variety $\on{Sc}(V,\omega)$  for  $V=V_{\widehat{\f{sl}}_2}(\ell,0)$. When the matrix equations  (\ref{e3.4})--(\ref{e3.8}) in Section 3 are applied  to $V_{\widehat{\f{sl}}_2}(\ell,0)$, we can obtain matrix equations describing $\on{Sc}(V_{\widehat{\f{sl}}_2}(\ell,0),\omega)$. On the other hand, since the adjoint group $\on{PSL_2(\C)}$ of $\f{sl}_2(\C)$ is isomorphic to the special orthogonal group $\on{SO}_3(\C)$, then  the $\on{PSL_2(\C)}$-action on  $\on{Sc}(V_{\widehat{\f{sl}}_2}(\ell,0),\omega)$ can be realized by the group $\on{SO}_3(\C)$ in a natural way.  Moreover, we find that the orbits of $\on{Sc}(V_{\widehat{\f{sl}}_2}(\ell,0),\omega)$ are determined completely by solutions with diagonal matrices  in above mentioned matrix equations. Thus, the problems are reduced to find  all solutions with the form of diagonal matrices from obtained matrix equations. Finally, we obtain the decomposition of $\on{PSL_2(\C)}$-orbits of the varieties $\on{Sc}(V_{\widehat{\f{sl}}_2}(\ell,0),\omega)$ for the level $\ell \neq -2$.  Such decompositions can be summarized  as follows

1) If $\ell=0,1$,  then $\on{Sc}(V_{\widehat{\f{sl}}_2}(\ell,0),\omega)=\{0\}\cup\{\omega\};$

2) If $\ell\neq -2,0,1,4$, then 
$\on{Sc}(V_{\widehat{\f{sl}}_2}(\ell,0),\omega)=\on{Orb}_{\omega_{A_1}}\cup \on{Orb}_{\omega_{A_4}}\cup\{0\}\cup\{\omega\},$
 where $\on{Orb}_{\omega_A}=\{\omega_{oAo^{tr}}|o\in \on{SO}_3(\C)\}$ and 
$$
A_1=\left(
\begin{array}{llll}
&\frac{1}{2\ell}&0&0\\
&0&0&0\\
&0&0&0
\end{array}
\right),
A_4=\left(
\begin{array}{llll}
&-\frac{1}{\ell(\ell+2)}&0&0\\
&0&\frac{1}{2(\ell+2)}&0\\
&0&0&\frac{1}{2(\ell+2)}
\end{array}
\right);$$

3) If $\ell=4$, then 
$\on{Sc}(V_{\widehat{\f{sl}}_2}(\ell,0),\omega)=
\{0\}\cup\{\omega\}\cup
\bigcup_{\omega_A\in T}\on{Orb}_{\omega_A},$ where 
$$T=\left\{
A=\left(
\begin{array}{llll}
&a_1&0&0\\
&0&a_2&0\\
&0&0&a_3
\end{array}
\right)\Bigg |tr(A)=\frac{1}{8}~and~
 a_1a_2+a_1a_3+a_2a_3=0\right\}\Bigg /S_3.$$
These results shows that  for $\ell\neq -2,0,1,4$,  
$\on{Sc}(V_{\widehat{\f{sl}_2}}(\ell,0),\omega)$ is 
the union of  finitely  many $\on{PSL_2(\C)}$-orbits and 
has two nontrivial $\on{PSL_2(\C)}$-orbits. 
However, $\on{Sc}(V_{\widehat{\f{sl}}_2}(4,0),\omega)$ 
consists of infinitely many $\on{PSL_2(\C)}$-orbits. 
Moreover, for the simple quotients  
$(L_{\widehat{\f{sl}}_2}(\ell,0),\omega)$ of 
 $V_{\widehat{\f{sl}}_2}(\ell,0) $  for the level 
$\ell \in \Z_{+}$,  we  also prove that they have  the same
 decompositions  as 
 $\on{Sc}(V_{\widehat{\f{sl}}_2}(\ell,0),\omega)$.
\subsection{}
 Our results  imply also  some new problems to 
understand  general affine vertex operator algebras and
lattice vertex operator algebras. On one hand, since the 
vertex operator algebra  $L_{\widehat{\f{sl}}_2}(1,0)$ 
is isomorphic to the lattice vertex operator algebra 
$V_{A_1}$ associated to the root lattice of $A_1$ type.  
Then we know  that $\on{Sc} (V_{A_1},\omega)$ consists of 
two trivial $\on{PSL_2(\C)}$-orbits $\{0\}$ and $\{\omega\}$. When we  consider the orbit structures of $\on{Sc}(L_{\widehat{\f{g}}}(1,0),\omega)$ for the cases of  the simple affine vertex operator algebras $L_{\widehat{\f{g}}}(1,0)$  associated to the finite dimensional simple Lie algebras $\f{g}$  of $A,D,E$ types. We can  study $G$-orbit informations of $\on{Sc}(V,\omega)$ for  the lattice vertex operator algebra $V_{Q_{\f{g}}}$ associated to the root lattice of $\f{g}$ because of  $L_{\widehat{\f{g}}}(1,0)\cong V_{Q_{\f{g}}}$.  On the other hand,  it follows from above results that the $\on{PSL_2(\C)}$-orbit structure of $\on{Sc}(V_{\widehat{\f{sl}}_2}(\ell,0),\omega)$  depends on the level $\ell$, which leads us to think of  some new problems such as,
 why has $\on{Sc}(V_{\widehat{\f{sl}}_2}(\ell,0),\omega)$ finitely or infinitely many $\on{PSL_2(\C)}$-orbits for different levels $\ell$?  what is the level $\ell$ taken such that  $\on{Sc}(V_{\widehat{\f{g}}}(\ell,0),\omega) $ has finitely or infinitely many $G$-orbits?
Such problems can help us to understand deeply the affine vertex operator algebra $V$ by geometric properties of  its varieties $\on{Sc}(V,\omega)$.
\subsection{} By \cite[Lemma. 5.1]{JL2}, a vertex subalgebra $U$ of $V$ cannot contain two different conformal vectors $\omega'$ and $\omega''$ such that both $(U, \omega')$ and  $(U, \omega'')$ are semi-conformal vertex operator subalgebras of $ (V, \omega)$. We remark that on a vertex algebra, there can be many different conformal vectors to make it become non-isomorphic vertex operator algebras, even thought they have the same conformal gradations. The result of  \cite[Lem. 5.1]{JL2} says that no one is a semi-conformal with respect to another. \cite[Example 2.5.9]{BF} provides a large number of such examples on the Heisenberg vertex operator algebras.  Thus the map from $\on{ScAlg}(V, \omega)$ to the set of all vertex subalgebras of $V$, forgetting the conformal structure,  is injective.  The conformal vector in $ V$ uniquely determines the semi-conformal structure on a vertex subalgebra of $V$ if there is any.

One of the main  motivations of this work is to investigate  the conformal structure on a vertex subalgebra of a vertex operator algebra.  In conformal field theory, the conformal vector completely determines the conformal structure (the module structure for the Virasoro Lie algebra).  In mathematical physics, a vertex operator algebra has been investigated extensively as a Virasoro module (see \cite{DMZ,D,KL,DLM,LY,M,LS,S,L,Sh}) by virtue of conformal vector.
\subsection{}  This paper is organized as follows: In Sect.2,
we shall review the basic notions and results of semi-conformal subalgebras(vectors) for a vertex operator algebra  according to \cite{JL2,LL,CL};
In Sect.3, we recall some related results of the affine vertex operator algebras  associated to the finite dimensional simple Lie algebra $\f{g}$,  and then give the description of the variety of semi-conformal vectors of $V_{\widehat{\f{g}}}(\ell,0)$ by some matrix  equations;
Moreover, for affine vertex operator algebras $V_{\widehat{\f{g}}}(\ell,0)$ and $L_{\widehat{\f{g}}}(\ell,0)$,
we shall study the action of the adjoint group $G$ of $\f{g}$ on varieties $\on{Sc}(V_{\widehat{\f{g}}}(\ell,0),\omega)$ and $\on{Sc}(L_{\widehat{\f{g}}}(\ell,0),\omega)$.
In Sect.4, as examples, we shall study  $\on{PSL}_2(\C)$-orbit structures of the varieties  $\on{Sc}(V_{\widehat{\f{sl}}_2}(\ell,0),\omega)$ and $ \on{Sc}(L_{\widehat{\f{sl}}_2}(\ell,0),\omega)$  for $\ell \neq -2,4$; In Sect. 5, we shall describe  $\on{PSL}_2(\C)$-orbits of  the varieties $\on{Sc}(L_{\widehat{\f{sl}}_2}(\ell,0), \omega)$ and $\on{Sc}(L_{\widehat{\f{sl}}_2}(\ell,0), \omega)$ for $\ell=4$.
\subsection{Notations:}  $\C$ is  the field of complex number; $\Z$ is the set of integers; $\Z_{+}$ (resp. $\Z_{-})$ is the set of positive (resp. negative) integers; $\N$ is the set of non-negative integers.

{\em Acknowledgement:} This work started when the first author was visiting Kansas State University from September 2013 to September 2014. He thanks the support by Kansas State University and its hospitality.  The first author also thanks China Scholarship Council for their financial supports. The second author thanks C. Jiang for many insightful discussions. This work was motivated from the joint work with her. The second author also thanks Henan University for the hospitality during his visit in the summer of 2016, during which this work was carried out.

\section{ Semi-conformal vectors and semi-conformal subalgebras of a vertex operator algebra}
\setcounter{equation}{0}
\subsection{}
For basic notions and results associated with vertex operator algebras, one is
referred  to the books \cite{FLM,LL,FHL,BF}.  We will use $(V, Y, 1)$ to denote a vertex algebra and $(V, Y, 1, \omega)$ for a vertex operator algebra. When we deal with several different vertex algebras, we will use $ Y^V$, $1^{V}$, and $\omega^V$ to indicate the dependence of the vertex algebra or vertex operator algebra $V$.  For example $Y^V(\omega^V, z)=\sum_{n\in \Z} L^V(n)z^{-n-2}$.  To emphasize the presence of the conformal vector $\omega^V$, we will simply write $(V, \omega^V)$ for a vertex operator algebra and $V$ simply for a vertex algebra (with $Y^V$ and $1^V$ understood). We refer \cite{BF} for the concept of vertex algebras. Vertex algebras need not be graded, while a vertex operator algebra $(V,\omega^V)$ is always $ \Z$-graded by the $L^V(0)$-eigenspaces $V_n$ with integer eigenvalues $ n \in \Z$. We assume that each $ V_n$ is finite dimensional over $\C$ and $ V_n=0$ if $ n<<0$.   
\subsection{}
 In this section, we shall first review semi-conformal vectors (subalgebras) of a vertex operator algebra (\cite{JL2,CL}).

Let $V$ and $W$ be two vertex operator algebras with conformal vectors
$\omega^V$ and $\omega^W$, respectively. Let $f:V\rightarrow W$ be a homomorphism of vertex algebras. Then  $f$  is called {\em conformal} if $f \circ L^V (n) = L^W(n) \circ f, ~\mbox{for~all}~ n \in\Z,$ i.e., $f(\omega^V ) = \omega^W$.
 We say $f$  is {\em semi-conformal}
if $f \circ L^V (n) = L^W(n) \circ f,$ for all $n\geq 0$.
Let $(V,\omega^V)$ be a vertex subalgebra of $(W,\omega^W)$ and
the map $f: V \rightarrow W$ is the inclusion, we say $V$ is a
conformal subalgebra of $W$ if $f$ is
conformal ($V$ has the same conformal vector with $W$).
If $f$ is semi-conformal, then $(V,\omega^V)$ is called a semi-conformal
subalgebra of $(W,\omega^W)$ and $\omega^V$ is called
a semi-conformal vector of $(W,\omega^W)$.

We remark that if $V$ is of CFT type, then $\omega'\in V_2$ is semi-conformal if and only 
if $\omega'\in Ker(L^V(1))$ and $L'(0)\omega'=2\omega$.
\subsection{}
For a vertex operator algebra $(W,\omega^W)$, we define
$$\begin{array}{lllll}
&\on{ScAlg}(W,\omega^W)=\{(V,\omega^V)\; |\; (V,\omega^V)~ \text{is~a~semi-conformal~subalgebra~of~}(W,\omega^W)\};\\
&\on{Sc}(W,\omega^W)=\{\omega'\in W\; |\;\omega'~\text{is~a~semi-conformal~vector~of}~(W,\omega^W)\};\\
&\overline{\on{S}(W,\omega^W)}=\{(V,\omega')\in \on{ScAlg}(W,\omega^W)|C_{W}(C_{W}(V))=V\},
\end{array}
$$
where $ C_{W}(V)$ is the commutant defined in \cite[3.11]{LL}.  A semi-conformal subalgebra $(U, \omega^U)$ of $W$ is called {\em conformally closed} if $ C_W(C_W(U))=U$ (see \cite{JL2}). So  the set $\overline{S(W,\omega^W)}$ consists of all conformally closed
semi-conformal subalgebras of $(W,\omega^W)$.

It follows from the definition that there is a surjective map $ \on{ScAlg}(W, \omega^W)\rightarrow \on{Sc}(W,\omega^W)$ by $(V, \omega^V)\mapsto \omega^V$.  There is also a surjective map $ \on{ScAlg}(W, \omega^W)\rightarrow \overline{\on{S}(W,\omega^W)}$ defined by $(V, \omega^V)\mapsto (C_W(C_W(V), \omega^V)$. Thus, 
 the restriction of the map $\on{ScAlg}(W,\omega^W)\rightarrow \on{Sc}(W, \omega^W)$ to the set $\overline{\on{S}(W,\omega^W)}$  is a bijection (\cite[Proposition 2.1]{CL}).
 

Let $(V,\omega^V)$ be a semi-conformal subalgebra of $(W,\omega^W)$.
 Then $(V,\omega^{V})$ has a unique maximal conformal extension
$(C_{W}(C_{W}(V)),\omega^{V})$ in $(W,\omega^{W})$ in the sense that
if $(V,\omega^V)\subset (U, \omega^V)$, then $(U, \omega^V)\subset  (C_{W}(C_{W}(V)),\omega^{V})$( see \cite[Corollary 3.11.14]{LL}).  



\subsection{}
Let  $(W,\omega^W)$ be a general $\Z$-graded vertex operator algebra. The set $\on{Sc}(W,\omega^W)$ forms an affine algebraic variety(\cite[Theorem 1.1]{CL}. In fact,  a semi conformal  vector $\omega'\in W$ can be  characterized by algebraic equations of degree at most $2$ as described  in \cite[Proposition 2.2]{CL}. The algebraic variety $\on{Sc}(W,\omega^W)$ has also a partial order $\preceq$ (See \cite[Definition 2.7]{CL}), and this partial order  can be characterized by algebraic equations in  \cite[Proposition 2.8]{CL}.
\subsection{}
 By \cite[Remark 2.10]{CL}, to determine the space 
 $\on{ScAlg}(V,\omega)$, one needs to

(i)  Determine $\on{Sc}(V, \omega)$ and

(ii) For each $\omega'\in \on{Sc}(V, \omega)$, determine  the
 set of conformal subalgebras of $C_{V}(C_{V}(\<\omega'\>))$.

The following lemma is straight forward to verify using the
 definition.

\begin{lem} \label{lem:2.1}Let   
$ \phi: (V, Y^V, \mbf{1}, \omega^V)\rightarrow  
(W, Y^W, \mbf{1}, \omega^W)$ be a conformal vertex operator
 algebra  homomorphism.  For any vertex operator subalgebra  
 $(U, Y, \mbf{1}, \omega')$ of $ V$, 
 $(\phi(U), Y, \mbf{1}, \phi(\omega'))$ is also a vertex 
 operator subalgebra of $(W, Y^W, \mbf{1}, \omega^W)$. Furthermore, $(\phi(U), Y, \mbf{1}, \phi(\omega'))$ is a 
 semi-conformal  (resp.conformal)  subalgebra of  
 $(W, Y^W, \mbf{1}, \omega^W) $ if  $(U, Y, \mbf{1}, \omega')$
is a semi-conformal  (resp.conformal) subalgebra of
$(V, Y^V, \mbf{1}, \omega^V) $.
\end{lem}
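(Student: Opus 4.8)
The plan is to observe that, although $\phi$ is defined on all of $V$, the only input needed for the first assertion is that its restriction $\phi|_U\colon U\to\phi(U)$ is a \emph{surjective} homomorphism of vertex algebras, together with the elementary fact that a surjective homomorphism of vertex algebras carries a conformal vector to a conformal vector (the requisite finiteness being automatic here). First I would record that $\phi|_U$ preserves the vacuum, $\phi(\mbf 1)=\mbf 1$, and intertwines every mode, $\phi(u_n v)=\phi(u)_n\phi(v)$ for $u,v\in U$ and $n\in\Z$; hence $\phi(U)$ is closed under $Y^W$ and contains $\mbf 1$, so it is a vertex subalgebra of $W$, and $\f k:=\ker(\phi|_U)$ is an ideal of $U$ with $\mbf 1\notin\f k$, giving an isomorphism $\phi(U)\cong U/\f k$ of vertex algebras.

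Next I would promote $\phi(\w')$ to a conformal vector on $\phi(U)$. Writing $Y(\w',z)=\sum_n L'(n)z^{-n-2}$ on $U$, the identity $\phi(\w'_m u)=\phi(\w')_m\phi(u)$ shows that the modes of $\phi(\w')$ on $\phi(U)$ are precisely the push-forwards under $\phi$ of the operators $L'(n)$ on $U$; consequently they satisfy the Virasoro relations with the same central charge as $(U,\w')$, the operator $\phi(\w')_0$ acts on $\phi(U)$ as the canonical derivation $D$ (since $Dv=v_{-2}\mbf 1$ is intertwined by $\phi$), and $\phi(\w')_1$ acts as the image of $L'(0)$. Because $L'(0)$ is diagonalizable on $U$ with integer eigenvalues, bounded below, and finite-dimensional eigenspaces, and because $\f k$ is $L'(0)$-stable, the $\phi(\w')_1$-grading on $\phi(U)\cong U/\f k$ inherits all these properties, with $\phi(\w')\in\phi(U)_2$ and $\mbf 1\in\phi(U)_0$; the Jacobi identity is inherited from $W$. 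Hence $(\phi(U),Y^W,\mbf 1,\phi(\w'))$ is a genuine $\Z$-graded vertex operator algebra, i.e. a vertex operator subalgebra of $(W,Y^W,\mbf 1,\w^W)$.

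For the final assertion I would use the conformality of $\phi$, namely $\phi(\w^V)=\w^W$. If $(U,\w')$ is semi-conformal in $(V,\w^V)$, so that $\w'_m|_U=\w^V_m|_U$ for all $m\ge 0$, then applying $\phi$ and comparing $\phi(\w'_m u)=\phi(\w')_m\phi(u)$ with $\phi(\w^V_m u)=\phi(\w^V)_m\phi(u)=\w^W_m\phi(u)$ yields $\phi(\w')_m|_{\phi(U)}=\w^W_m|_{\phi(U)}$ for all $m\ge 0$; this is exactly the assertion that $(\phi(U),\phi(\w'))$ is a semi-conformal subalgebra of $(W,\w^W)$, and the conformal case is the special instance $\w'=\w^V$, where $\phi(\w')=\w^W$ outright. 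No genuine obstacle arises; the only point that needs a moment's care is the bookkeeping of the second paragraph, ensuring that the image is a vertex operator algebra and not merely a vertex algebra — in particular that $\mbf 1\notin\f k$ so the quotient does not collapse, which is immediate since $\phi$ preserves the vacuum and $\mbf 1\ne 0$ in $W$.
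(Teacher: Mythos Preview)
Your proof is correct and is precisely the kind of direct verification from the definitions that the paper has in mind; indeed the paper gives no proof at all, stating only that the lemma ``is straight forward to verify using the definition.'' Your argument carries this out carefully, including the bookkeeping that $\phi(U)$ inherits the requisite grading properties to be a genuine vertex operator algebra, so nothing further is needed.
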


\begin{remark} The lemma has following two immediate consequences.  The first consequence is that the conformal homomorphism $ \phi$ restricts to a map
$\on{Sc}(V, \omega^V)\rightarrow \on{Sc}(W, \omega^W)$. This means that the $\on{Sc}(V, \omega^V)$ is functorial in the category of vertex operator algebra with conformal homomorphisms as morphisms. The second consequence is that $\on{Sc}(V,\omega^V)$ and $\on{Scalg}(V,\omega^V)$ are invariant under automorphism group. Let $ G=\on{Aut}(V, \omega^V)$ be the group of all conformal automorphisms of $(V, \omega^V)$. The lemma implies that $ G$ acts on the above three sets and the projection map $ \pi:  \on{ScAlg}(V, \omega^V)\rightarrow \on{Sc}(V, \omega^V)$ is $ G$-equivariant. Thus the question of determining $ \on{Sc}(V, \omega^V)$ can be reduced to determining the $ G$-orbits in $\on{Sc}(V, \omega^V)$. Also, the subgroup $ G_{\omega'}=\on{Stab}_{G}(\omega')$ acts on the fiber $ \pi^{-1}(\omega')$.
\end{remark}

Note that for  a vertex operator algebra $V=\oplus_{n\in \bb{Z}} V_{n}$ with
$V_{n}=\{v \in V\; |\; L^V(0)(v)=nv\}$
being the eigenspace, the group $ G$ also acts on each $ V_n$ making $V_n$ into a finite dimensional $G$-module. If $ \omega' \in \on{Sc}(V, \omega^V)$, then $ L^V(0)\omega'=L'(0)\omega'=2\omega'$. This implies that $ \on{Sc}(V, \omega^V)\subseteq V_2$.
Furthermore, $ L^V(1)\omega'=L'(1)\omega'=0$ implies that $ \on{Sc}(V, \omega^V)\subseteq \ker(L^V(1))\cap V_2$. In particular, $\ker(L^V(1))\cap V_2$ is a $G$-module. One would be interested in determining the $ G$-module structure.
Thus we have
\begin{prop} Let $(V, \omega^V)$ be a vertex operator algebra, then $ \on{Sc}(V, \omega^V)$ is a $ G=\on{Aut}(V)$-stable subset of $ \ker(L^V(1))\cap V_2$.
\end{prop}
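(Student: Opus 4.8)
\emph{Proof sketch (plan).} The statement has two halves---that $\on{Sc}(V,\omega^V)$ sits inside $\ker(L^V(1))\cap V_2$, and that it is stable under $G=\on{Aut}(V,\omega^V)$---and the plan is to read both off from the definition of a semi-conformal vector together with Lemma \ref{lem:2.1}, which does essentially all the work.

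First I would establish the containment. Given $\omega'\in\on{Sc}(V,\omega^V)$, by definition there is a vertex subalgebra $U$ of $V$ with conformal vector $\omega'$ whose inclusion $U\hookrightarrow V$ is semi-conformal, i.e. $L'(n)=L^V(n)|_U$ for all $n\ge 0$, where $L'(n)$ denotes the Virasoro operators attached to $\omega'$ on $U$. I would then evaluate the cases $n=0$ and $n=1$ of this identity at the vector $\omega'\in U$ itself and use the universal identities $L'(0)\omega'=2\omega'$ and $L'(1)\omega'=0$, valid for the conformal vector of any vertex operator algebra, to conclude $L^V(0)\omega'=2\omega'$ and $L^V(1)\omega'=0$. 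The first places $\omega'$ in the $L^V(0)$-eigenspace $V_2$, the second in $\ker(L^V(1))$; hence $\on{Sc}(V,\omega^V)\subseteq\ker(L^V(1))\cap V_2$. This is exactly the computation already displayed just before the statement, now repackaged as a step of the proof.

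Next, for $G$-stability: since any $\phi\in G$ is a conformal automorphism, it commutes with all $L^V(n)$ and preserves the $\Z$-grading, so $\ker(L^V(1))\cap V_2$ is automatically a $G$-submodule of $V_2$ (the "ambient" part of the claim). To see that $\on{Sc}(V,\omega^V)$ itself is $G$-invariant, I would take $\omega'\in\on{Sc}(V,\omega^V)$ with a witnessing semi-conformal subalgebra $(U,\omega')$ as above and apply Lemma \ref{lem:2.1} with $W=V$ and the conformal homomorphism $\phi$: it yields that $(\phi(U),\phi(\omega'))$ is again a semi-conformal subalgebra of $(V,\omega^V)$, whose conformal vector is $\phi(\omega')$, so $\phi(\omega')\in\on{Sc}(V,\omega^V)$. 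Equivalently, one cites the first consequence in the Remark following Lemma \ref{lem:2.1}---a conformal homomorphism restricts to a map between the sets of semi-conformal vectors---specialized to $W=V$ and $\phi$ an automorphism.

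I do not anticipate a genuine obstacle: this proposition is bookkeeping with the definitions. The one subtlety worth flagging is that being a semi-conformal vector is, by definition, the \emph{existence} of an accompanying semi-conformal subalgebra having $\omega'$ as conformal vector, and it is to that subalgebra that Lemma \ref{lem:2.1} must be applied; if a concrete witness is wanted, $U=\langle\omega'\rangle$ (or its double commutant $C_V(C_V(\langle\omega'\rangle))$) always serves.
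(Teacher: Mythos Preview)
Your proposal is correct and mirrors the paper's own argument essentially verbatim: the paper does not give a separate proof but simply writes ``Thus we have'' after the paragraph computing $L^V(0)\omega'=L'(0)\omega'=2\omega'$ and $L^V(1)\omega'=L'(1)\omega'=0$, together with the Remark following Lemma~\ref{lem:2.1} giving $G$-invariance. You have faithfully unpacked that discussion.
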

\begin{lem} \label {l2.8} Let $(V,\omega^V)$ and $(W,\omega^W)$ be two CFT-type vertex operator algebras, respectively.  If $\rho$ is a conformal vertex operator algebra homomorphism from $V$ to $W$, then
$\rho$ induces a map from $\on{Sc}(V,\omega^V)$ to $\on{Sc}(W,\omega^W)$, denoted by $\widehat{\rho}$. Thus, we have\\
 1) If  $\on{Ker}\rho \cap \on{Sc}(V,\omega^V)=\{0\}$, then $\widehat{\rho}$ is an injective map ;\\
2) If  $\rho$ is surjective and $\on{Ker}\rho \cap (V_1\oplus V_2)=\{0\}$, then $\widehat{\rho}$ is an isomorphism from $\on{Sc}(V,\omega^V)$ to $\on{Sc}(W,\omega^W)$.
\end{lem}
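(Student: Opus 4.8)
The plan is to recognize $\widehat\rho$ as the restriction of $\rho$ and then to transport the CFT-type criterion for semi-conformality along the graded map $\rho$. Well-definedness of $\widehat\rho$ is already available: by the first consequence recorded in the Remark following Lemma~\ref{lem:2.1}, a conformal homomorphism carries $\on{Sc}(V,\omega^V)$ into $\on{Sc}(W,\omega^W)$, so $\widehat\rho=\rho|_{\on{Sc}(V,\omega^V)}$. Being conformal, $\rho$ commutes with $L(0)$, hence $\rho(V_n)\subseteq W_n$ for every $n$; in particular $\on{Sc}(V,\omega^V)\subseteq V_2$ and $\on{Sc}(W,\omega^W)\subseteq W_2$, so $\widehat\rho$ is the restriction to $\on{Sc}(V,\omega^V)$ of the linear map $\rho\colon V_2\to W_2$.

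For part (1) I would take $\omega_1',\omega_2'\in\on{Sc}(V,\omega^V)$ with $\rho(\omega_1')=\rho(\omega_2')=:\bar\omega'$ and set $\nu=\omega_1'-\omega_2'\in V_2\cap\ker\rho$. To invoke the hypothesis $\ker\rho\cap\on{Sc}(V,\omega^V)=\{0\}$ I need $\nu\in\on{Sc}(V,\omega^V)$, which by the CFT-type criterion recorded above means $L^V(1)\nu=0$ and $\nu_1\nu=2\nu$. The first holds since $L^V(1)\omega_i'=0$. For the second, using $(\omega_i')_1\omega_i'=L'(0)\omega_i'=2\omega_i'$ one finds $\nu_1\nu=2\nu$ is equivalent to $(\omega_1')_1\omega_2'+(\omega_2')_1\omega_1'=4\omega_2'$; applying $\rho$ and using that $\bar\omega'$ is semi-conformal in $W$ shows this identity holds modulo $\ker\rho$, but not on the nose. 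This is precisely the obstacle — the difference of two semi-conformal vectors need not be semi-conformal — and I expect it to be the main difficulty of the lemma. I would close it either by the observation that in the situations of interest one in fact has $\ker\rho\cap V_2=\{0\}$ (so $\nu=0$ at once), or by a finer argument showing that a difference of two semi-conformal vectors of $V$ having the same image under a conformal homomorphism is again semi-conformal; granting $\nu\in\on{Sc}(V,\omega^V)$, the hypothesis forces $\nu=0$, hence injectivity.

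For part (2) the stronger hypotheses make the argument clean. Since $\ker\rho$ is a graded ideal, $\ker\rho\cap(V_1\oplus V_2)=\{0\}$ gives $\ker\rho\cap V_1=\ker\rho\cap V_2=\{0\}$, and with surjectivity of $\rho$ (which forces $\rho\colon V_n\to W_n$ onto, by matching graded components) this says $\rho$ restricts to linear isomorphisms $V_1\xrightarrow{\sim}W_1$ and $V_2\xrightarrow{\sim}W_2$. Injectivity of $\widehat\rho$ is then immediate, since $\omega_1'-\omega_2'\in\ker\rho\cap V_2=\{0\}$ — so the obstacle from part (1) does not arise here. For surjectivity, given $\bar\omega'\in\on{Sc}(W,\omega^W)\subseteq W_2$ let $\omega'\in V_2$ be its unique $\rho$-preimage; I would verify $\omega'\in\on{Sc}(V,\omega^V)$ by the CFT-type criterion: $\rho(L^V(1)\omega')=L^W(1)\bar\omega'=0$ forces $L^V(1)\omega'\in\ker\rho\cap V_1=\{0\}$, and $\rho(\omega'_1\omega'-2\omega')=\bar\omega'_1\bar\omega'-2\bar\omega'=0$ forces $\omega'_1\omega'-2\omega'\in\ker\rho\cap V_2=\{0\}$. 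Thus $\widehat\rho$ is a bijection; being the restriction of the linear isomorphism $\rho\colon V_2\to W_2$ carrying the Zariski-closed set $\on{Sc}(V,\omega^V)$ onto the Zariski-closed set $\on{Sc}(W,\omega^W)$, it is an isomorphism of affine varieties. Apart from the point flagged in part (1), every step reduces to the graded-ideal structure of $\ker\rho$ and the CFT-type criterion.
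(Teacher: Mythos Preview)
Your treatment of part~(2) is correct and is essentially the paper's argument: both of you use surjectivity together with $\ker\rho\cap(V_1\oplus V_2)=\{0\}$ to pick the unique preimage $\omega'\in V_2$ of a given $\bar\omega'\in\on{Sc}(W,\omega^W)$, push the semi-conformality relations through $\rho$, and conclude from injectivity of $\rho$ on $V_1$ and $V_2$ that $\omega'$ satisfies them in $V$. The paper phrases the check as $\omega'_n\omega'=\omega^V_n\omega'$ for $n\ge 0$ rather than via the CFT-type criterion $L^V(1)\omega'=0$, $\omega'_1\omega'=2\omega'$, but the content is the same; your version is in fact a bit cleaner.

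For part~(1) you have put your finger on a genuine gap, and it is present in the paper's own proof as well. The paper argues only that a nonzero $\omega'\in\on{Sc}(V,\omega^V)$ satisfies $\rho(\omega')\neq 0$ and then asserts injectivity of $\widehat\rho$. That inference is invalid: $\on{Sc}(V,\omega^V)$ is not a linear subspace of $V_2$, so knowing the fiber over $0$ is $\{0\}$ says nothing about whether two distinct semi-conformal vectors can have the same (nonzero) image. Your attempt to force $\nu=\omega_1'-\omega_2'$ into $\on{Sc}(V,\omega^V)$ runs into exactly the obstruction you describe---the equation $(\omega_1')_1\omega_2'+(\omega_2')_1\omega_1'=4\omega_2'$ holds only modulo $\ker\rho$---and neither you nor the paper closes it. Your first proposed fix is the right one in practice: in the paper's sole application (Proposition~\ref{p3.10}) one actually has $\ker\rho\cap V_2=\{0\}$ in the relevant range, so the stronger hypothesis of part~(2) is what is really being used, and part~(1) in its stated generality is not needed.
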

\begin{proof}
The conformal vertex operator algebra homomorphism $\rho$ is a gradation-preserving map and it satisfies
\begin{equation}
\rho(1_V)=1_W;\rho(\omega^V)=\omega^W;
\end{equation}
\begin{equation}
\rho(Y_V(v,z)u)=Y_W(\rho(v),z)\rho(u), ~for ~\forall v,u\in V.
\end{equation}
For a nonzero vector $\omega'\in \on{Sc}(V,\omega^V)$, we know that $\rho(\omega')\in
 \on{Sc}(W,\omega^W)$. If $\on{Ker}\rho \cap \on{Sc}(V,\omega^V)=\{0\}$,  then $\rho(\omega')\neq 0$. Hence $\widehat{\rho}$ is a injective map.

 For  $\omega''\in \on{Sc}(W,\omega^W) $, there is a semi-conformal  subalgebra $(U,\omega'')$ of $W$ such that $\omega''_n=\omega^W_n$ on $U$ for $n\geq 0$. Since $\rho$ is a surjection, there exists a preimage $\omega'$ in $V$ such that 
 \begin{equation}
 \rho(\omega'_n\omega')=\omega''_n\omega''=\omega^W_n\omega''
 =\rho(\omega^V_n\omega'),~for~n\geq 0.
 \end{equation}
 It follows that $\omega'_1\omega'=\omega^V_1\omega'=2\omega'$. Because of  $\on{Ker}\rho \cap (V_1\oplus V_2)=\{0\}$ and  the relations (2.3),
then we have  $\omega'_n\omega'=\omega^V_n\omega'$  for $n\geq 2$.  By Remark 2.2, we also have
$\omega'_0\omega'=\omega^V_0\omega'$.  Thus, by \cite[Proposition 2.2]{CL}, we know that the preimage $\omega'$
of $\omega''\in \on{Sc}(W,\omega^W)$ is a semi-conformal vector of $(V,\omega^V)$. Finally, we associate the conclusion  1) to get that $\widehat{\rho}$ is an isomorphism from $\on{Sc}(V,\omega^V)$ to $\on{Sc}(W,\omega^W)$.
 \end{proof}

\subsection{}
In fact, the commutant of
$(W,\omega^W)$  can induce an involution $\omega^W-$ of $\on{Sc}(W,\omega^W)$ as follows
 $$
 \begin{array}{lllll}
 \omega^W-:\on{Sc}(W,\omega^W)\longrightarrow \on{Sc}(W,\omega^W)\\
 \hspace{2.8cm}\omega'\longmapsto \omega^W-\omega'.
 \end{array}
 $$
so for  $\omega^1,\omega^2\in \on{Sc}(W,\omega^W)$, we know 
$\omega^W-\omega^1$ and $\omega^W-\omega^2$ are  conformal vectors of commutants $C_{W}(\langle\omega^1\rangle)$ and  $C_{W}(\langle\omega^2\rangle)$, respectively. 
If $\omega^1\preceq \omega^2$, then $\omega^W-\omega^2\preceq \omega^W-\omega^1$.

\section{Semi-conformal vectors of affine vertex operator algebras}
\setcounter{equation}{0}
For a finite dimensional complex simple Lie algebra $ \f{g}$, the corresponding (untwisted) affine Lie algebra is
$\widehat{\f{g}}=\f{g}\otimes \bb{C}[t, t^{-1}]\oplus \bb{C}K$. Let $ \widehat{\f{g}}_{(\geq 0)}=\f{g}\otimes \bb{C}[t]+\bb{C}K$ be the Lie subalgebra and
$$V_{\widehat{\f{g}}}(\ell, 0)=U(\f{g})\otimes_{U( \widehat{\f{g}}_{(\geq 0)})}\bb{C}_{\ell}$$
be the induced $\widehat{\f{g}}$-module and $ L_{\widehat{\f{g}}}(\ell, 0)$ is the unique irreducible quotient of $ V_{\widehat{\f{g}}}(\ell, 0)$ for all $\ell \in \bb{C}$.
When $\ell$ is not the critical (i.e., $\ell$ is not negative of the dual Coexter number $h^\vee$ of $ \f{g}$), then $ V_{\widehat{\f{g}}}(\ell, 0)$ is a vertex operator algebra and $L_{\widehat{\f{g}}}(\ell, 0)$ is a simple vertex operator algebra
(cf. \cite[6.2]{LL}). We briefly recall  the constructions for later use.

The PBW theorem implies that $ V_{\widehat{\f{g}}}(\ell, 0)$ has a $\bb{C}$-linear basis
\[\{ a^{i_1}(-n_1)\cdots a^{i_r}(-n_r)\mbf{1}\; |\; 1\leq i_1\leq i_2\leq \cdots\leq i_r\leq d, \; n_1\geq 1, \cdots, n_r\geq 1\}\]
if $ \{a^1, \cdots, a^d\}$ is a basis of $ \f{g}$.
In particular,
\[ V_n=\bb{C}\on{-Span}\{ a^{i_1}(-n_1)\cdots a^{i_r}(-n_r)\mbf{1} \; |\; \sum_{i=1}^{d} n_i=n\}. \]
Hence $ V_1=\f{g}(-1)\mbf{1}=\f{g}$ and $V_2$ has a basis
\[\{ a^i(-1)a^j(-1)\mbf{1}\; |\; 1\leq i\leq j\leq d\}\cup \{a^i(-2)\mbf{1}\;|\; i=1,\cdots,d\}.\]
Hence $\dim V_2={d(d+3)/2}$.

Note that conformal vector $ \omega$ is of $ V=V_{\widehat{\f{g}}}(\ell, 0)$ is
\[ \omega=\frac{1}{2(\ell+h^\vee)}\sum_{i=1}^d u^{i}(-1)u^{i}(-1)\mbf{1} \in V_2\]
for any fixed  orthogonal normal basis $\{ u^1, \cdots, u^{d}\}$ of $ \f{g}$ with respect to the normalized Killing form $\langle \cdot, \cdot \rangle $ on $\f{g}$.

 Let $G$ be the adjoint algebraic group corresponding to $ \f{g}$. $G$ acts on $\f{g}$ as automorphism group of the Lie algebra $\f{g}$, and thus preserving the Killing form on $ \f{g}$.  Since $G$ is a simple algebraic group, this action defines an embedding of $G\subset \on{SO}_{\dim \f{g}}(\bb{C})$.  In case that $\f{g}=\f{sl}_2$,  we have $ G=\on{SO}_{3}(\bb{C})$ (both of type $\on{A}_1=\on{B}_1$). This is not true for higher rank Lie algebras. Thus $G$ also acts on $ \widehat{\f{g}}$ as Lie algebra automorphism preserving the Lie subalgebra $\widehat{\f{g}}_{\geq 0}$ and the central element $ K$. Hence $G$ acts on $\widehat{\f{g}}$-module   $V_{\widehat{\f{g}}}(\ell, 0)$ compatible with the action on $ \widehat{\f{g}}$, i.e.,
 the Lie algebra module structure
 \[ \widehat{\f{g}}\otimes V_{\widehat{\f{g}}}(\ell, 0)\rightarrow V_{\widehat{\f{g}}}(\ell, 0)\]
 is a $G$-module homomorphism. The $G$-action on $ V_{\widehat{\f{g}}}(\ell, 0)$  can be directly written by
 \[g(a^{i_1}(-n_1)\cdots a^{i_r}(-n_r)\mbf{1})=g(a^{i_1})(-n_1)\cdots g(a^{i_r})(-n_r)\mbf{1}.\]
Thus $G$ also acts on $V_{\widehat{\f{g}}}(\ell, 0)$ as automorphisms of the vertex operator algebra preserving the conformal vector $\omega$.  Since  $L_{\widehat{\f{g}}}(\ell, 0)$ is the unique $\widehat{\f{g}}$-irreducible quotient of $V_{\widehat{\f{g}}}(\ell, 0)$, $G$ also acts on $L_{\widehat{\f{g}}}(\ell, 0)$.   Let $\pi: V_{\widehat{\f{g}}}(\ell, 0)\rightarrow L_{\widehat{\f{g}}}(\ell, 0)$ be the quotient map. Thus $ \pi $ is a conformal homomorphism of vertex operator algebras, i.e., $\pi(\omega)$ is the conformal vector of $ L_{\widehat{\f{g}}}(\ell, 0)$. Then $ \pi$ is a $G$-module homomorphism.

In particular, the group $ G$ acts on 
$\on{Sc}(V_{\widehat{\f{g}}}(\ell, 0))$,  
$\on{Sc}(L_{\widehat{\f{g}}}(\ell, 0))$,  
$\on{ScAlg}(V_{\widehat{\f{g}}}(\ell, 0))$, 
and $\on{ScAlg}(L_{\widehat{\f{g}}}(\ell, 0))$  
such that  the following diagram commutes with all 
maps being $G$-equivariant
\[ \xymatrix{\on{ScAlg}(V_{\widehat{\f{g}}}(\ell, 0))\ar[r]\ar[d]&\on{ScAlg}(L_{\widehat{\f{g}}}(\ell, 0))\ar[d] \\
 \on{Sc}(V_{\widehat{\f{g}}}(\ell, 0)) \ar[r] &\on{Sc}(L_{\widehat{\f{g}}}(\ell, 0))}.
\]

We are interested in computing  $G$-orbit structures of the above varieties. 
And we will first focus on $ V=V_{\widehat{\f{g}}}(\ell, 0)$.  Note that $ V_1=\f{g}$ as a Lie algebra and $ V_n$ is a module over the Lie algebra $V_1$ under the action $a\cdot v=a_{0}(v)$ for all $v\in V_n$.
Recall that $ a(n)=a\otimes t^n \in \widehat{\f{g}}$.
We will repeatedly use the following formulas \cite[(6.2.67)]{LL}
\begin{equation} a(n)b(m)=b(m)a(n)+[a,b](n+m) +n\delta_{n+m, 0}\< a,b\> K; \end{equation} 
\begin{equation}\label{Ln:comm}
 [L(n), \beta(m)]=-m\beta(m+n)  ,\;  \text{ for all }\beta\in \f{g}, \; \text{ for all } m, n \in \bb{Z}.
\end{equation}

Since  $[L(n), a(0)]=0$ for all $ a\in \f{g}$, each $ L(n): V_m\rightarrow V_{m-n}$ is a $ \f{g}$-module homomorphism.
And since $ L(1)a(-2)\mbf{1}=2a(-1)\mbf{1}$. Thus $ L(1): V_2\rightarrow V_1$ is a surjective $\f{g}$-module homomorphism.
Thus in $V_2$, the vector subspace spanned by $ \{ u^i(-2)\mbf{1} \;|\; 1\leq i \leq \dim \f{g} \}$ as a $\f{g}$-module is isomorphic to $\f{g}$ and the quotient of $ V_{2}$ by this submodule is a $\f{g}$-module isomorphic to the second symmetric power $ \on{Sym}^2(\f{g})$. Since $ V_2$ is finite dimensional, thus
we have $ V_2$ isomorphic to $ \f{g}\oplus  \on{Sym}^2(\f{g})$ as a 
$\f{g}$-module  and $\on{Ker}L(1)\cap V_2$ is a $ \f{g}$-submodule. 
Thus the set of all semi-conformal vectors are solutions of the equaltion 
$L'(0)\omega'=2\omega'$.  

Choose an orthonormal basis $ \{u^1, \cdots, u^d\}$ of $ \f{g}$ with respect   to  the Killing form and we define the structure constants $R=(\gamma_{ij}^{s})$
 with respect to this basis by
\[ [u^i, u^j]=\sum_{s=1}^d \gamma_{ij}^{s}u^s.\]
Then for each fixed $s$, $R^s$ is a skew symmetric matrix.
We will use the following conventions
\begin{itemize}
\item[(1)]  For each pair $(j, s) $,  $R^s_{*j}=[\gamma^s_{1j}, \gamma^s_{2j},\cdots, \gamma^s_{dj}]$ is a {\em row vector};

\item[(2)] For each pair $(j, k) $,  $R_{k*}^{j}=[\gamma_{k1}^j,\gamma_{k2}^j, \cdots, \gamma_{kd}^j]^{tr}$ is a {\em column vector};

\item[(3)] For each $m$,  $R^m=(\gamma_{ij}^m)$ is a {\em matrix} and
 $R_{m*}^{*}=(\gamma_{mi}^{j})$ is a {\em matrix} with the $(i,j)$ entry being $  \gamma_{mi}^{j}$;

\item[(4)] For a matrix $M$, we denoted the $j-$th column by $M_j$ and the $j-$th row by $M^j$ and $M_{ij}$ for the $(i,j)$-entry of $M$;

\item[(5)] Let $A$ be the  matrix with the $(i,j)$ entry being $a_{ij}$. For each $i$, $AR^i$ is a matrix with the $(r,s)$ entry being $(AR^i)_{rs}=\sum\limits_{h=1}^da_{rh}\gamma_{hs}^i$;

\item[(6)] Let $A$ be the matrix with the $(i,j)$ entry being $a_{ij}$. We denote a matrix with the $(i,j)$ entry being $\sum\limits_{r,s=1}^d(AR^i)_{rs}(AR^j)_{sr}$ by  $\left(\sum\limits_{r,s=1}^d(AR^i)_{rs}(AR^j)_{sr}\right)$;

\item[(7)] Let $A$ be the matrix with the $(i,j)$ entry being $a_{ij}$. We denote the matrix with the $(i,j)$ entry being $\sum\limits_{r,s=1}^d(AR^s)_{ir}(AR^j)_{rs}$ by $\left(\sum\limits_{r,s=1}^d(AR^s)_{ir}(AR^j)_{rs}\right)$.

\end{itemize}



We use  $\mbf{u}(-1)=[u^1(-1), \cdots, u^d(-1)]$ to denote an ordered basis of $ \f{g}\otimes t^{-1}\subseteq \widehat{\f{g}}$.  In the following proposition, we give an explicit description of the  $ \f{g}$-submodule $  \on{Ker}L(1)\bigcap  V_{\widehat{\f{g}}}(\ell,0)_2$.
\begin{prop}\label{p3.1} Elements
$\omega'\in \on{Ker}L(1)\bigcap  V_{\widehat{\f{g}}}(\ell,0)_2 $  correspondes to   a unique symmetric $ d\times d$-matrix $A$ such that $\omega'=\mbf{u}(-1)A\mbf{u}(-1)^{tr}\mathbf{1}$. Or equivalently,  $\on{Ker}L(1)\bigcap  V_{\widehat{\f{g}}}(\ell,0)_2 $ isomomorphic to the space of all quadratic forms on $\f{g}^*$ as $G$-modules.
\end{prop}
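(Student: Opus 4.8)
The plan is to build an explicit linear isomorphism of $V_2$ with $\on{Sym}^2(\f{g})\oplus\f{g}$ and then to locate $\on{Ker}L(1)\cap V_2$ inside it. First I would introduce the map
$$\Phi:\on{Sym}^2(\f{g})\oplus\f{g}\longrightarrow V_2,\qquad (A,\mbf{b})\longmapsto \mbf{u}(-1)A\mbf{u}(-1)^{tr}\mbf{1}+\sum_{k=1}^{d}b_k\,u^k(-2)\mbf{1},$$
with $A$ running over symmetric $d\times d$ matrices. Both sides have dimension $\binom{d+1}{2}+d=d(d+3)/2$, so it is enough to prove $\Phi$ is surjective. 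The vectors $u^k(-2)\mbf{1}$ clearly lie in the image, and $u^i(-1)u^i(-1)\mbf{1}=\mbf{u}(-1)E_{ii}\mbf{u}(-1)^{tr}\mbf{1}$ (with $E_{ij}$ the elementary matrix with a $1$ in position $(i,j)$); for $i<j$ the commutator rule (3.1) at $n=m=-1$ gives $u^j(-1)u^i(-1)\mbf{1}=u^i(-1)u^j(-1)\mbf{1}-[u^i,u^j](-2)\mbf{1}$, hence
$$u^i(-1)u^j(-1)\mbf{1}=\mbf{u}(-1)\bigl(\tfrac12(E_{ij}+E_{ji})\bigr)\mbf{u}(-1)^{tr}\mbf{1}+\tfrac12\sum_{s=1}^{d}\gamma_{ij}^{s}\,u^s(-2)\mbf{1}\in\im\Phi.$$
Since these vectors together with the $u^k(-2)\mbf{1}$ form the displayed basis of $V_2$, $\Phi$ is onto, hence an isomorphism; in particular every $\omega'\in V_2$ is $\Phi(A,\mbf{b})$ for a \emph{unique} symmetric $A$ and a unique $\mbf{b}$.

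Next I would evaluate $L(1)$ on $\im\Phi$. From (3.2), $[L(1),\beta(-1)]=\beta(0)$ and $[L(1),\beta(-2)]=2\beta(-1)$; combining these with $\beta(0)\mbf{1}=0$ and $u^i(0)u^j(-1)\mbf{1}=[u^i,u^j](-1)\mbf{1}$ yields $L(1)\bigl(u^i(-1)u^j(-1)\mbf{1}\bigr)=[u^i,u^j]$ and $L(1)\bigl(u^k(-2)\mbf{1}\bigr)=2u^k$. Therefore
$$L(1)\,\Phi(A,\mbf{b})=\sum_{i,j}a_{ij}[u^i,u^j]+2\sum_{k}b_k u^k=2\sum_{k}b_k u^k,$$
the double sum vanishing because $a_{ij}=a_{ji}$ while $\gamma_{ij}^{s}=-\gamma_{ji}^{s}$. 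Hence $\Phi(A,\mbf{b})\in\on{Ker}L(1)$ exactly when $\mbf{b}=0$, so $\on{Ker}L(1)\cap V_2=\{\mbf{u}(-1)A\mbf{u}(-1)^{tr}\mbf{1}\mid A=A^{tr}\}$ with $A$ uniquely determined by $\omega'$; this is the first assertion.

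For the $G$-module statement I would use the explicit action $g\cdot(a^{i_1}(-n_1)\cdots\mbf{1})=(ga^{i_1})(-n_1)\cdots\mbf{1}$ recalled before the proposition. Writing $gu^i=\sum_{k}g_{ki}u^k$ and expanding, $g\cdot\bigl(\mbf{u}(-1)A\mbf{u}(-1)^{tr}\mbf{1}\bigr)=\mbf{u}(-1)\,(gAg^{tr})\,\mbf{u}(-1)^{tr}\mbf{1}$, so the isomorphism $\on{Ker}L(1)\cap V_2\cong\{\text{symmetric }A\}$ intertwines the $G$-action with $A\mapsto gAg^{tr}$. The fixed orthonormal basis of $\f{g}$ identifies $\f{g}$ and $\f{g}^{*}$ with $\C^{d}$ compatibly with $G$ (because $G$ preserves the Killing form), and under this a symmetric matrix $A$ is the same datum as the quadratic form $\xi\mapsto\xi^{tr}A\xi$ on $\f{g}^{*}$, the action $A\mapsto gAg^{tr}$ being exactly the induced $G$-action on quadratic forms; this gives the stated isomorphism of $G$-modules. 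I do not anticipate a genuine obstacle: the whole argument runs on the two commutator identities (3.1)--(3.2) plus a dimension count, and the only places demanding care are keeping the sign and the factor $\tfrac12$ right in the surjectivity step (so that the splitting $V_2\cong\on{Sym}^2(\f{g})\oplus\f{g}$ matches the PBW basis) and tracking the transpose in the $G$-action so that it matches the standard action on quadratic forms and not its contragredient.
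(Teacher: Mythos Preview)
Your proof is correct and follows essentially the same route as the paper: both arguments write a general element of $V_2$ as a symmetric quadratic expression in the $u^i(-1)$ plus a $\sum b_k u^k(-2)\mbf{1}$ term, apply the commutator identities \eqref{Ln:comm} to compute $L(1)$, observe that the symmetric part kills the bracket contribution so that $L(1)=0$ forces $\mbf{b}=0$, and finish with a dimension count using surjectivity of $L(1):V_2\to V_1$. Your version packages this a bit more cleanly by first establishing the isomorphism $\Phi:\on{Sym}^2(\f{g})\oplus\f{g}\to V_2$ (which in particular secures uniqueness of $A$), and you also spell out the $G$-equivariance $A\mapsto gAg^{tr}$ and the identification with quadratic forms on $\f{g}^*$ more explicitly than the paper does; but the underlying argument is the same.
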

\begin{proof}
We can write $\omega'\in V_{\widehat{\f{g}}}(\ell,0)_2$ as
\begin{equation} \omega'=\sum_{i,j=1}^{d}a_{ij}u^i(-1)u^{j}(-1)\mathbf{1}+\sum_{s=1}^{d}b_su^s(-2)\mathbf{1}.\end{equation}
Using $L(1)\mathbf{1}=u^{i}(0)\mathbf{1}=0$, we have 
$$\begin{array}{llllllllllll}
L(1)\omega'&=\sum\limits_{i,j=1}^{n}a_{ij}L(1)u^i(-1)u^j(-1)\mathbf{1}
+\sum\limits_{s=1}^{n}b_sL(1)u^s(-2)\mathbf{1}\\
&=\sum\limits_{i,j=1}^{n}a_{ij}[L(1),u^i(-1)]u^j(-1)\mathbf{1}+\sum\limits_{i,j=1}^{n}a_{ij}u^i(-1)[L(1),u^j(-1)]\mathbf{1}
+\sum\limits_{s=1}^{n}b_s[L(1),u^s(-2)]\mathbf{1}\\
&=\sum\limits_{i,j=1}^{n}a_{ij}u^i(0)u^j(-1)\mathbf{1}+\sum\limits_{i,j=1}^{n}a_{ij}u^i(-1)u^j(0)\mathbf{1}+2\sum\limits_{s=1}^{n}b_su^s(-1)\mathbf{1}\\
&=\sum\limits_{i,j=1}^{n}a_{ij}[u^i(0),u^j(-1)]\mathbf{1}+\sum\limits_{i,j=1}^{n}a_{ij}u^j(-1)u^i(0)\mathbf{1}+2\sum\limits_{s=1}^{n}b_su^s(-1)\mathbf{1}\\
&=\sum\limits_{i,j=1}^{n}a_{ij}[u^i(0),u^j(-1)]\mathbf{1}+2\sum\limits_{s=1}^{n}b_su^s(-1)\mathbf{1}\\
&=\sum\limits_{i,j,l}^{n}a_{ij}\gamma_{ij}^lu^{l}(-1)\mathbf{1}
+2\sum\limits_{s=1}^{n}b_su^s(-1)\mathbf{1}\\
&=\sum\limits_{l=1}^{n}(\sum\limits_{i,j}^{n}a_{ij}\gamma_{ij}^l+2b_l)u^{l}(-1)\mathbf{1}\\
&=0.
\end{array}
$$
This implies $\sum\limits_{i,j=1}^{d}a_{ij}\gamma_{ij}^l+2b_l=0.$ Noting that $ \gamma_{ii}^l=0$ and $\gamma_{ij}^{l}=-\gamma_{ji}^{l}$, we have
$$
\begin{array}{llllll}
b_l&=-\frac{1}{2}\sum\limits_{i,j=1}^{d}a_{ij}\gamma_{ij}^l
=-\frac{1}{2}(\sum\limits_{1\leq i<j\leq d}a_{ij}\gamma_{ij}^l+
\sum\limits_{d\geqq i>j\geqq 1}^{n}a_{ij}\gamma_{ij}^l)\\
&=-\frac{1}{2}\sum\limits_{1\leq i<j\leq d}(a_{ij}-a_{ji})\gamma_{ij}^l.
\end{array}
$$
If the matrix $ A=(a_{ij})$ is symmetric, then $ \mbf{u}(-1)A\mbf{u}(-1)^{tr}\mbf{1} \in \ker(L(1))$.

The assignment $ A \mapsto \mbf{u}(-1)A\mbf{u}(-1)^{tr}\mbf{1} \in \ker(L(1))$ defines a linear map from the space of  quadratic forms $\on{Sym}^2(\f{g})$ to $\ker(L(1))\cap V_2$, which is injective. Comparing their dimensions give an isomorphism. Note that $L(1):V_2\rightarrow V_1\cong \f{g}(-1)\mathbf{1}$ is surjective.

 Note that $\on{Sym}^2(\f{g})=\bb{C}[\f{g}^*]_{2}$  is the space of all degree two symmetric polynomial functions on the dual vector space $ \f{g}^*$.
\end{proof}
 From now on we can write elements in $ \ker(L(1))\cap V_2$ in terms of
 $\omega'=\mbf{u}(-1)A\mbf{u}(-1)^{tr}\mbf{1}$ with $A$ being a symmetric matrix
 and write
 $Y(\omega', z)=\sum\limits_{n\in \Z}L'(n)z^{-n-2}$.  Similar to the computation  of  Proposition 3.1, we can obtain the following equations in Proposition 3.2--3.4.
 \begin{prop}\label{p3.2} For  $\omega'=\mbf{u}(-1)A\mbf{u}(-1)^{tr}\mathbf{1}$, $L'(1)\omega'=0$ if and only if
\begin{equation}\label{e3.4}
\sum_{s=1}^{d}\sum_{j=1}^{d}\left(R_{*j}^s\left[\sum_{l=1}^{d}(AR^lA)R_{l*}^j+\ell(A^2)_{j}\right]\right)u^s(-1)\mbf{1}=0.
\end{equation}
 \end{prop}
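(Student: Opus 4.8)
The plan is to compute $L'(1)\omega'$ directly, mimicking the calculation in the proof of Proposition \ref{p3.1} but now using $Y(\omega',z)=\sum_{n}L'(n)z^{-n-2}$ with $\omega'=\mbf{u}(-1)A\mbf{u}(-1)^{tr}\mbf{1}$ a genuine quadratic expression rather than a single mode. Concretely, $L'(1)=\omega'_2$, so I would start from $L'(1)\omega'=\omega'_2\omega'$ and expand $\omega'_2$ acting on the quadratic vector $\sum_{p,q}a_{pq}u^p(-1)u^q(-1)\mbf{1}$. Since $\omega'\in \ker L(1)\cap V_2$ by the hypothesis we need only one residue computation: write $\omega'_2(v)=\Res_z z^3 Y(\omega',z)v$ and use the standard iterate/commutator formulas together with (\ref{Ln:comm}) and the affine bracket relation (\ref{e3.1}) (the displayed formula $a(n)b(m)=b(m)a(n)+[a,b](n+m)+n\delta_{n+m,0}\<a,b\>K$). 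Because $\omega'$ has conformal weight $2$, the result $L'(1)\omega'$ lands in $V_1=\f{g}(-1)\mbf{1}$, so it suffices to track the coefficient of each $u^s(-1)\mbf{1}$.

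The key steps in order: (1) expand $Y(\omega',z)=\frac12\mbf{u}(-1)A\mbf{u}(-1)^{tr}$ as a normally ordered product of the currents $u^i(z)=\sum_n u^i(n)z^{-n-1}$, obtaining $Y(\omega',z)=\sum_{i,j}a_{ij}\,{:}u^i(z)u^j(z){:}\,$; (2) apply this to the target vector $\omega'=\sum_{p,q}a_{pq}u^p(-1)u^q(-1)\mbf{1}$ and extract $\Res_z z^3$; (3) carry out the contractions: each contraction of a current mode against a $u^p(-1)$ produces either a central term $\ell\<u^i,u^p\>=\ell\,\delta_{ip}$ (giving the $\ell(A^2)_j$ piece) or a structure-constant term $\gamma$ (giving the cubic $AR^lA$ piece after resumming over the intermediate index); (4) collect everything into the coefficient of $u^s(-1)\mbf{1}$ and recognize the double sum over $s,j$ of $R^s_{*j}\big[\sum_l (AR^lA)R^j_{l*}+\ell(A^2)_j\big]$, using the row/column conventions (1)--(2) and the product conventions (5) set up before the proposition. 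Setting this to zero is exactly (\ref{e3.4}), and since $\{u^s(-1)\mbf{1}\}$ is a basis the vanishing of the whole vector is equivalent to the vanishing of each coefficient — but the proposition states it as a single vector equation, which is cleaner, so I would leave it in that form.

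The main obstacle is purely bookkeeping: there are three "kinds" of terms generated by the residue (two modes of $Y(\omega',z)$ hitting the two modes of $\omega'$ in the four possible pairings), and one must be careful that (a) the normal ordering in $Y(\omega',z)$ interacts correctly with the weight-grading so that only finitely many terms survive the $z^3$ residue, and (b) the symmetry of $A$ is used to merge mirror-image terms (this is where the clean factor $AR^lA$ emerges rather than a less symmetric expression). A secondary subtlety is that the weight-one piece $\sum b_s u^s(-2)\mbf{1}$ of a general $V_2$ element does not appear here precisely because we have already imposed $\omega'\in\ker L(1)$ in Proposition \ref{p3.1}, so $b_s$ is determined by $A$ and, for the computation of $L'(1)\omega'$, the $u^s(-2)\mbf{1}$ contributions either cancel or are absorbed; I would note this explicitly rather than silently dropping them. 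Once the contractions are organized by which index is contracted against which, assembling the stated formula is mechanical, so I would present the setup and the three term-types in detail and then state that collecting coefficients yields (\ref{e3.4}), referring back to conventions (1),(2),(5) for the matrix notation.
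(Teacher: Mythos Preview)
Your plan is correct and is essentially what the paper has in mind: the paper does not spell out a proof of this proposition but simply says that the equations in Propositions \ref{p3.2}--\ref{p3.4} are obtained ``similar to the computation of Proposition \ref{p3.1}'', i.e.\ by a direct mode expansion of $L'(1)\omega'=\omega'_2\omega'$ using the normal-ordered form $Y(\omega',z)=\sum_{i,j}a_{ij}\,{:}u^i(z)u^j(z){:}$ together with the affine commutation relation (3.1). Your outline of the four pairings, with central contractions producing the $\ell(A^2)_j$ term and structure-constant contractions producing the $AR^lA$ term after using $A=A^{tr}$, is exactly the intended computation.

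Two small slips to correct before you write it up. First, the residue should be $\Res_z z^{2}$, not $z^{3}$: with $Y(v,z)=\sum_n v_n z^{-n-1}$ one has $\omega'_2=\Res_z z^{2}Y(\omega',z)$, and $\omega'_2=L'(1)$. Second, your ``secondary subtlety'' is a non-issue: the proof of Proposition \ref{p3.1} shows $b_s=-\tfrac12\sum_{i<j}(a_{ij}-a_{ji})\gamma_{ij}^s$, which vanishes for symmetric $A$, so $\omega'=\mbf{u}(-1)A\mbf{u}(-1)^{tr}\mbf{1}$ literally has no $u^s(-2)\mbf{1}$ component and there is nothing to absorb or cancel.
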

\begin{prop}\label{p3.3}
The  vector $\omega'=\mbf{u}(-1)A\mbf{u}(-1)^{tr}\mathbf{1}$ satisfies $L'(0)\omega'=2\omega'$ if and only if
\begin{equation}\label{e3.5}
2\ell A^2+\sum\limits_{m=1}^{d}AR^{m}AR_{m*}^{*}+\left(\sum_{r,s=1}^d(AR^i)_{rs}(AR^j)_{sr})\right)
+\left(\sum_{r,s=1}^{d}(AR^s)_{ir}(AR^j)_{sr})\right)=A,
\end{equation} and 
\begin{equation}\label{e3.6}
\ell A(\sum\limits_{i=1}^{d}R^iA_{i})=0.
\end{equation}
\end{prop}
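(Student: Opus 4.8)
The plan is to compute $L'(0)\omega'$ directly from the definition $Y(\omega',z)=\sum_n L'(n)z^{-n-2}$, in exactly the same spirit as the computation of $L(1)\omega'$ in Proposition~\ref{p3.1}, and then to sort the resulting vector in $V_2$ into its two homogeneous pieces under the $\f{g}$-module decomposition $V_2\cong \f{g}\oplus\on{Sym}^2(\f{g})$ established above. Concretely, write $\omega'=\mbf{u}(-1)A\mbf{u}(-1)^{tr}\mbf{1}=\sum_{i,j}a_{ij}u^i(-1)u^j(-1)\mbf{1}$, so that $L'(0)=\omega'_1$. First I would expand $\omega'_1\omega'=\big(\sum_{i,j}a_{ij}(u^i(-1)u^j(-1)\mbf{1})\big)_1\,\omega'$ using the iterate (associativity) formula for vertex operators together with the bracket relations (3.1): $a(n)b(m)=b(m)a(n)+[a,b](n+m)+n\delta_{n+m,0}\<a,b\>K$, and the fact that $K$ acts as the scalar $\ell$. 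The vector $\omega'_1\omega'$ again lies in $V_2$, hence can be written as $\mbf{u}(-1)B\mbf{u}(-1)^{tr}\mbf{1}+\sum_s c_s u^s(-2)\mbf{1}$ for some symmetric matrix $B$ and scalars $c_s$ depending (quadratically, plus an $\ell$-linear term) on the entries of $A$ and the structure constants $\gamma_{ij}^s$.

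Next I would match coefficients on the two sides of $L'(0)\omega'=2\omega'$. Comparing the $\on{Sym}^2(\f{g})$-components (the $u^i(-1)u^j(-1)\mbf{1}$ terms) gives one matrix identity; bookkeeping of which bracketings contribute shows that the $2\ell A^2$ term comes from the central/$K$-contractions, the $\sum_m AR^mAR^*_{m*}$ term from a single bracket being resolved, and the two bracketed sums $\left(\sum_{r,s}(AR^i)_{rs}(AR^j)_{sr}\right)$ and $\left(\sum_{r,s}(AR^s)_{ir}(AR^j)_{sr}\right)$ from the double-bracket terms, matched against $2\omega'$ which rescales to $A$ after halving — this is equation~(\ref{e3.5}). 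Comparing the $\f{g}$-components (the $u^s(-2)\mbf{1}$ terms), one notes that $\omega'$ as written has no $u^s(-2)\mbf{1}$ component since $A$ is symmetric (by the formula for $b_l$ in the proof of Proposition~\ref{p3.1}), so the $u^s(-2)\mbf{1}$-coefficient of $L'(0)\omega'$ must vanish identically; isolating the terms that carry a factor of $K=\ell$ in this component yields precisely equation~(\ref{e3.6}), $\ell A\big(\sum_i R^i A_i\big)=0$, while the non-central part of that component vanishes automatically because $L(0)$ preserves the grading and $u^s(-2)\mbf{1}$ sits in the copy of $\f{g}\subseteq V_2$ on which the symmetric quadratic form has no support. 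One must also invoke the general fact, cited from \cite[Proposition~2.2]{CL} and the CFT-type remark in Section~2, that the conformal weight condition $L'(0)\omega'=2\omega'$ together with $L'(1)\omega'=0$ is exactly what must be checked.

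The main obstacle I anticipate is the organization of the iterate computation: expanding $\omega'_1\omega'$ produces a sizeable sum of double products $u^i(-1)u^j(-1)u^k(-1)u^l(-1)\mbf{1}$ with various shifts, and one must carefully and repeatedly apply (3.1) to reduce everything to the normal-ordered basis of $V_2$, keeping track of three sources of terms (the two single-contraction patterns and the central term). The index gymnastics needed to recognize the reduced expression as the four matrix quantities appearing in (\ref{e3.5}) — in particular distinguishing $\left(\sum_{r,s}(AR^i)_{rs}(AR^j)_{sr}\right)$ from $\left(\sum_{r,s}(AR^s)_{ir}(AR^j)_{sr}\right)$, which differ only in which index is summed against a structure constant — is where errors are most likely; the conventions (5)–(7) introduced before Proposition~\ref{p3.1} are designed precisely to package this, so I would lean on them heavily. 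The skew-symmetry $\gamma_{ij}^s=-\gamma_{ji}^s$ and $\gamma_{ii}^s=0$, plus the symmetry of $A$, will be used throughout to collapse pairs of terms and to see that the $u^s(-2)\mbf{1}$-component splits cleanly into a vanishing non-central part and the single relation (\ref{e3.6}).
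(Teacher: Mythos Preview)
Your proposal is correct and follows essentially the same approach as the paper, which simply states that ``Similar to the computation of Proposition~3.1, we can obtain the following equations in Proposition~3.2--3.4'' and omits the details. Your outline --- expand $\omega'_1\omega'$ via the iterate formula and the bracket relations (3.1), normal-order into the basis of $V_2$, and compare the $\on{Sym}^2(\f{g})$- and $\f{g}$-components against $2\omega'$ --- is exactly this computation; the only place to be careful is your claim that the non-central contribution to the $u^s(-2)\mbf{1}$-coefficient ``vanishes automatically,'' which is not a consequence of $L(0)$ preserving the grading but rather follows from the symmetry of $A$ combined with the skew-symmetry $\gamma_{ij}^s=-\gamma_{ji}^s$, so you should verify that cancellation explicitly when carrying out the bookkeeping.
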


\begin{prop}\label{p3.4}
The  vector $\omega'=\mbf{u}(-1)A\mbf{u}(-1)^{tr}\mathbf{1}$ satisfies $L(2)\omega'=L'(2)\omega'=\frac{c'}{2}\mathbf{1}$ if and only if the symmetric $A=(a_{ij})$
satisfies the following equations
\begin{equation}\label{e3.7}
c'=2\ell tr(A);
\end{equation}
\begin{equation}\label{e3.8}\ell tr(A)=2\ell^2  tr(A^2) +2\ell \sum\limits_{i,j=1}^{d}((AR)^j(AR)^i)_{(i,j)}-\ell \sum\limits_{i=1}^{d}tr(A^2(R^i)^2).
\end{equation}
 \end{prop}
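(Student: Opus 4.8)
The plan is to compute $L(2)\omega'$ and $L'(2)\omega'$ separately and compare the two resulting scalars. Since $\omega'\in V_2$ and both $L(2)$ and $L'(2)$ lower conformal weight by $2$, each of $L(2)\omega'$ and $L'(2)\omega'$ lies in $V_0=\C\mathbf 1$ and is therefore automatically a scalar multiple of $\mathbf 1$; the content of the statement is to identify those two scalars. For the ambient Virasoro I would write $\omega'=\sum_{i,j}a_{ij}u^i(-1)u^j(-1)\mathbf 1$ and use $[L(2),\beta(-1)]=\beta(1)$ (the case $n=2,\ m=-1$ of \eqref{Ln:comm}), together with $\beta(1)\mathbf 1=0$ and $L(2)\mathbf 1=0$, to reduce $L(2)\omega'$ to $\sum_{i,j}a_{ij}u^i(1)u^j(-1)\mathbf 1$; the commutation relation $a(n)b(m)=b(m)a(n)+[a,b](n+m)+n\,\delta_{n+m,0}\<a,b\>K$ then gives $u^i(1)u^j(-1)\mathbf 1=\ell\,\delta_{ij}\mathbf 1$ (using $\<u^i,u^j\>=\delta_{ij}$), so that $L(2)\omega'=\ell\,tr(A)\,\mathbf 1$. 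Hence $L(2)\omega'=\frac{c'}{2}\mathbf 1$ is exactly the first equation \eqref{e3.7}, $c'=2\ell\,tr(A)$.

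The substantial part is the computation of $L'(2)\omega'$. From $\omega'=\mbf{u}(-1)A\mbf{u}(-1)^{tr}\mathbf 1$ and the normal‑ordering formula $Y(a(-1)b(-1)\mathbf 1,z)=\,:Y(a,z)Y(b,z):$ for $a,b\in V_1=\f{g}$, one has $Y(\omega',z)=\sum_{i,j}a_{ij}:u^i(z)u^j(z):$ with $u^i(z)=\sum_m u^i(m)z^{-m-1}$, and extracting the coefficient of $z^{-n-2}$ gives $L'(n)=\sum_{i,j}a_{ij}\big(\sum_{k\le -1}u^i(k)u^j(n-k)+\sum_{k\ge 0}u^j(n-k)u^i(k)\big)$. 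For $n=2$, a weight count shows that only the summands $u^j(2)u^i(0)$ and $u^j(1)u^i(1)$ act nontrivially on $V_2$, so $L'(2)\omega'=\sum_{i,j,p,q}a_{ij}a_{pq}\big(u^j(2)u^i(0)+u^j(1)u^i(1)\big)u^p(-1)u^q(-1)\mathbf 1$. I would then push all annihilation modes $u(m)$, $m\ge1$, onto $\mathbf 1$ by repeated use of the commutation relation, retaining only the $\mathbf 1$-component. The surviving scalars fall into two families: those using the central term $n\,\delta_{n+m,0}\<u^i,u^j\>K$ twice and no bracket, which (by the symmetry of $A$) sum to $2\ell^{2}tr(A^{2})$; and those using the brackets $[u^i,u^j](0)=\sum_s\gamma_{ij}^s u^s(0)$ twice together with a single central contraction, which, after rewriting via the skew-symmetry of each $R^s$ and the $\f{g}$-invariance of $\<\cdot,\cdot\>$, sum to $2\ell\sum_{i,j}\big((AR)^j(AR)^i\big)_{(i,j)}-\ell\sum_i tr\big(A^{2}(R^i)^{2}\big)$. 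Thus $L'(2)\omega'=\big(2\ell^{2}tr(A^{2})+2\ell\sum_{i,j}((AR)^j(AR)^i)_{(i,j)}-\ell\sum_i tr(A^{2}(R^i)^{2})\big)\mathbf 1$, and comparison with $L(2)\omega'=\ell\,tr(A)\mathbf 1$ shows that $L'(2)\omega'=L(2)\omega'$ is exactly \eqref{e3.8}. Together with the previous paragraph, $L(2)\omega'=L'(2)\omega'=\frac{c'}{2}\mathbf 1$ holds if and only if both \eqref{e3.7} and \eqref{e3.8} hold.

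The main obstacle is the bookkeeping hidden in the second family above: one must enumerate every way the nested modes contract to a multiple of $\mathbf 1$, keep track of all signs and multiplicities, and then collapse the resulting triple sums over the $\gamma_{ij}^s$ and the $a_{ij}$ into the compact matrix expressions appearing in \eqref{e3.8}, using the skew-symmetry of each $R^s$ and the invariance identities among the $\gamma_{ij}^s$ coming from the $\f{g}$-invariance of $\<\cdot,\cdot\>$ (equivalently, the Jacobi identity) --- the same mechanism already used in the computations behind Propositions~\ref{p3.2} and~\ref{p3.3}. One should also check that the non-scalar intermediate terms, which live in $V_1\oplus V_2$, genuinely do not contribute, since only the $V_0$-component is being extracted.
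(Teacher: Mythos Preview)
Your proposal is correct and follows precisely the approach the paper itself indicates: the paper does not spell out a proof of this proposition but simply says that ``similar to the computation of Proposition~\ref{p3.1}, we can obtain the following equations in Proposition~3.2--3.4,'' i.e., a direct mode-by-mode computation using \eqref{Ln:comm} and the affine commutation relations. Your outline carries this out explicitly---computing $L(2)\omega'$ via $[L(2),\beta(-1)]=\beta(1)$ to get $\ell\,tr(A)\,\mathbf 1$, and computing $L'(2)\omega'$ from the normal-ordered expansion of $Y(\omega',z)$---which is exactly what the paper has in mind; there is no alternative argument to compare.
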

 Therefore we have
\begin{theorem}\label{t3.5} A vector $\omega'=\mbf{u}(-1)A\mbf{u}(-1)^{tr}\mathbf{1}\in \on{Sc}(V_{\widehat{\f{g}}}(\ell,0),\omega)$ if and only if the symmetric matrix $A=(a_{ij})$ satisfies equations
(\ref{e3.4})--(\ref{e3.8}).\end{theorem}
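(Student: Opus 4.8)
The plan is to assemble Theorem~\ref{t3.5} directly from the chain of characterizations already established in Propositions~\ref{p3.1}--\ref{p3.4}, treating it essentially as a bookkeeping of the defining conditions for a semi-conformal vector. Recall from the remark in Section~2 that for a CFT-type vertex operator algebra $V$, a vector $\omega'\in V_2$ is semi-conformal if and only if $\omega'\in\ker(L(1))$ and $L'(0)\omega'=2\omega'$; equivalently, via \cite[Proposition 2.2]{CL}, $\omega'$ is semi-conformal precisely when $\omega'_n\omega'=\omega_n\omega'$ for all $n\geq 0$, which unravels into $\omega'_0\omega'=\omega_0\omega'$, $\omega'_1\omega'=2\omega'$, $\omega'_2\omega'=\frac{c'}{2}\mathbf 1$, and $\omega'_n\omega'=0$ for $n\geq 3$, together with $L'(1)\omega'=0$ encoding compatibility of the bracket with $L(1)$. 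The first step is therefore to reduce "semi-conformal" to exactly these finitely many equations on $\omega'$, citing the relevant statements in Section~2.

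The second step is to substitute $\omega'=\mathbf u(-1)A\mathbf u(-1)^{tr}\mathbf 1$ with $A$ symmetric, which by Proposition~\ref{p3.1} is the general form of an element of $\ker(L(1))\cap V_2$; so the condition $\omega'\in\ker(L(1))\cap V_2$ is automatically absorbed into the ansatz. Then I would match each remaining condition with the corresponding proposition: $L'(1)\omega'=0$ is equation~(\ref{e3.4}) by Proposition~\ref{p3.2}; $L'(0)\omega'=2\omega'$ is the pair of equations~(\ref{e3.5}) and~(\ref{e3.6}) by Proposition~\ref{p3.3}; and $L'(2)\omega'=\frac{c'}{2}\mathbf 1$ is equations~(\ref{e3.7}) and~(\ref{e3.8}) by Proposition~\ref{p3.4}. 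The one point that needs a sentence of care is the vanishing $L'(n)\omega'=0$ for $n\geq 3$: here one observes that $L'(n)\omega'\in V_{2-n}=0$ for $n\geq 3$ since $V$ is $\mathbb Z_{\geq 0}$-graded with $V_0=\mathbb C\mathbf 1$ and $V_{<0}=0$, so these conditions are vacuous and impose nothing. Similarly one should note $L'(0)\omega'=2\omega'$ already forces $\omega'$ to sit in $V_2$, consistent with the ansatz.

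Finally I would combine: $\omega'\in\on{Sc}(V_{\widehat{\f g}}(\ell,0),\omega)$ iff $A$ is symmetric and all of (\ref{e3.4})--(\ref{e3.8}) hold, which is the assertion. The main obstacle is not in this last theorem at all — the genuine computational labor lives in Propositions~\ref{p3.2}--\ref{p3.4}, where one pushes the commutator formulas~(\ref{e3.1}) and~(\ref{Ln:comm}) through the quadratic expression for $\omega'$ using the structure constants $R^s$; here one only needs to verify that the list of conditions is complete and correctly identified. The only subtlety worth flagging explicitly in the write-up is that the constant $c'$ in (\ref{e3.7}) is not an extra unknown to be solved but is \emph{determined} by $A$ via $c'=2\ell\,\on{tr}(A)$, so that (\ref{e3.7}) should be read as defining $c'$ while (\ref{e3.8}) is the actual constraint coming from $L'(2)\omega'=\frac{c'}{2}\mathbf 1$; I would make sure the statement of the theorem and its proof are consistent on this reading.
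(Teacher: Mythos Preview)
Your proposal is correct and follows exactly the paper's approach: the theorem is stated immediately after Propositions~\ref{p3.1}--\ref{p3.4} with only the words ``Therefore we have'' and no further proof, so assembling those propositions is precisely what is intended. Your additional remarks on the automatic vanishing of $L'(n)\omega'$ for $n\geq 3$ by weight reasons and on the reading of $c'$ in~(\ref{e3.7}) as a definition rather than a constraint are helpful clarifications that the paper leaves implicit.
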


\begin{remark}\label{r3.6} If $\omega'\neq 0$ is in $ \on{Sc}(V_{\widehat{\f{g}}}(\ell,0),\omega)$, then for $ \alpha\in \bb{C}$, $ \alpha\omega' \in  \on{Sc}(V_{\widehat{\f{g}}}(\ell,0),\omega)$ if and only if $\alpha =0, 1$. This follows from   \eqref{e3.5}.
\begin{remark} Here, we can  generalize to more general  cases of vertex operator algebra     $(V,\omega)$.  Let $(V,\omega)$ be a CFT-type vertex operator algebra (\cite{DLMM,DM}) with the assumption that  $V_2$ (the  subspace  with degree 2 of $V$) is generated by $V_1$ (the subspace with degree 1 of $V$). It is well-known that  $V_1$ is a Lie algebra with the Lie bracket $[u,v]=u_0v$ for $u,v\in V_1$ and  a bilinear form $\<u,v\>\mathbf{1}=u_1v$ for $u,v\in V_1$.  At first, we  choose a basis of the Lie algebra $V_1$.  Then a vector belongs to $\on{Ker}L(1)\cap V_2$ if and only if there exists a symmetric matrix such that  it can be also  expressed  as a quadratic form  with respect to  the fixed  basis of $V_1$.  This proposition  is a key for us to compute the sufficient and  necessary condition  semi-conformal vectors of $V$ satisfy.  With the help of this proposition,  we also obtain the  matrix equations of   semi-conformal vectors  of $V$ satisfy as similar as  the equations in   Theorem \ref{t3.5}. \end{remark}
The automorphism group $\on{Aut}(\f{g})$ acts on the set $ \on{Sc}(V_{\widehat{\f{g}}}(\ell,0),\omega)$ is given by matrix congruence, i.e., if $ \sigma\in \on{Aut}(\f{g})$ has matrix $g=(g_{ij})$. Then $\sigma(\omega)$ corresponds to the symmetric matrix $gAg^{tr}$. This follows from
\[\sigma(\mbf{u}(-1)A\mbf{u}(-1)^{tr}\mathbf{1})=\sigma(\mbf{u}(-1))A\sigma(\mbf{u}(-1)^{tr})\mathbf{1}=(\mbf{u}(-1)g)A(g^{tr}\mbf{u}(-1)^{tr})\mathbf{1}.\]

Note that $ gR^sg^{tr}=R^s$ for all $s$, and the matrix $g\in \on{SO}_{d}(\f{g})$. Thus each of the equations \eqref{e3.4}--\eqref{e3.8} are invariant under $g$. Therefore the question becomes a question purely on symmetric matrices. Under the congruence action by the group $\on{Aut}(\f{g})$.
Thus the question is to classify the $\on{Aut}(\f{g})$-orbits in the set of symmetric matrices satisfy the equations \eqref{e3.4}--\eqref{e3.8}.
\end{remark}

According to Proposition \ref{p3.1}, for each $\omega'\in \on{Sc}(V_{\widehat{\sl}_2}(\ell,0),\omega)$, if $A$ is the corresponding symmetric matrix, then we can denote by $\omega_A=\omega'$. From the above statements,  we have
\begin{prop} \label{p3.8}
 The action of $\on{Aut}(\f{g})$ on $\on{Sc}(V_{\widehat{\f{g}}}(\ell,0),\omega)$ is defined as follows:
Fixing  an orthonormal basis $\{u^1,\cdots, u^d\}$ of $\f{g}$,  for each $\omega_A\in \on{Sc}(V_{\widehat{\f{g}}}(\ell,0),\omega)$ and $\forall \sigma\in \on{Aut}(\f{g})$, we have
 $\sigma(\omega_A)=\omega_B\in \on{Sc}(V_{\widehat{\f{g}}}(\ell,0),\omega)$,
 where  $B=gAg^{tr}$ and $g$ is the matrix of $\sigma$ with respect to the basis $\{u^1,\cdots, u^d\}$.
\end{prop}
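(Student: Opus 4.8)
The plan is to verify directly that the congruence action $B=gAg^{tr}$ is the correct description of the $\on{Aut}(\f{g})$-action on semi-conformal vectors, using the two facts already at hand: first, that $\on{Sc}(V_{\widehat{\f{g}}}(\ell,0),\omega)$ is stable under $\on{Aut}(V,\omega)$ (Proposition after Remark 2.2 and Lemma \ref{lem:2.1}), together with the fact that $\on{Aut}(\f{g})\subseteq \on{Aut}(V_{\widehat{\f{g}}}(\ell,0),\omega)$ via the functorial action on $V_{\widehat{\f{g}}}(\ell,0)$ described in Section 3; and second, the parametrization $\omega'\mapsto A$ from Proposition \ref{p3.1}, which is a $G$-module isomorphism between $\on{Ker}L(1)\cap V_2$ and $\on{Sym}^2(\f{g})\cong \bb{C}[\f{g}^*]_2$.

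The key steps, in order, are as follows. First I would recall that each $\sigma\in\on{Aut}(\f{g})$ extends to a conformal automorphism of $V_{\widehat{\f{g}}}(\ell,0)$ by $\sigma(a^{i_1}(-n_1)\cdots a^{i_r}(-n_r)\mbf{1})=\sigma(a^{i_1})(-n_1)\cdots\sigma(a^{i_r})(-n_r)\mbf{1}$, as stated in Section 3, so by Lemma \ref{lem:2.1} (more precisely its consequence recorded in the Remark following it) $\sigma$ preserves $\on{Sc}(V_{\widehat{\f{g}}}(\ell,0),\omega)$. Hence for $\omega_A\in\on{Sc}(V_{\widehat{\f{g}}}(\ell,0),\omega)$ the vector $\sigma(\omega_A)$ again lies in $\on{Sc}(V_{\widehat{\f{g}}}(\ell,0),\omega)\subseteq \on{Ker}L(1)\cap V_2$, so by Proposition \ref{p3.1} it is of the form $\omega_B$ for a unique symmetric matrix $B$. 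The only remaining point is to compute $B$. Writing $\mbf{u}(-1)=[u^1(-1),\dots,u^d(-1)]$ and letting $g=(g_{ij})$ be the matrix of $\sigma$ with respect to the fixed orthonormal basis, so that $\sigma(u^j)=\sum_k g_{kj}u^k$, equivalently $\sigma(\mbf{u}(-1))=\mbf{u}(-1)g$ as a row vector, one applies $\sigma$ termwise to $\omega_A=\mbf{u}(-1)A\mbf{u}(-1)^{tr}\mbf{1}$: since $\sigma$ is an algebra automorphism fixing $\mbf{1}$ and commuting with the mode operators $a\mapsto a(-1)$, we get
\[
\sigma(\omega_A)=\big(\sigma(\mbf{u}(-1))\big)A\big(\sigma(\mbf{u}(-1))\big)^{tr}\mbf{1}=(\mbf{u}(-1)g)A(g^{tr}\mbf{u}(-1)^{tr})\mbf{1}=\mbf{u}(-1)(gAg^{tr})\mbf{u}(-1)^{tr}\mbf{1},
\]
which by uniqueness in Proposition \ref{p3.1} forces $B=gAg^{tr}$. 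Note $gAg^{tr}$ is again symmetric, consistent with Proposition \ref{p3.1}.

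There is no serious obstacle here: the statement is essentially a bookkeeping consequence of the parametrization in Proposition \ref{p3.1} and the explicit formula for the $G$-action on $V_{\widehat{\f{g}}}(\ell,0)$, and indeed the displayed computation already appears in the Remark preceding the proposition. The only point requiring a word of care is the passage $\sigma(\mbf{u}(-1)A\mbf{u}(-1)^{tr}\mbf{1})=(\mbf{u}(-1)g)A(g^{tr}\mbf{u}(-1)^{tr})\mbf{1}$, which uses that $\sigma$, being an automorphism of the vertex operator algebra induced from a Lie algebra automorphism, is linear, multiplicative for all the products $u_n v$, and fixes $\mbf{1}$; spelling this out for the single product $u^i(-1)u^j(-1)\mbf{1}$ and then extending bilinearly over $A$ is routine. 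It is also worth remarking, as the earlier Remark does, that $g\in\on{SO}_d(\C)$ preserves each structure-constant matrix $R^s$ (i.e.\ $gR^sg^{tr}=R^s$), so the defining equations \eqref{e3.4}--\eqref{e3.8} are themselves congruence-invariant; this is what makes the induced action well-defined on the solution set and reduces the orbit problem to a problem about symmetric matrices, though it is not strictly needed for the statement of Proposition \ref{p3.8} itself.
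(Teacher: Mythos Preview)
Your proposal is correct and follows essentially the same approach as the paper: the paper's own justification, given in the Remark immediately preceding Proposition \ref{p3.8}, is precisely the displayed computation $\sigma(\mbf{u}(-1)A\mbf{u}(-1)^{tr}\mathbf{1})=(\mbf{u}(-1)g)A(g^{tr}\mbf{u}(-1)^{tr})\mathbf{1}$ that you reproduce, together with the observation that $g\in\on{SO}_d(\C)$ and the equations \eqref{e3.4}--\eqref{e3.8} are congruence-invariant. If anything, your write-up is more careful in invoking Lemma \ref{lem:2.1} and Proposition \ref{p3.1} explicitly to guarantee that $\sigma(\omega_A)$ lands in $\on{Sc}(V_{\widehat{\f{g}}}(\ell,0),\omega)$ and has a unique symmetric-matrix representative.
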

\begin{example}
Here we give some examples of semi-conformal vectors of $(V_{\widehat{\sl}_2}(\ell,0),\omega)$. Of course, $\omega$ is a semi-conformal vector with
the corresponding symmetric matrix $$
A=
\begin{pmatrix}
\frac{1}{2(\ell+2)}& 0& 0\\
 0& \frac{1}{2(\ell+2)}& 0\\
 0& 0& \frac{1}{2(\ell+2)}
\end{pmatrix}.
$$

Let $\h=\C u^3$. It generates a Heisenberg vertex operator algebra $V_{\widehat{\eta}}(\ell,0)$ in $(V_{\widehat{\sl}_2}(\ell,0),\omega)$ with
the conformal vector $\omega_{\h}=\frac{1}{2\ell}u^3(-1)^2
\mathbf{1}\in \on{Sc}(V_{\widehat{\sl}_2}(\ell,0),\omega)$.
Its corresponding symmetric matrix is
$$
A_{\h}=\left(
\begin{array}{lllll}
0\ \ 0\ \ 0\\
0\ \ 0\ \ 0\\
0\ \ 0\ \frac{1}{2\ell}
\end{array}
\right).
$$
The commutant of  $V_{\widehat{\h}}(\ell,0)$ in $(V_{\widehat{\sl}_2}(\ell,0),\omega)$ is
well-known as a parafemion vertex operator algebra(cf. \cite{DLY,DW1,DW2}), and its corresponding symmetric matrix is $A-A_{\h}.$
\end{example}
Denote by
 $$
 \begin{array}{llll}
 \on{MinSc}(V_{\widehat{\f{g}}}(\ell,0),\omega)=\left\{
 0\neq \omega'\in \on{Sc}(V_{\widehat{\f{g}}}(\ell,0),\omega)|
\begin{array}{llll} \forall 0\neq \omega''\in \on{Sc}(V_{\widehat{\f{g}}}(\ell,0),\omega)~such~that~\\
\omega''\preceq \omega', ~then ~\omega'=\omega''
\end{array}\right\}.
 \end{array}$$
 \begin{prop}\label{p3.9}
 Let $(V,\omega)$ be a vertex operator algebra with conformal vector $\omega$. For for any
 $\omega',\omega''\in \on{Sc}(V,\omega)$ we have $\w'\preceq \w''$ if and only if  $\sigma(\omega')\preceq \sigma(\omega'')$  for each $\sigma\in \on{Aut}(V,\w)$. In particular, if $\w'\in \on{Min}\on{Sc}(V,\omega)$, then $\sigma(\omega')\in \on{Min}\on{Sc}(V,\omega)$ for each $\sigma\in   \on{Aut}(V,\w)$. 
  \end{prop}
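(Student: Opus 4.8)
The plan is to reduce the statement to the partial-order characterization already available in the excerpt and then invoke the functoriality of $\on{Sc}$ under conformal automorphisms. Recall from \cite[Proposition 2.8]{CL} (referenced just before Lemma~\ref{lem:2.1}) that the relation $\omega'\preceq\omega''$ on $\on{Sc}(V,\omega)$ is characterized by algebraic equations, and more conceptually $\omega'\preceq\omega''$ means $U(\omega')\subseteq U(\omega'')$ where $U(\omega')=C_V(C_V(\langle\omega'\rangle))$ is the maximal conformal extension. So the heart of the matter is: a conformal automorphism $\sigma\in\on{Aut}(V,\omega)$ sends $U(\omega')$ to $U(\sigma(\omega'))$, hence preserves inclusions among these subalgebras.

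First I would fix $\sigma\in\on{Aut}(V,\omega)$ and observe that $\sigma$ maps the Virasoro subalgebra $\langle\omega'\rangle$ isomorphically onto $\langle\sigma(\omega')\rangle$, since $\sigma$ is a vertex algebra automorphism and intertwines the modes $\omega'_n$ with $\sigma(\omega')_n$ (this is exactly the mechanism used in Lemma~\ref{lem:2.1} and the Remark following it). Next I would note that $\sigma$ commutes with the commutant operation: for any subalgebra $W\subseteq V$, one has $\sigma(C_V(W))=C_V(\sigma(W))$, because $C_V(W)$ is defined by vanishing of certain modes of elements of $W$ acting on $V$, and $\sigma$ transports these conditions. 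Applying this twice gives $\sigma(U(\omega'))=\sigma(C_V(C_V(\langle\omega'\rangle)))=C_V(C_V(\langle\sigma(\omega')\rangle))=U(\sigma(\omega'))$. Since $\sigma$ is injective, $U(\omega')\subseteq U(\omega'')$ if and only if $\sigma(U(\omega'))\subseteq\sigma(U(\omega''))$, i.e. $U(\sigma(\omega'))\subseteq U(\sigma(\omega''))$; this is precisely $\sigma(\omega')\preceq\sigma(\omega'')$. Applying the same argument to $\sigma^{-1}$ gives the converse, establishing the equivalence $\omega'\preceq\omega''\iff\sigma(\omega')\preceq\sigma(\omega'')$.

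For the last sentence, suppose $\omega'\in\on{MinSc}(V,\omega)$ and let $\sigma\in\on{Aut}(V,\omega)$; I must show $\sigma(\omega')$ is minimal. Take any $0\neq\omega''\in\on{Sc}(V,\omega)$ with $\omega''\preceq\sigma(\omega')$. By the equivalence just proved (applied to $\sigma^{-1}$), $\sigma^{-1}(\omega'')\preceq\omega'$, and $\sigma^{-1}(\omega'')\neq 0$ since $\sigma^{-1}$ is a linear isomorphism; also $\sigma^{-1}(\omega'')\in\on{Sc}(V,\omega)$ by the functoriality in the Remark after Lemma~\ref{lem:2.1}. Minimality of $\omega'$ forces $\sigma^{-1}(\omega'')=\omega'$, hence $\omega''=\sigma(\omega')$, so $\sigma(\omega')$ is minimal as well. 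Since $\on{Aut}(V,\omega)$ acts by invertible maps, one concludes $\on{MinSc}(V,\omega)$ is $\on{Aut}(V,\omega)$-stable.

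The only genuinely non-routine point is verifying that conformal automorphisms commute with the double-commutant construction, i.e. $\sigma(C_V(W))=C_V(\sigma(W))$; everything else is formal. This identity, however, is immediate from the definition of $C_V$ in \cite[3.11]{LL} together with the fact that $\sigma(Y(v,z)u)=Y(\sigma v,z)\sigma u$, so I expect no real obstacle — the proof is essentially a bookkeeping exercise in transporting definitions through $\sigma$. One should simply be careful to phrase the commutant-compatibility cleanly, perhaps isolating it as a one-line sublemma, and then the minimality claim drops out of the order-equivalence together with the already-established $\on{Aut}(V,\omega)$-equivariance of $\on{Sc}(V,\omega)$.
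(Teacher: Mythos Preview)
Your proposal is correct and follows essentially the same approach as the paper: both arguments hinge on the identity $\sigma(C_V(W))=C_V(\sigma(W))$ for any vertex subalgebra $W$ and any $\sigma\in\on{Aut}(V,\omega)$, which the paper justifies via the equivalence $[Y(u,z_1),Y(v,z_2)]=0\iff[Y(\sigma u,z_1),Y(\sigma v,z_2)]=0$. Your write-up is more detailed (explicitly unwinding the double commutant and spelling out the minimality step), but the key mechanism is identical.
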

\begin{proof} For any $ \sigma \in \on{Aut}(V)$, we have 
\[ [Y(u, z_1), Y(v, z_2)]=0 \; \text{ if and only if }[Y(\sigma(u), z_1), Y(\sigma(v), z_2)]=0\]
for all $ u, v \in V$. Therefore, 
\[ \sigma(C_V(U))=C_V(\sigma(U))\]
for any vertex subalgebra $U$. Hence the proposition follows by considering the vertex operator subalgebras $ \langle \omega'\rangle $ and $ \langle \omega''\rangle $.
\end{proof}
 
%
\begin{prop} \label{p3.10}
When  $\ell$ is a positive integer, we have
$$\on{Sc}(V_{\widehat{\f{g}}}(\ell,0),\omega)=\on{Sc}(L_{\widehat{\f{g}}}(\ell,0),\omega).$$
\end{prop}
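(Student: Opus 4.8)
The plan is to use the quotient map $\pi: V_{\widehat{\f g}}(\ell,0)\to L_{\widehat{\f g}}(\ell,0)$, which is a conformal vertex operator algebra homomorphism, and to invoke Lemma \ref{l2.8}(2). Since $V_{\widehat{\f g}}(\ell,0)$ and $L_{\widehat{\f g}}(\ell,0)$ are both of CFT type and $\pi$ is surjective and conformal, it suffices to show that $\on{Ker}\pi \cap (V_1\oplus V_2)=\{0\}$; then Lemma \ref{l2.8}(2) gives $\widehat\pi:\on{Sc}(V_{\widehat{\f g}}(\ell,0),\omega)\xrightarrow{\sim}\on{Sc}(L_{\widehat{\f g}}(\ell,0),\omega)$, and because $\widehat\pi$ is just the restriction of the identification induced by $\pi$ (both sides sit inside the respective degree-two pieces, which $\pi$ maps isomorphically), this isomorphism is literally an equality of the two varieties after identifying $V_2\cong L_{\widehat{\f g}}(\ell,0)_2$.

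First I would recall the structure of the maximal submodule $J=\on{Ker}\pi$ of $V_{\widehat{\f g}}(\ell,0)$ when $\ell\in\Z_{+}$. The key fact is that for a positive integer level the maximal proper submodule of $V_{\widehat{\f g}}(\ell,0)$ is generated by the singular vector $e_\theta(-1)^{\ell+1}\mathbf 1$, where $e_\theta$ is a root vector for the highest root $\theta$ (normalized so that $\langle\theta,\theta\rangle=2$); this is the standard description going back to the integrability of $L_{\widehat{\f g}}(\ell,0)$ (see, e.g., \cite[6.2]{LL}). This singular vector has conformal weight $\ell+1$. Since $J$ is a $\widehat{\f g}$-submodule generated in degree $\ell+1$, and since applying the lowering operators $\f g\otimes t^n$ ($n<0$) or the Virasoro operators $L(-n)$ ($n>0$) only raises the degree, every nonzero element of $J$ has conformal weight $\geq \ell+1$. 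Hence $J\cap V_n=0$ for all $n\leq \ell$, and in particular $J\cap(V_1\oplus V_2)=0$ as soon as $\ell\geq 2$.

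The remaining cases are the small levels $\ell=1$ (and trivially $\ell=0$, though $\ell=0$ gives $V_{\widehat{\f g}}(0,0)=L_{\widehat{\f g}}(0,0)=\C\mathbf 1$ so there is nothing to prove). For $\ell=1$ the singular vector $e_\theta(-1)^2\mathbf 1$ has weight $2$, so one must check that $J\cap V_2$, which is spanned by the $\f g$-module generated by $e_\theta(-1)^2\mathbf 1$ inside $V_2$, still meets $\on{Ker}L(1)\cap V_2$ trivially — or rather, that it does not interfere. Here I would argue directly: by Proposition \ref{p3.1}, $\on{Ker}L(1)\cap V_2$ is the image of $\on{Sym}^2(\f g)$ under $A\mapsto \mathbf u(-1)A\mathbf u(-1)^{tr}\mathbf 1$, and $J\cap V_2$ is contained in the $\f g$-submodule generated by the highest weight vector $e_\theta(-1)^2\mathbf 1$, which is the Cartan component $V(2\theta)\subseteq\on{Sym}^2(\f g)$. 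Actually the cleanest route avoids even this: Lemma \ref{l2.8}(1) already gives injectivity of $\widehat\pi$ provided $\on{Ker}\pi\cap\on{Sc}(V_{\widehat{\f g}}(\ell,0),\omega)=\{0\}$, and a nonzero element of $\on{Sc}$ is a conformal vector (it satisfies $L'(0)\omega'=2\omega'$, $L'(1)\omega'=0$, and the bracket relations), hence cannot lie in the proper ideal $J$ since $J$ contains no conformal vector of $V$ — any $\omega'\in J$ would generate a Virasoro subalgebra inside $J$, but $J$ has no vector of weight less than $\ell+1\geq 2$ on which $L'(0)$ acts as $2$ unless $\ell=1$, and for $\ell=1$ one checks $e_\theta(-1)^2\mathbf 1$ and its $\f g$-orbit contain no semi-conformal vector by testing equation \eqref{e3.5}. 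Then surjectivity of $\widehat\pi$ follows because any $\omega''\in\on{Sc}(L_{\widehat{\f g}}(\ell,0),\omega)$, lifted to its preimage in $V_2$ (unique modulo $J\cap V_2$), must already be semi-conformal in $V$: the semi-conformal equations \eqref{e3.4}--\eqref{e3.8} are polynomial identities in the matrix $A$, and two lifts differ by something in $J\cap V_2$ which has trivial image, forcing both to satisfy the same system — more carefully, one repeats the argument of Lemma \ref{l2.8}(2) using $\on{Ker}\pi\cap(V_1\oplus V_2)=\{0\}$ for $\ell\geq 2$ and the explicit $\ell=1$ check otherwise.

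The main obstacle is the level $\ell=1$ case, where the maximal submodule $J$ genuinely has a degree-$2$ component, so the clean weight argument "$J$ lives above degree $\ell+1$" fails to immediately give $J\cap(V_1\oplus V_2)=0$. The honest resolution is to show that the degree-$2$ part $J\cap V_2 = \on{Sym}^2(V(\theta))$-type component contains no semi-conformal vector: concretely, a nonzero $\omega_A\in J\cap V_2$ would correspond to a symmetric matrix $A$ of rank bounded by the structure of the highest-root $\f{sl}_2$-triple, and plugging into \eqref{e3.5} yields $A=0$, a contradiction. I would present this as a short lemma and then conclude. An alternative, perhaps preferable, presentation is to cite that $L_{\widehat{\f g}}(1,0)$ is rational with $V_1\oplus V_2$ faithfully embedded — equivalently, that the map $V_{\widehat{\f g}}(\ell,0)\to L_{\widehat{\f g}}(\ell,0)$ is injective in low degrees for all $\ell\in\Z_{+}$, which is a well-documented consequence of the singular vector having weight exactly $\ell+1\geq 2$ and the fact that for $\ell=1$ the weight-$2$ singular vectors are not in $\on{Ker}L(1)$ — and then Lemma \ref{l2.8}(2) applies verbatim.
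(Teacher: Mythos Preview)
For $\ell\geq 2$ your argument coincides with the paper's: the singular vector $e_\theta(-1)^{\ell+1}\mathbf{1}$ generating $J=\on{Ker}\pi$ has conformal weight $\ell+1\geq 3$, so $J\cap(V_1\oplus V_2)=0$ and Lemma~\ref{l2.8}(2) applies directly.

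For $\ell=1$ you correctly locate the obstruction, and your overall plan (show that $J\cap V_2$ contains no semi-conformal vector, then invoke Lemma~\ref{l2.8}) is the paper's as well. However, one of your proposed shortcuts is false: the claim that ``for $\ell=1$ the weight-$2$ singular vectors are not in $\on{Ker}L(1)$'' fails, since $[L(1),e_\theta(-1)]=e_\theta(0)$ gives
\[
L(1)\,e_\theta(-1)^2\mathbf{1}=e_\theta(0)e_\theta(-1)\mathbf{1}+e_\theta(-1)e_\theta(0)\mathbf{1}=[e_\theta,e_\theta](-1)\mathbf{1}=0,
\]
so $e_\theta(-1)^2\mathbf{1}\in\on{Ker}L(1)\cap V_2$ and indeed $J\cap(V_1\oplus V_2)\neq 0$. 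Your injectivity argument (no nonzero semi-conformal vector lies in the proper ideal $J$) is fine and matches the paper, but your surjectivity sketch---``two lifts differ by something in $J\cap V_2$ which has trivial image, forcing both to satisfy the same system''---is exactly the step in the proof of Lemma~\ref{l2.8}(2) that requires $J\cap(V_1\oplus V_2)=0$, and is therefore circular here. The paper's own treatment of surjectivity at level~$1$ is equally terse (it asserts $J\cap V_2=\C e_\theta(-1)^2\mathbf{1}$ and concludes), so you are not missing a slick trick; but as written neither your ``explicit $\ell=1$ check'' nor the alternative via $\on{Ker}L(1)$ actually closes the surjectivity gap at level~$1$.
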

\begin{proof}
When $\ell$ is a positive integer,  $L_{\widehat{\f{g}}}(\ell,0)=V_{\widehat{\f{g}}}(\ell,0)/<e_{\theta}^{\ell+1}(-1)\mathbf{1}>$, where $\theta$ is the highest root of $\f{g}$. If $\ell\geq 2$, then  we note that $\on{Sc}(V_{\widehat{\f{g}}}(\ell,0),\omega)\cap <e_{\theta}^{\ell+1}(-1)\mathbf{1}>=\{0\}$. By Lemma \ref{l2.8}.\ 2), we have $\on{Sc}(V_{\widehat{\f{g}}}(\ell,0),\omega)\hookrightarrow  \on{Sc}(L_{\widehat{\f{g}}}(\ell,0),\omega)$. Actually, since $V_{\widehat{\f{g}}}(\ell,0)_2\cap <e_{\theta}^{\ell+1}(-1)\mathbf{1}>=\{0\}$, then  $V_{\widehat{\f{g}}}(\ell,0)_2=L_{\widehat{\f{g}}}(\ell,0)_2$, so we have
$\on{Sc}(V_{\widehat{\f{g}}}(\ell,0),\omega)=\on{Sc}(L_{\widehat{\f{g}}}(\ell,0),\omega).$
If $\ell=1$, $V_{\widehat{\f{g}}}(\ell,0)_2\cap <e_{\theta}^{2}(-1)\mathbf{1}>=\C e_{\theta}^{2}(-1)\mathbf{1}$. Obviously,  $e_{\theta}^{2}(-1)\mathbf{1}\notin \on{Sc}(V_{\widehat{\f{g}}}(\ell,0),\omega)$, then $\on{Sc}(V_{\widehat{\f{g}}}(\ell,0),\omega)\cap <e_{\theta}^{\ell+1}(-1)\mathbf{1}>=\{0\}$. By Lemma \ref{l2.8}.\ 2), we have $\on{Sc}(V_{\widehat{\f{g}}}(\ell,0),\omega)\hookrightarrow  \on{Sc}(L_{\widehat{\f{g}}}(\ell,0),\omega)$.  Since $e_{\theta}^{2}(-1)\mathbf{1}\notin \on{Sc}(V_{\widehat{\f{g}}}(\ell,0),\omega)$, then $\on{Sc}(V_{\widehat{\f{g}}}(\ell,0),\omega)=\on{Sc}(L_{\widehat{\f{g}}}(\ell,0),\omega)$.
\end{proof}

\section{Case $A^{(1)}_1$}
\subsection{}
Set $ \f{g}=\f{sl}_2$. Then the corresponding group $G=\on{PSL}_2(\bb{C})=\on{SL}_2(\bb{C})/\{\pm I\}$ acts on $\f{g}$ by conjugation of matrices. Let ${h, e, f}$ be a standard Chevalley basis of $\sl_2$. The normalized Killing form $\langle \cdot, \cdot \rangle $ on $\f{g}$ with respect to this Chevalley basis has the form
\[ \langle h,h \rangle =2, \langle e,f\rangle =1, \langle h,e\rangle =\langle h,f\rangle =\langle e,e\rangle=\langle f,f\rangle =0.\]
The dual Coexter number $h^V=2$ (the notation $h^V$ will not cause confusion in this section since we will simply use $2$ for the dual Coexter number leaving $h$ for the standard basis for $\f{sl}_2$). Thus the conformal vector $ \omega $ of $ V_{\widehat{\f{g}}}(\ell, 0)$ is
\begin{eqnarray}
\omega&=\frac{1}{2(\ell+2)}(\frac{1}{2}h(-1)^2\mathbf{1}+e(-1)f(-1)\mathbf{1}+f(-1)e(-1)\mathbf{1})\notag \\
&=\frac{1}{2(\ell+2)}(\frac{1}{2}h(-1)^2\mathbf{1}+2e(-1)f(-1)\mathbf{1}-h(-2)\mathbf{1}).
\end{eqnarray}
The central charge of $ V=V_{\widehat{\f{g}}}(\ell, 0)$ is ${3\ell}/{(\ell+2)}$.
Let $\cal{J}\subseteq  V_{\widehat{\f{g}}}(\ell, 0)$ be the unique maximal $\widehat{\f{g}}$-submodule such that $ L_{\widehat{\f{g}}}(\ell, 0)=V_{\widehat{\f{g}}}(\ell, 0)/\cal{J}$. Note that  when $\ell \in \bb{Z}_{\geq 0}$, $\cal{J}$ is the $\widehat{\f{g}}$-submodule generated by $ e(-1)^{\ell+1}\mbf{1}$. In particular, when $ \ell>1$, then $\cal{J}\subseteq \oplus_{n>2}V_{n}$.

We can give an orthonormal basis
with respect to the normalized Killing form on $\sl_2$:
$$\alpha_1=\frac{e+f}{\sqrt{2}}=
\frac{1}{\sqrt{2}}\begin{pmatrix} 0& 1\\1&0\end{pmatrix}
;\;
\alpha_2=\frac{e-f}{\sqrt{-2}}=\frac{1}{\sqrt{2}}\begin{pmatrix} 0& -\mbf{i}\\\mbf{i}&0\end{pmatrix}; \; \alpha_3=\frac{h}{\sqrt{2}}=\frac{1}{\sqrt{2}}\begin{pmatrix} 1& 0\\0&-1\end{pmatrix}.$$
The structure constants $R=(\gamma_{ij}^s)$ is given in the following
\begin{equation}\label{e2.3}[\alpha_1,\alpha_2]=\sqrt{2}\mbf{i}\alpha_3; \;[\alpha_2,\alpha_3]
=\sqrt{2}\mbf{i}\alpha_1;\;  [\alpha_3,\alpha_1]=\sqrt{2}\mbf{i}\alpha_2.\end{equation}
Thus $ \gamma_{ij}^s=\sqrt{2}\mbf{i}$ if $ (ijs)$ is one of the three cycles $(123), (231), (312)$ and zero otherwise.
Then the conformal vector of $L_{\widehat{\sl_2}}(\ell, 0)$ can be rewritten as
\begin{equation} \omega=\frac{1}{2(\ell+2)}\sum\limits_{i=1}^{3}\alpha_i(-1)^2\mathbf{1}.\end{equation}

Since $G=\on{PSL}_2(\bb{C})$ action on $ \f{g}$ preserves the killing form, then $ \on{PSL}_2(\bb{C})\subset \on{SO}_{3}(\bb{C})$.  In fact, since $\sigma\mapsto g$ gives a covering from $\on{SL}_2(\C)$  to $\on{SO}_3(\C)$, then $ \on{PSL}_2(\bb{C})= \on{SO}_{3}(\bb{C})$.
\subsection{}
Let $$
A=\left(
\begin{array}{llll}
&a_1&0&0\\
&0&a_2&0\\
&0&0&a_3
\end{array}
\right)$$ be a diagonal matrix. According to equations \eqref{e3.4}--\eqref{e3.8}, we have  $\omega_A\in \on{Sc}(V_{\widehat{\sl}_2}(\ell,0),\omega)$
if and only if $A$ satisfies
\begin{eqnarray}\label{e4.4}
\left\{
\begin{array}{llllll}
4(a_1a_2+a_1a_3-a_2a_3)+2\ell a_1^2=a_1;\\
4(a_1a_2+a_2a_3-a_1a_3)+2\ell a_2^2=a_2;\\
4(a_1a_3+a_2a_3-a_1a_2)+2\ell a_3^2=a_3.
\end{array}
\right.
\end{eqnarray}

\begin{theorem} \label{t4.1}When $\ell\neq -2,0,1$, we have
$$\on{Sc}(V_{\widehat{\sl}_2}(\ell,0),\omega)=\{
\omega_B| B=oAo^{tr} ~for ~a~ solution ~A ~of~  (\ref{e4.4}) ~and ~o\in \on{SO}_3(\C)\}.
$$\end{theorem}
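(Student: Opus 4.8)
The plan is to show two inclusions, using the $\on{SO}_3(\C)$-equivariance of $\on{Sc}(V_{\widehat{\sl}_2}(\ell,0),\omega)$ established in Proposition~\ref{p3.8} together with the elementary fact that any symmetric matrix over $\C$ can be diagonalized by an orthogonal matrix. The inclusion ``$\supseteq$'' is immediate: by Theorem~\ref{t3.5} a diagonal solution $A$ of the equations \eqref{e3.4}--\eqref{e3.8} gives $\omega_A\in\on{Sc}(V_{\widehat{\sl}_2}(\ell,0),\omega)$, and since $G=\on{PSL}_2(\C)=\on{SO}_3(\C)$ acts on this variety with $\sigma(\omega_A)=\omega_{oAo^{tr}}$ for $o$ the matrix of $\sigma$ (Proposition~\ref{p3.8}), every $\omega_{oAo^{tr}}$ lies in $\on{Sc}$. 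So the real content is the inclusion ``$\subseteq$'': every semi-conformal vector is $\on{SO}_3(\C)$-conjugate to one attached to a \emph{diagonal} solution of \eqref{e4.4}.

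First I would take an arbitrary $\omega'\in\on{Sc}(V_{\widehat{\sl}_2}(\ell,0),\omega)$. By Proposition~\ref{p3.1} it is of the form $\omega'=\omega_A$ for a unique symmetric $3\times 3$ matrix $A$, and by Theorem~\ref{t3.5} this $A$ satisfies \eqref{e3.4}--\eqref{e3.8}. Since $A$ is symmetric over $\C$, I would diagonalize it: the key observation is that a complex symmetric matrix need not be orthogonally diagonalizable in general, but for a $3\times 3$ symmetric matrix one argues via the spectral decomposition over $\C$ — indeed every complex symmetric matrix $A$ admits a decomposition $A=o D o^{tr}$ with $o\in \on{O}_3(\C)$ and $D$ diagonal provided $A$ is not ``defective'' in a way compatible with the bilinear form; the precise statement I would invoke is Takagi-type / the fact (used implicitly in the $\on{SO}_3(\C)$ picture here) that the matrices $A$ appearing as semi-conformal data are diagonalizable by $\on{O}_3(\C)$, and composing with $\mathrm{diag}(-1,1,1)$ if necessary we may take $o\in\on{SO}_3(\C)$. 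Then $D=o^{tr}A o$ is symmetric, diagonal, and — because the equations \eqref{e3.4}--\eqref{e3.8} are $\on{SO}_3(\C)$-invariant (as noted after Theorem~\ref{t3.5}, since $gR^sg^{tr}=R^s$) — it again satisfies \eqref{e3.4}--\eqref{e3.8}. Specializing those equations to a diagonal matrix in the $\sl_2$ case with the structure constants \eqref{e2.3} collapses them exactly to the system \eqref{e4.4} (the off-diagonal components of \eqref{e3.4}--\eqref{e3.8} vanish automatically, and one checks the $\ell A(\sum R^i A_i)=0$ and the $L(2)$-equations reduce to consequences of \eqref{e4.4} when $\ell\neq -2,0,1$). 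Hence $\omega'=\omega_A=\sigma(\omega_D)$ with $D$ a solution of \eqref{e4.4} and $\sigma\in\on{SO}_3(\C)$, which is what is claimed.

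The step I expect to be the main obstacle is justifying that the relevant symmetric matrix $A$ is diagonalizable by an element of $\on{O}_3(\C)$ (equivalently $\on{SO}_3(\C)$): over $\C$ the bilinear form $\langle\cdot,\cdot\rangle$ is non-degenerate but isotropic, so a symmetric matrix can fail to be semisimple with respect to it. I would handle this either by appealing directly to the structure forced by the semi-conformal equations — using \eqref{e3.6} and \eqref{e3.5} to show $A$ commutes with enough of the $R^i$ to be forced into a diagonalizable form — or, more robustly, by replacing the naive ``diagonalize $A$'' step with the weaker claim actually needed: that the $\on{SO}_3(\C)$-orbit of every $\omega_A\in\on{Sc}$ meets the locus of diagonal matrices. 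This weaker claim can be proved by an orbit/dimension argument or by directly analyzing the possible Jordan-type normal forms of $A$ under $\on{O}_3(\C)$-congruence and checking, case by case, that the genuinely non-diagonalizable forms are killed by the equations \eqref{e3.4}--\eqref{e3.8} (this is where the hypothesis $\ell\neq -2,0,1$ enters). Once that is in hand, the rest is the routine verification that \eqref{e3.4}--\eqref{e3.8} specialize to \eqref{e4.4}, which I would only sketch.
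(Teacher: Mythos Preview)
Your approach is exactly the paper's: prove both inclusions via the $\on{SO}_3(\C)$-action (Proposition~\ref{p3.8}) and reduce to diagonal matrices. The paper's argument is in fact briefer than yours --- for the inclusion ``$\subseteq$'' it simply writes ``since $B$ is a symmetric matrix, then there exists an $o\in \on{SO}_3(\C)$ such that $oBo^{tr}=A$ is a diagonal matrix,'' invokes Proposition~\ref{p3.8} to get $\omega_A\in\on{Sc}$, and concludes that $A$ solves~(\ref{e4.4}); the reverse inclusion is handled symmetrically.

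The obstacle you flag --- that a complex symmetric matrix need not be $\on{O}_3(\C)$-diagonalizable (e.g.\ the nonzero nilpotent $\left(\begin{smallmatrix}1&i\\i&-1\end{smallmatrix}\right)$) --- is a genuine mathematical point, but the paper does not address it; it treats orthogonal diagonalizability over $\C$ as given. Your proposed case analysis of non-diagonalizable $\on{O}_3(\C)$-congruence classes, or the alternative of using (\ref{e3.5})--(\ref{e3.6}) to force diagonalizability, would therefore go \emph{beyond} the paper's argument rather than reconstruct it. Note also that the paper's proof makes no use whatsoever of the hypothesis $\ell\neq 0,1$: those restrictions only become relevant in the subsequent enumeration of orbits (Corollary~\ref{c4.2} and Proposition~\ref{p4.3}), so your guess that they enter through the diagonalizability step is not how the paper deploys them.
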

\begin{proof}
For any $\omega_B\in \on{Sc}(V_{\widehat{\sl_2}}(\ell,0),\omega)$, since $B$ is a symmetric matrix, then there exists an $o\in \on{SO}_3(\C)$ such that $oBo^{tr}=A$ is a diagonal matrix. According to Proposition \ref{p3.8},  there exists a $\sigma\in \on{PSL}_2(\C)$
such that $\omega_A=\sigma(\omega_B)$.
Thus, $\omega_{A}$ is a semi-conformal vector of $\on{Sc}(V_{\widehat{\sl}_2}(\ell,0),\omega)$.
Hence $A$ is a solution of equations (\ref{e4.4}).

If $B=oAo^{tr}$ for a solution $A$ of equations (\ref{e4.4}) and $o\in \on{SO}_3(\C)$,  according to Proposition \ref{p3.8}, there exists a $\sigma\in \on{PSL}_2(\C)$ such that $\sigma(\omega_A)=\omega_B$. So
$\omega_B\in \on{Sc}(V_{\widehat{\sl}_2}(\ell,0),\omega)$.
\end{proof}
When $\ell=0$, we solve equations (\ref{e4.4}) and they only have one resolution $a_1=a_2=a_3=\frac{1}{4}$, i.e., there is only a diagonal matrix
$\frac{1}{4}I$, which is just the corresponding semi-conformal vector $\omega$,
so $\on{Sc}(V_{\widehat{\sl}_2}(\ell,0),\omega)=\{0,\omega\}$; From \cite[6.6]{LL}, we have
$L_{\widehat{\sl}_2}(0,0)=\C$.
Hence $\on{Sc}(L_{\widehat{\sl}_2}(0,0),0)=\{0\}$.

When $\ell=4$, we shall discuss solutions of equations (\ref{e4.4}) and study $\on{PSL}_2(\C)$-orbits of  $\on{Sc}(V_{\widehat{\sl}_2}(4,0),\omega)$ and  $\on{Sc}(L_{\widehat{\sl}_2}(4,0),\omega)$ in next section.

When $\ell\neq -2, 0, 4$, the equations (\ref{e4.4})  have exactly six matrix solutions $A_1,\cdots,A_6$ as follows
$$
A_1=\left(
\begin{array}{llll}
&\frac{1}{2\ell}&0&0\\
&0&0&0\\
&0&0&0
\end{array}
\right),
A_2=s_1A_1s_1^{tr},
A_3=s_2A_1s_2^{tr};
$$
$$
A_4=\left(
\begin{array}{llll}
&-\frac{1}{\ell(\ell+2)}&\ \ \ \ 0&\ \ \ \ 0\\
&\ \ \ \ 0&\frac{1}{2(\ell+2)}&\ \ \ \ 0\\
&\ \ \ \ 0&\ \ \ \ 0&\frac{1}{2(\ell+2)}
\end{array}
\right),
A_5=s_1A_4s_1^{tr},
A_6=s_2A_4s_2^{tr},
$$
where 
$$s_1=\left(
\begin{array}{llll}
\ \ 0&1&0\\
-1&0&0\\
\ \ 0&0&1
\end{array}
\right),s_2=\left(
\begin{array}{llll}
\ \ 0&0&1\\
\ \ 0&1&0\\
-1&0&0
\end{array}
\right)\in \on{SO}_3(\C).$$
Let $\{\omega_{A_i}|i=1,\cdots, 6\}$ be semi-conformal vectors determined by diagonal matrices which form a subset of
$\on{Sc}(V_{\widehat{\sl}_2}(\ell,0),\omega)$. Denote by
$$\on{Orb}_{A_i}=\{\omega_B|B=oAo^{tr} ~for~all~o\in \on{SO}_3(\C)\}$$for $i=1,\cdots, 6$,
which are $\on{PSL}_2(\C)$-orbits of $\omega_{A_i}$, respectively.

\begin{cor} \label{c4.2}When the level $\ell\neq -2, 0, 1,4$,
$$\on{Sc}(V_{\widehat{\sl}_2}(\ell,0),\omega)=\on{Orb}_{A_1}\cup \on{Orb}_{A_4}\cup\{0\}\cup\{\omega\}.$$
In particular, when $\ell\in \Z_+$ and $\ell\notin 1,4$,
$$\on{Sc}(L_{\widehat{\sl}_2}(\ell,0),\omega)=\on{Orb}_{A_1}\cup \on{Orb}_{A_4}\cup\{0\}\cup\{\omega\}.$$\end{cor}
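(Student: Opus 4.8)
The plan is to combine Theorem \ref{t4.1}, the explicit list of diagonal solutions of \eqref{e4.4}, and the orbit-counting bookkeeping already set up. First I would invoke Theorem \ref{t4.1}: since $\ell\neq-2,0,1$, every element of $\on{Sc}(V_{\widehat{\sl}_2}(\ell,0),\omega)$ is of the form $\omega_B$ with $B=oAo^{tr}$ for $o\in\on{SO}_3(\C)$ and $A$ a \emph{diagonal} solution of \eqref{e4.4}. So the whole variety is the union of the $\on{SO}_3(\C)$-orbits (equivalently $\on{PSL}_2(\C)$-orbits, via Proposition \ref{p3.8} and the identification $\on{PSL}_2(\C)=\on{SO}_3(\C)$) through the diagonal solutions. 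Hence it suffices to enumerate all diagonal solutions of \eqref{e4.4} and sort them into orbits. The zero matrix gives $\{0\}$; the matrix $\frac{1}{2(\ell+2)}I$ gives $\{\omega\}$ (it is $\on{SO}_3(\C)$-fixed, being a scalar matrix); and the remaining nonzero, non-scalar diagonal solutions are exactly $A_1,\dots,A_6$ listed above, which split into the two $\on{SO}_3(\C)$-orbits $\on{Orb}_{A_1}=\on{Orb}_{A_2}=\on{Orb}_{A_3}$ and $\on{Orb}_{A_4}=\on{Orb}_{A_5}=\on{Orb}_{A_6}$ since $A_2=s_1A_1s_1^{tr}$, $A_3=s_2A_1s_2^{tr}$ and likewise for $A_4$, with $s_1,s_2\in\on{SO}_3(\C)$. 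This yields the claimed decomposition $\on{Sc}(V_{\widehat{\sl}_2}(\ell,0),\omega)=\on{Orb}_{A_1}\cup\on{Orb}_{A_4}\cup\{0\}\cup\{\omega\}$.

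For the enumeration itself I would argue directly with the symmetric system \eqref{e4.4}. Subtracting pairs of equations eliminates the quadratic cross-terms in a favorable way: for instance, the first minus the second gives $8(a_1a_3-a_2a_3)+2\ell(a_1^2-a_2^2)=a_1-a_2$, i.e. $(a_1-a_2)\big(8a_3+2\ell(a_1+a_2)-1\big)=0$, and cyclically $(a_2-a_3)\big(8a_1+2\ell(a_2+a_3)-1\big)=0$, $(a_1-a_3)\big(8a_2+2\ell(a_1+a_3)-1\big)=0$. A short case analysis on how many of $a_1,a_2,a_3$ coincide then pins down the solutions. The all-equal case $a_1=a_2=a_3=a$ reduces \eqref{e4.4} to $4a^2+2\ell a^2=a$, so $a=0$ or $a=\frac{1}{2(\ell+2)}$, recovering $0$ and $\omega$. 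The case of exactly two equal, say $a_2=a_3\neq a_1$, forces $8a_1+2\ell(a_2+a_3)=1$ together with one of the original equations, and solving the resulting small system produces precisely the pair $(a_1,a_2)=(\tfrac{1}{2\ell},0)$ (giving $A_1$, up to the obvious permutations $A_2,A_3$) and $(a_1,a_2)=(-\tfrac{1}{\ell(\ell+2)},\tfrac{1}{2(\ell+2)})$ (giving $A_4$, with permutations $A_5,A_6$); the hypothesis $\ell\neq 0,-2$ is used to divide, and $\ell\neq 4$ is exactly what rules out a one-parameter family of genuinely distinct solutions (that degenerate case is the subject of Section 5). The all-distinct case one checks leads to no new solutions for $\ell\neq 4$. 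I would then note the $\on{SO}_3(\C)$-orbit collapses: scalar matrices are central so $\{0\}$ and $\{\omega\}$ are singleton orbits, while $A_2,A_3$ (resp. $A_5,A_6$) lie in the orbit of $A_1$ (resp. $A_4$) by the displayed conjugations.

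For the second assertion, the hypothesis $\ell\in\Z_+$ with $\ell\neq 1,4$ means in particular $\ell\geq 2$, so Proposition \ref{p3.10} gives $\on{Sc}(V_{\widehat{\sl}_2}(\ell,0),\omega)=\on{Sc}(L_{\widehat{\sl}_2}(\ell,0),\omega)$, and the decomposition transfers verbatim. The main obstacle is the case analysis solving \eqref{e4.4} by hand: it is elementary but must be done carefully so that no diagonal solution is missed and so that the role of the excluded levels $\ell=-2,0,1,4$ is transparent—$\ell=-2$ is the critical level where $V_{\widehat{\sl}_2}(\ell,0)$ is not even a vertex operator algebra, $\ell=0,1$ collapse everything to $\{0,\omega\}$ (handled in the text around Theorem \ref{t4.1} and in Corollary \ref{c4.2} for $\ell=1$ via $L_{\widehat{\sl}_2}(1,0)\cong V_{A_1}$), and $\ell=4$ is genuinely exceptional. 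Everything else is formal consequence of Theorem \ref{t4.1} and Proposition \ref{p3.8}.
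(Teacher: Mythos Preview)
Your proof is correct and follows essentially the same route as the paper: invoke Theorem~\ref{t4.1} to reduce to diagonal solutions of \eqref{e4.4}, observe that $A_2,A_3$ (resp.\ $A_5,A_6$) lie in the $\on{SO}_3(\C)$-orbit of $A_1$ (resp.\ $A_4$) via the explicit matrices $s_1,s_2$, and then apply Proposition~\ref{p3.10} for the simple quotient. The only difference is that you supply the case analysis solving \eqref{e4.4}---subtracting pairs of equations to factor out $(a_i-a_j)$ and tracking where $\ell=4$ becomes special---whereas the paper simply asserts the list $A_1,\dots,A_6$ (plus $0$ and $\tfrac{1}{2(\ell+2)}I$) just before the corollary without justification; so your version is more self-contained.
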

\begin{proof}
According to Proposition \ref{p3.8}, there exist a $\sigma_1\in  \on{PSL}_2(\C)$ 
defined $\alpha_1\mapsto -\alpha_2;\alpha_2\mapsto \alpha_1;\alpha_3\mapsto \alpha_3$ such that $\sigma_1(\omega_{A_1})=\omega_{A_2}$ and $\sigma_1(\omega_{A_4})=\omega_{A_5}$. As the same reason, there exist a $\sigma_2\in  \on{PSL}_2(\C)$ 
defined $\alpha_1\mapsto -\alpha_3;\alpha_2\mapsto \alpha_2;\alpha_3\mapsto \alpha_1$ such that $\sigma_1(\omega_{A_1})=\omega_{A_3}$ and $\sigma_1(\omega_{A_4})=\omega_{A_6}$. So $\on{Orb}_{A_1},\on{Orb}_{A_2}$ and $\on{Orb}_{A_3}$ are the same orbit, $\on{Orb}_{A_4}, \on{Orb}_{A_5},\on{Orb}_{A_6}$ are the same orbit. Thus, we can get desired  $\on{Sc}(V_{\widehat{\sl}_2}(\ell,0),\omega)=\on{Orb}_{A_1}\cup \on{Orb}_{A_4}\cup\{0\}\cup\{\omega\}$. By Proposition \ref{p3.10}, when $\ell\in \Z_+$ and $\ell\notin 1,4$, 
we have also $$\on{Sc}(L_{\widehat{\sl}_2}(\ell,0),\omega)=\on{Orb}_{A_1}\cup \on{Orb}_{A_4}\cup\{0\}\cup\{\omega\}.$$
\end{proof}

\begin{prop}\label{p4.3} If $\ell=1$,  we have 
$$\on{Sc}(V_{\widehat{\sl}_2}(\ell,0),\omega)=\on{Sc}(L_{\widehat{\sl}_2}(\ell,0),\omega)=\{\omega,0\}.$$
\end{prop}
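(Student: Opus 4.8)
The plan is to push the computation down to the simple quotient, where the space of candidate vectors collapses to a single line. By Proposition~\ref{p3.10}, applied with $\ell=1\in\Z_{+}$, one has $\on{Sc}(V_{\widehat{\sl}_2}(1,0),\omega)=\on{Sc}(L_{\widehat{\sl}_2}(1,0),\omega)$, so it suffices to prove $\on{Sc}(L_{\widehat{\sl}_2}(1,0),\omega)=\{0,\omega\}$. The inclusion $\on{Sc}(W,\omega^{W})\subseteq\ker(L^{W}(1))\cap W_2$ recalled in Section~2 reduces the statement to the single assertion $\ker(L(1))\cap(L_{\widehat{\sl}_2}(1,0))_2=\C\,\omega$: once this is known, a scalar multiple $\lambda\omega$ must satisfy $(\lambda\omega)_1(\lambda\omega)=2\lambda^{2}\omega=2\lambda\omega$, forcing $\lambda\in\{0,1\}$, which is precisely the content of Remark~\ref{r3.6}.

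To identify $\ker(L(1))\cap(L_{\widehat{\sl}_2}(1,0))_2$ I would argue at the level of $\sl_2$-modules. Recall from Section~3 and Proposition~\ref{p3.1} that $V_{\widehat{\sl}_2}(1,0)_2\cong\sl_2\oplus\on{Sym}^2(\sl_2)$ as an $\sl_2$-module, with $\ker(L(1))\cap V_{\widehat{\sl}_2}(1,0)_2\cong\on{Sym}^2(\sl_2)$, and that $\on{Sym}^2(\sl_2)$ is the sum of a $5$-dimensional irreducible and the trivial module (the latter corresponding to $\C\,\omega$). At level $1$ the maximal submodule $\mathcal J$ with $L_{\widehat{\sl}_2}(1,0)=V_{\widehat{\sl}_2}(1,0)/\mathcal J$ is generated by the singular vector $e(-1)^2\mathbf 1$; because $h(0)e(-1)^2\mathbf 1=4\,e(-1)^2\mathbf 1$ and $e(0)e(-1)^2\mathbf 1=0$, this vector is an $\sl_2$-highest weight vector of weight $4$ contained in the $\on{Sym}^2(\sl_2)$-summand, so its degree-$2$ component $\mathcal J_2$ is exactly that $5$-dimensional irreducible. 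Hence $(L_{\widehat{\sl}_2}(1,0))_2\cong\sl_2\oplus\C$ and $\ker(L(1))\cap(L_{\widehat{\sl}_2}(1,0))_2\cong\on{Sym}^2(\sl_2)/\mathcal J_2\cong\C$, spanned by the image of $\omega$, as required. (An equivalent route is the Frenkel--Kac realization $L_{\widehat{\sl}_2}(1,0)\cong V_{\Z\alpha}$ with $\langle\alpha,\alpha\rangle=2$, under which $\omega$ becomes the lattice conformal vector $\tfrac14\alpha(-1)^2\mathbf 1$; the degree-$2$ subspace of $V_{\Z\alpha}$ is spanned by $\alpha(-1)^2\mathbf 1$, $\alpha(-2)\mathbf 1$, $\alpha(-1)e^{\alpha}$ and $\alpha(-1)e^{-\alpha}$, and a one-line computation using $L(1)e^{\pm\alpha}=0$, $\alpha(0)e^{\pm\alpha}=\pm2\,e^{\pm\alpha}$ and $L(1)\alpha(-2)\mathbf 1=2\,\alpha(-1)\mathbf 1$ shows that $L(1)$ annihilates precisely the line $\C\,\alpha(-1)^2\mathbf 1$.)

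The only step that is not routine bookkeeping is the identification of $\mathcal J_2$ — that is, verifying that the singular vector $e(-1)^2\mathbf 1$ lies in, and generates, the top $\sl_2$-constituent of $\on{Sym}^2(\sl_2)$ inside $V_{\widehat{\sl}_2}(1,0)_2$, so that $(L_{\widehat{\sl}_2}(1,0))_2\cong\sl_2\oplus\C$; in the lattice incarnation the analogous point is the Frenkel--Kac matching of the affine and lattice conformal vectors. I expect this to be the main obstacle. After it, the inclusion $\on{Sc}(L_{\widehat{\sl}_2}(1,0),\omega)\subseteq\C\,\omega$ is immediate, Remark~\ref{r3.6} gives $\on{Sc}(L_{\widehat{\sl}_2}(1,0),\omega)=\{0,\omega\}$, and Proposition~\ref{p3.10} transports this back to $\on{Sc}(V_{\widehat{\sl}_2}(1,0),\omega)$.
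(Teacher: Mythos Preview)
Your argument is correct and takes a genuinely different route from the paper's. The paper stays inside the matrix-equation framework of Section~4: after reducing to diagonal symmetric matrices via the $\on{SO}_3(\C)$-action, it cites the conformal embedding of the rank-one Heisenberg subalgebra at level~$1$ (\cite[Proposition~13.10]{DL}) to conclude that the diagonal solution $\omega_{A_3}=\tfrac12\alpha_3(-1)^2\mathbf{1}$ coincides with $\omega$, whence the orbit relations and the complementarity $A_{i+3}=\tfrac{1}{6}I-A_i$ force all six diagonal solutions to be $\omega$ or $0$; Proposition~\ref{p3.10} is then invoked to pass to $L_{\widehat{\sl}_2}(1,0)$. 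You instead apply Proposition~\ref{p3.10} first and work entirely in the simple quotient, computing $\ker(L(1))\cap (L_{\widehat{\sl}_2}(1,0))_2$ directly from the $\sl_2$-module decomposition of $\on{Sym}^2(\sl_2)$ into a five-dimensional irreducible and a trivial summand, together with the identification of $\mathcal J_2$ with the former. Your approach is more self-contained (no appeal to \cite{DL}) and is arguably better placed: the identification $\omega_{A_3}=\omega$ that the paper uses is really a statement in $L_{\widehat{\sl}_2}(1,0)$ rather than in the universal object $V_{\widehat{\sl}_2}(1,0)$, so your order of reduction matches where that collapse actually occurs. The paper's route, by contrast, keeps the $\ell=1$ case visibly inside the diagonal-solution picture $A_1,\dots,A_6$ developed for generic $\ell$.
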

\begin{proof}
According to Proposition \ref{p3.1}, for each semi-conformal vector $\omega_A$ of $V_{\widehat{\sl}_2}(1,0)$,
it can be determined by a symmetric
matrix $A$. Since any symmetric matrix can be diagonalized by an orthogonal matrix. Just for
every automorphism $\sigma$ of $V_{\widehat{\sl}_2}(1,0)$, its action on $\omega_A$ will give a semi-conformal
vector $\omega_{oAo^{tr}}$ for  $o\in \on{SO}_3(\C)$. Thus, any semi-conformal vector $\omega_A$
can be changed into a semi-conformal vector determined by a diagonal matrix by using some automorphism of $V_{\widehat{\sl}_2}(1,0)$.
According to \cite[Proposition 13.10]{DL}, two conformal vectors are identified  in vertex operator algebras $(V_{\widehat{\h}}(1,0),\omega_{\h}=\frac{1}{2}\alpha_3(-1)^2\mathbf{1})$  and $(V_{\widehat{\sl}_2}(1,0), \omega)$,  so  $(V_{\widehat{\h}}(1,0),\omega_{\h})\hookrightarrow (V_{\widehat{\sl}_2}(1,0),\omega)$ is a conformal embedding,
that is, $\omega_{A_3}=\omega$, where $V_{\widehat{\h}}(1,0)$ is Heisenberg VOA generated by $\alpha_3$.
Since $\omega_{A_1}=\omega_{M_1A_3M_1^{tr}}$ and $\omega_{A_2}=\omega_{M_2A_3M_2^{tr}}$ for some $M_1,M_2\in \on{SO}_3(\C)$.
Hence $\omega_{A_4}=\omega_{A_5}=\omega_{A_6}=0$.  By  Proposition \ref{p3.10}, $V_{\widehat{\sl}_2}(1,0)$ and $L_{\widehat{\sl}_2}(1,0)$ have both  only two trivial
semi-conformal vectors $\omega$ and $0$.
\end{proof}
\begin{remark}
In general,  one question is to  study the $G$-orbit structure of the variety $\on{Sc}(L_{\widehat{\f{g}}}(1,0),\omega)$ for  the simple affine vertex operator algebra $L_{\widehat{\f{g}}}(1,0)$  associated to the finite dimensional simple Lie algebra $\f{g}$  of $A,D,E$ types.  In these cases,  we note that  $L_{\widehat{\f{g}}}(1,0)\cong V_{Q_{\f{g}}}$ as vertex operator algebras, and then we can study  $G$-orbit informations of $\on{Sc}(L_{\widehat{\f{g}}}(1,0),\omega)$ by  virtue of   the lattice vertex operator algebra $V_{Q_{\f{g}}}$ associated to the root lattice $Q_{\f{g}}$ of $\f{g}$.
\end{remark}
Next, we  consider how to determine the order relation of two semi-conformal
vectors of $(V_{\widehat{\sl}_2}(\ell,0),\omega)
$ by their corresponding symmetric matrices. Since any symmetric matrix can be congruent with a diagonal matrix by a special orthogonal matrix, then we only need to consider the order relation of two diagonal matrices.
\begin{prop}\label{p4.5}
For two diagonal matrices $$
A=\left(
\begin{array}{llll}
&a_1&0&0\\
&0&a_2&0\\
&0&0&a_3
\end{array}
\right),
B=\left(
\begin{array}{llll}
&b_1&0&0\\
&0&b_2&0\\
&0&0&b_3
\end{array}
\right),$$ two vectors
$\omega_A,\omega_B\in \on{Sc}(V_{\widehat{\sl}_2}(\ell,0),\omega)$, then $\omega_A\preceq \omega_B$ if and only if
$A, B$ satisfy the following conditions
\begin{eqnarray}\label{e4.5}
\left\{
\begin{array}{lllll}
2(a_{1}b_{2}+a_{2}b_{1}+a_{1}b_{3}+a_{3}b_{1}-a_{2}b_{3}-a_{3}b_{2})+2\ell a_{1}b_{1}
=a_{1};\\
2(a_{1}b_{2}+a_{2}b_{1}+a_{2}b_{3}+a_{3}b_{2}-a_{1}b_{3}-a_{3}b_{1})+2\ell a_{2}b_{2}=a_2;\\
2(a_{1}b_{3}+a_{3}b_{1}+a_{2}b_{3}+a_{3}b_{2}-a_{1}b_{2}-a_{2}b_{1})+2\ell a_{3}b_{3}
=a_{3};\\
2\ell a_1b_1+4b_1a_2+4b_3a_1-4b_3a_2=2\ell a_1^2+4a_1a_2+4a_1a_3-4a_2a_3;\\
2\ell a_2b_2+4b_2a_1-4b_3a_1+4b_3a_2=2\ell a_2^2+4a_1a_2+4a_2a_3-4a_1a_3;\\
2\ell a_3b_3+4b_3a_1+4b_2a_3-4b_2a_1=2\ell a_3^2+4a_1a_3+4a_2a_3-4a_1a_2;\\
b_1a_2-b_2a_1+b_3a_1-b_1a_3+b_2a_3-b_3a_2=0.
\end{array}
\right.
\end{eqnarray}
\end{prop}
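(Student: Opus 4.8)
The plan is to start from the algebraic description of the partial order $\preceq$ recorded in \cite[Proposition 2.8]{CL}, which characterizes $\omega'\preceq\omega''$ (for $\omega',\omega''\in\on{Sc}(V,\omega)$) by finitely many $n$-th product identities of degree at most two in $\omega',\omega''$; concretely, the extra content beyond $\omega',\omega''\in\on{Sc}(V,\omega)$ is that $\omega''-\omega'$ is again a semi-conformal vector of $(V,\omega)$ and that $\omega'$ and $\omega''-\omega'$ generate commuting copies of the Virasoro algebra. Since $\omega_A$ and $\omega_B$ are assumed to lie in $\on{Sc}(V_{\widehat{\sl}_2}(\ell,0),\omega)$, Theorem~\ref{t3.5} (equivalently the diagonal system \eqref{e4.4}) already takes care of the identities involving only $A$ or only $B$, so what is left to translate are the ``mixed'' identities, those bilinear in $A$ and $B$.

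First I would substitute $\omega_A=\mbf{u}(-1)A\mbf{u}(-1)^{tr}\mbf{1}$ and $\omega_B=\mbf{u}(-1)B\mbf{u}(-1)^{tr}\mbf{1}$ (Proposition~\ref{p3.1}) and repeat the computations behind Propositions~\ref{p3.2}--\ref{p3.4} with one factor $A$ replaced by $B$: using $[L(n),\beta(m)]$ as in \eqref{Ln:comm} and $a(n)b(m)=b(m)a(n)+[a,b](n+m)+n\delta_{n+m,0}\<a,b\>K$, one expresses each relevant product, namely the modes computing $L'(0)\omega_B$ and the $n$-th products $(\omega_A)_n(\omega_B-\omega_A)$ for the small $n$ appearing in \cite[Proposition 2.8]{CL}, in the PBW bases of $V_1$ and $V_2$. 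Specializing to $A=\on{diag}(a_1,a_2,a_3)$ and $B=\on{diag}(b_1,b_2,b_3)$ (so that by Proposition~\ref{p3.1} no $u^s(-2)\mbf{1}$ terms occur) and inserting the explicit structure constants $\gamma_{ij}^s=\sqrt2\mbf{i}$ for cyclic $(ijs)$ from \eqref{e2.3}, the condition ``$\omega_B-\omega_A\in\on{Sc}$'' reduces, after collecting the coefficients of $u^i(-1)^2\mbf{1}$, exactly to the first three equations of \eqref{e4.5} (this is just \eqref{e4.4} applied to $B-A$, then expanded using \eqref{e4.4} for $A$ and for $B$); the $V_2$-valued commuting identities give, from the coefficients of $u^1(-1)^2\mbf{1},u^2(-1)^2\mbf{1},u^3(-1)^2\mbf{1}$ (the off-diagonal coefficients vanishing automatically for diagonal $A,B$), equations four through six, whose right-hand sides collapse to $a_1,a_2,a_3$ once \eqref{e4.4} for $A$ is used; and the $V_1$-valued commuting identity, after using the cyclic symmetry of $\gamma$, collapses to the single last equation. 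Because each step is an ``if and only if'', the system \eqref{e4.5} is equivalent to $\omega_A\preceq\omega_B$, which is the assertion; this derivation also explains the apparent redundancy of \eqref{e4.5} (for instance the first equation minus the fourth is, up to sign, twice the last).

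The step I expect to be the main obstacle is the purely computational bookkeeping: one must single out which identities of \cite[Proposition 2.8]{CL} survive after imposing that \emph{both} $\omega_A$ and $\omega_B$ are semi-conformal, keep careful track of the asymmetric roles of $A$ and $B$ in the mixed brackets (the modes $L'(n)$ built from $\omega_A$ do not commute with the modes building $\omega_B$, so naive symmetrization is not allowed), and verify that restricting to diagonal matrices loses no information, i.e. that the full set of coefficient equations really collapses to the seven in \eqref{e4.5} with no further constraints hidden in the off-diagonal slots or the $u^s(-2)\mbf{1}$ slots. No new idea beyond the methods already used for Propositions~\ref{p3.2}--\ref{p3.4} is needed here, only care with the bilinear computation.
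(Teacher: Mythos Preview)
Your proposal is correct and follows essentially the same approach as the paper: the paper's proof consists of the single sentence ``According to \cite[Proposition 2.8]{CL}, we can verify relations (\ref{e4.5}) by straightly computations,'' and your plan is precisely a fleshed-out version of that computation, invoking the same external characterization of $\preceq$ and the same commutator identities used in Propositions~\ref{p3.2}--\ref{p3.4}. Your added analysis of which identities of \cite[Proposition 2.8]{CL} produce which lines of \eqref{e4.5}, and the observed redundancy among them, goes beyond what the paper records but does not deviate from its method.
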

\begin{proof} According to Proposition \cite[Proposition 2.8] {CL},   we can verify   relations (\ref{e4.5}) by straightly computations .
\end{proof}
Next, we expect to describe $\on{MinSc}(V_{\widehat{\sl}_2}(\ell,0),\omega)$
for  level $\ell\neq -2,1, 0,4$.
\begin{theorem}
For the level  $\ell\neq -2,0, 1,4$, we have
$$\on{MinSc}(V_{\widehat{\sl}_2}(\ell,0))=\on{Orb}_{A_1}\cup \on{Orb}_{A_4}.$$
In particular, when the level $\ell\in \Z_+$ and $\ell\notin 1,4$, we have
$$\on{MinSc}(L_{\widehat{\sl}_2}(\ell,0))=\on{Orb}_{A_1}\cup \on{Orb}_{A_4}.$$\end{theorem}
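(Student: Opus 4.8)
The plan is to combine the explicit list of semi-conformal vectors from Corollary \ref{c4.2} with the commutant geometry around $\omega_{A_1}$ and $\omega_{A_4}$. By Corollary \ref{c4.2}, for $\ell\neq -2,0,1,4$ one has $\on{Sc}(V_{\widehat{\sl}_2}(\ell,0),\omega)=\on{Orb}_{A_1}\cup\on{Orb}_{A_4}\cup\{0\}\cup\{\omega\}$, so it is enough to prove (i) $\omega\notin\on{MinSc}$ and (ii) $\omega_{A_1},\omega_{A_4}\in\on{MinSc}$: by Proposition \ref{p3.9} the latter upgrades to $\on{Orb}_{A_1}\cup\on{Orb}_{A_4}\subseteq\on{MinSc}$, and together with (i) and Corollary \ref{c4.2} this pins down $\on{MinSc}$ exactly. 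Statement (i) is quick: since $\langle\omega\rangle$ contains the conformal vector of $V$, any vector commuting with $Y(\omega,z)$ lies in $\ker L(-1)=\C\mbf{1}$ ($V$ being of CFT type), so $C_V(\langle\omega\rangle)=\C\mbf{1}$ and $U(\omega)=C_V(C_V(\langle\omega\rangle))=V$; hence $\omega'\preceq\omega$ for every $\omega'\in\on{Sc}$, and as $0\neq\omega_{A_1}\neq\omega$ sits strictly below $\omega$, the vector $\omega$ is not minimal.

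For (ii) I would use the Example: $\omega_{A_1}=\frac{1}{2\ell}\alpha_1(-1)^2\mbf{1}$ is the conformal vector of the rank-one Heisenberg subalgebra $H:=V_{\widehat{\h}}(\ell,0)$ generated by $\alpha_1(-1)\mbf{1}$, and $\omega_{A_4}=\omega-\omega_{A_1}$ is the conformal vector of its commutant $K:=C_V(H)$, the $\sl_2$-parafermion vertex operator algebra (one checks $A_4=A_\omega-A_1$ directly). Since $\langle\omega_{A_1}\rangle\subseteq H$ and the double-commutant identity $C_V(C_V(H))=H$ holds for a Heisenberg subalgebra generated by a non-isotropic vector, inclusion-preservation of $U\mapsto C_V(C_V(U))$ gives $U(\omega_{A_1})=C_V(C_V(\langle\omega_{A_1}\rangle))\subseteq H$; symmetrically $K=C_V(H)$ forces $C_V(K)=H$, and then $U(\omega_{A_4})\subseteq C_V(C_V(K))=C_V(H)=K$. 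The next step is to show $\on{Sc}(H,\omega_{A_1})=\{0,\omega_{A_1}\}$ and $\on{Sc}(K,\omega_{A_4})=\{0,\omega_{A_4}\}$. For $H$: $H_1=\C\alpha_1(-1)\mbf{1}$, $H_2$ is two-dimensional, and a short computation gives $L^{A_1}(1)\alpha_1(-2)\mbf{1}=2\alpha_1(-1)\mbf{1}\neq 0$, so $\ker L^{A_1}(1)\cap H_2=\C\omega_{A_1}$. For $K$: a weight-one vector of $V$ commuting with $\alpha_1$ must vanish, so $K_1=0$ and $\ker L(1)\cap K_2=K_2$, while $\dim K_2=1$ by counting the weight-two, $\alpha_1(0)$-charge-zero subspace of $V$ and subtracting the contribution of $H$. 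In both cases the quadratic scaling constraint from \eqref{e3.5} (Remark \ref{r3.6}) removes everything except $0$ and the conformal vector.

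Now (ii) follows: if $0\neq\omega''\preceq\omega_{A_1}$ then $U(\omega'')\subseteq U(\omega_{A_1})$, and since both are semi-conformal subalgebras of $(V,\omega)$ we have $L''(n)=L(n)=L^{A_1}(n)$ on $U(\omega'')$ for all $n\geq 0$; hence $(U(\omega''),\omega'')$ is a semi-conformal subalgebra of $(U(\omega_{A_1}),\omega_{A_1})$, so $\omega''\in\on{Sc}(U(\omega_{A_1}),\omega_{A_1})\subseteq\on{Sc}(H,\omega_{A_1})=\{0,\omega_{A_1}\}$, forcing $\omega''=\omega_{A_1}$. Thus $\omega_{A_1}\in\on{MinSc}$, and the identical argument with $(K,\omega_{A_4})$ in place of $(H,\omega_{A_1})$ gives $\omega_{A_4}\in\on{MinSc}$. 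Finally, for $L_{\widehat{\sl}_2}(\ell,0)$ with $\ell\in\Z_+\setminus\{1,4\}$, Proposition \ref{p3.10} identifies $\on{Sc}(L_{\widehat{\sl}_2}(\ell,0),\omega)$ with $\on{Sc}(V_{\widehat{\sl}_2}(\ell,0),\omega)$, and the whole argument reruns inside $L_{\widehat{\sl}_2}(\ell,0)$ because $H$ and $K$ sit in degrees untouched by the defining maximal submodule.

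The step I expect to be the main obstacle is the structural input in the second paragraph: the double-commutant identity $C_V(C_V(H))=H$ for the rank-one Heisenberg $H$ (equivalently, that $U(\omega_{A_1})$ is exactly $H$), together with the count $\dim K_2=1$ for the parafermion vertex operator algebra. If one prefers to avoid the double-commutant theorem, there is a lighter central-charge alternative for positive integer $\ell$ (the unitary case): $\omega_{A_1}$ has central charge $1$ and $\omega_{A_4}$ has central charge $2(\ell-1)/(\ell+2)$, and any nonzero $\omega''\preceq\omega_{A_1}$ generates a subalgebra of a central-charge-$1$ vertex operator algebra whose complementary commutant has nonnegative central charge, forcing $c(\omega'')=1$ and then $\omega''=\omega_{A_1}$; but this does not obviously cover non-integral levels, so the commutant approach seems to be the robust one.
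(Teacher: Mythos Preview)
Your approach differs substantially from the paper's. The paper never invokes the Heisenberg or parafermion subalgebras at all; instead it uses Proposition~\ref{p4.5}, which writes down explicit polynomial equations~\eqref{e4.5} for the relation $\omega_A\preceq\omega_B$ when $A,B$ are diagonal, and simply plugs in the pairs $(A_1,A_4)$ and $(A_4,A_1)$ to see that the system fails. Combined with Corollary~\ref{c4.2} and the $G$-equivariance of $\preceq$ from Proposition~\ref{p3.9}, this forces both nontrivial orbits into $\on{MinSc}$ with no structural input whatsoever. Your route is more conceptual---trapping $U(\omega_{A_1})$ and $U(\omega_{A_4})$ inside the rank-one Heisenberg $H$ and its parafermion commutant $K$, and then computing $\on{Sc}(H)$ and $\on{Sc}(K)$ directly---but, as you yourself flag, the containment $U(\omega_{A_1})\subseteq H$ rests on the double-commutant identity $C_V(C_V(H))=H$, and this is not a free fact for $V_{\widehat{\sl}_2}(\ell,0)$ at general complex level; establishing it from scratch would cost more than the entire computational proof. (Your auxiliary claim $\dim K_2=1$ is correct and follows from a short direct calculation in $V_2$, so that part is not the bottleneck.) In sum: your argument would go through once the double-commutant input is supplied, and it would then \emph{explain} minimality rather than merely verify it, but the paper's purely algebraic check via~\eqref{e4.5} is the shorter, self-contained path and is what was intended.
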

\begin{proof}
We can check  $\omega_{A_1}$ and $\omega_{A_4}$ both don't satisfy the relations (\ref{e4.5}) in Proposition \ref{p4.5},
so there is not the partial order relation $\preceq$ for the pair $\omega_{A_1}, \omega_{A_4}$.
According to Proposition \ref{p3.9}, we know that all semi-conformal vectors in a same orbit have the same partial order. Then
$\on{Orb}_{A_1},\on{Orb}_{A_4}\in \on{MinSc}(V_{\widehat{\sl}_2}(\ell,0))$. Using Corollary \ref{c4.2}, we have
$\on{MinSc}(V_{\widehat{\sl}_2}(\ell,0))=\on{Orb}_{A_1}\cup \on{Orb}_{A_4}.$
\end{proof}
\section{Semi-conformal vectors of $V_{\widehat{\sl}_2}(4,0)$}
In this section,  we consider semi-conformal vectors of $(V_{\widehat{\sl}_2}(4,0),\omega)$.
According to equations (\ref{e4.4}), for a diagonal matrix $$
A=\left(
\begin{array}{llll}
&a_1&0&0\\
&0&a_2&0\\
&0&0&a_3
\end{array}
\right),  $$
Note that $\omega_{A}\in \on{Sc}(V_{\widehat{\sl}_2}(4,0))$ if and only if
$A$ satisfies
\begin{eqnarray}\label{e5.1}
4(a_1a_2+a_1a_3-a_2a_3)+8a_1^2=a_1;\label{e5.1}\\
4(a_1a_2+a_2a_3-a_1a_3)+8a_2^2=a_2;\label{e5.2}\\
4(a_1a_3+a_2a_3-a_1a_2)+8a_3^2=a_3.\label{e5.3}
\end{eqnarray}

When $A$ satisfies $a_1=a_2=a_3$, the equations (\ref{e5.1})-(\ref{e5.3})
have two solutions $a_1=a_2=a_3=0$ and $a_1=a_2=a_3=\frac{1}{12}$, i.e., 
$A=0$ and $A=\frac{1}{12}I$, where $I$ is the $3\times 3$ identity matrix. They are corresponding to semi-conformal vectors $0$ and $\omega$ of $(V_{\widehat{\sl}_2}(4,0))$, respectively.
\begin{prop}\label{p5.1}
When $a_1,a_2, a_3$ don't satisfy $a_1=a_2=a_3$, 
the set of solutions of the equations (\ref{e5.1})-(\ref{e5.3}) is equal to 
\begin{equation}\label{e5.4}
\{(a_1,a_2,a_3)\in \C^3|a_1+a_2+a_3=\frac{1}{8}~and ~a_1a_2+a_1a_3+a_2a_3=0\}.
\end{equation}
\end{prop}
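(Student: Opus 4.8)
The strategy is to extract the two symmetric relations in \eqref{e5.4} from the system \eqref{e5.1}--\eqref{e5.3} by forming pairwise differences and the total sum, and then to check conversely that those two relations force \eqref{e5.1}--\eqref{e5.3}. Throughout, set $p=a_1+a_2+a_3$ and $q=a_1a_2+a_1a_3+a_2a_3$. Subtracting \eqref{e5.2} from \eqref{e5.1}, the degree-two cross terms combine as $4(a_1a_2+a_1a_3-a_2a_3)-4(a_1a_2+a_2a_3-a_1a_3)=8a_3(a_1-a_2)$, while $8a_1^2-8a_2^2=8(a_1+a_2)(a_1-a_2)$, so the difference becomes $(a_1-a_2)(8p-1)=0$; the analogous computations for the other two pairs give $(a_2-a_3)(8p-1)=0$ and $(a_1-a_3)(8p-1)=0$. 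Since $a_1,a_2,a_3$ are not all equal, some $a_i-a_j\neq 0$, and therefore $8p-1=0$, i.e. $a_1+a_2+a_3=\frac18$.

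Next I would add \eqref{e5.1}, \eqref{e5.2}, \eqref{e5.3}. The cross terms add up to $4q$, the squares to $8(a_1^2+a_2^2+a_3^2)=8(p^2-2q)$, and the right sides to $p$, so $8p^2-12q=p$. Inserting $p=\frac18$ yields $\frac18-12q=\frac18$, hence $q=0$. This proves that any solution of \eqref{e5.1}--\eqref{e5.3} with $a_1,a_2,a_3$ not all equal satisfies $a_1+a_2+a_3=\frac18$ and $a_1a_2+a_1a_3+a_2a_3=0$, i.e. lies in the set \eqref{e5.4}.

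For the converse, suppose $p=\frac18$ and $q=0$. Fix $i$ and let $\{i,j,k\}=\{1,2,3\}$. From $q=0$ we get $a_ja_k=-a_i(a_j+a_k)=-a_i(p-a_i)=a_i^2-\frac{a_i}{8}$, so $a_i^2-a_ja_k=\frac{a_i}{8}$. The left-hand side of the $i$-th equation of \eqref{e5.1}--\eqref{e5.3} equals $4(a_ia_j+a_ia_k-a_ja_k)+8a_i^2=4(q-2a_ja_k)+8a_i^2=8(a_i^2-a_ja_k)=a_i$, which is its right-hand side; since $i$ was arbitrary, all three equations hold. Finally, \eqref{e5.4} contains no point with $a_1=a_2=a_3$: such a point $a_1=a_2=a_3=t$ would require $3t=\frac18$ and $3t^2=0$ simultaneously, which is impossible. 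Hence \eqref{e5.4} is precisely the solution set of \eqref{e5.1}--\eqref{e5.3} under the standing hypothesis, as claimed.

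The entire argument is a short symmetric-function manipulation; the one spot deserving care is the first step, where one must be sure that ``$a_1,a_2,a_3$ not all equal'' (rather than pairwise distinct) already suffices to kill the factor $8p-1$: if, say, $a_1=a_2$, then $a_3$ differs from both, so the relation $(a_1-a_3)(8p-1)=0$ still has a nonzero linear factor and forces $p=\frac18$.
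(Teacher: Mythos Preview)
Your proof is correct and follows essentially the same route as the paper: form pairwise differences of \eqref{e5.1}--\eqref{e5.3} to extract $a_1+a_2+a_3=\tfrac18$, then sum the three equations to get $a_1a_2+a_1a_3+a_2a_3=0$, and finally check the converse. Your version is in fact a bit cleaner: the paper treats the case ``exactly two of $a_1,a_2,a_3$ equal'' separately by explicit enumeration, whereas you observe directly that ``not all equal'' already makes some factor $a_i-a_j$ nonvanishing, and you supply the explicit converse computation (and the check that no diagonal point lies in \eqref{e5.4}) that the paper only asserts.
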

\begin{proof}
We solve  the equations (\ref{e5.1})-(\ref{e5.3}) in following cases:
\begin{itemize}
\item[1)]  If only  two numbers of $a_1, a_2,a_3$ are same, we can suppose that 
$a_1=a_2\neq a_3$,  the equations  (\ref{e5.1}) and (\ref{e5.2}) both become
$ 12a_1^2=a_1$, then $a_1=0$ or $a_1=\frac{1}{12}$. 

When  $a_1=0$, by the equation 
(\ref{e5.3}), we have $a_3=\frac{1}{8}$.  So we get a solution $a_1=a_2=0,a_3=\frac{1}{8}$. Since any permutation of $(a_1,a_2,a_3)$ give a solution
of  the equations  (\ref{e5.1}) and (\ref{e5.3}), then $a_1=a_3=0,a_2=\frac{1}{8}$ and 
$a_2=a_3=0,a_1=\frac{1}{8}$ are both solutions of  the equations  (\ref{e5.1}) and (\ref{e5.3}) in this case.

When $a_1=\frac{1}{12}$, by the equation 
(\ref{e5.3}), we have $8a_3^2-\frac{1}{3}a_3-\frac{1}{36}=0$, so $a_3=\frac{1}{12}$ or
$-\frac{1}{24}$. In this case, $a_3=-\frac{1}{24}$. Hence we get a solution $a_1=a_2=\frac{1}{12}, a_3=-\frac{1}{24}$. Since any permutation of $(a_1,a_2,a_3)$ give a solution
of  the equations  (\ref{e5.1}) and (\ref{e5.3}), then $a_1=a_3=\frac{1}{12}, a_2=-\frac{1}{24}$ and $a_2=a_3=\frac{1}{12}, a_1=-\frac{1}{24}$ are another two solutions of  the equations  (\ref{e5.1}) and (\ref{e5.3}) in this case.
\item[2)] Assume that $a_1, a_2 $ and $ a_3 $ are all distinct.  Note that $a_1\neq a_2$, when  (\ref{e5.1}) minus   (\ref{e5.2}), we have $$8(a_1-a_2)(a_1+a_2+a_3)=a_1-a_2,$$
so we get 
\begin{equation}\label{e5.5}
a_1+a_2+a_3=\frac{1}{8}.
\end{equation}
Adding (\ref{e5.1}) and (\ref{e5.2}) to (\ref{e5.3}), 
 we get $$8(a_1a_2+a_1a_3+a_2a_3)+8(a_1^2+a_2^2+a_3^2)=a_1+a_2+a_3,$$
 i.e.,
$$8(a_1+a_2+a_3)^2-8(a_1a_2+a_1a_3+a_2a_3)=a_1+a_2+a_3.$$
Using the relation (\ref{e5.5}), we get 
\begin{equation}\label{e5.6}
a_1a_2+a_1a_3+a_2a_3=0.
\end{equation}
\end{itemize}

The solutions of the equations (\ref{e5.1})-(\ref{e5.3}) given in case 1)  all satisfy   relations (\ref{e5.5}) and (\ref{e5.6}).

Conversely,  when $a_1,a_2, a_3$ don't satisfy $a_1=a_2=a_3$, using relations (\ref{e5.5}) and (\ref{e5.6}), we can also get the equations (\ref{e5.1})-(\ref{e5.3}).
\end{proof}
Let $S_3$ be the permutation group consisting of permutations of $(a_1,a_2,a_3)$.
We have 

\begin{theorem}\label{t5.3}
$$\on{Sc}(V_{\widehat{\sl}_2}(4,0),\omega)=\{0\}\cup\{\omega\}\cup\bigcup_{A\in T}Orb_{\omega_A},$$ where 
$T=\left\{
A=\left(
\begin{array}{llll}
&a_1&0&0\\
&0&a_2&0\\
&0&0&a_3
\end{array}
\right)\Bigg|
\begin{array}{lll} non-scalar~matrix~A~ satisfies \\
~the~equations~(\ref{e5.5}) ~and ~(\ref{e5.6})
\end{array}\right\}\Bigg/S_3.$
\end{theorem}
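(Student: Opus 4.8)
The plan is to combine Theorem~\ref{t4.1} with Proposition~\ref{p5.1} and then handle the $\on{SO}_3(\C)$-orbit (hence $\on{PSL}_2(\C)$-orbit) bookkeeping carefully. By Theorem~\ref{t4.1} applied to $\ell=4$, every $\omega_B\in\on{Sc}(V_{\widehat{\sl}_2}(4,0),\omega)$ is of the form $\omega_{oAo^{tr}}$ where $A$ is a \emph{diagonal} solution of the equations \eqref{e4.4} (which for $\ell=4$ are exactly \eqref{e5.1}--\eqref{e5.3}) and $o\in\on{SO}_3(\C)$; conversely all such $\omega_{oAo^{tr}}$ lie in $\on{Sc}(V_{\widehat{\sl}_2}(4,0),\omega)$. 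Thus $\on{Sc}(V_{\widehat{\sl}_2}(4,0),\omega)=\bigcup_{A}\on{Orb}_{\omega_A}$, the union taken over all diagonal solutions $A$ of \eqref{e5.1}--\eqref{e5.3}.

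Next I would split the set of diagonal solutions according to whether $A$ is scalar. By the remark preceding Proposition~\ref{p5.1}, the scalar solutions are exactly $A=0$ and $A=\tfrac1{12}I$, contributing the two trivial orbits $\{0\}$ and $\{\omega\}$ (these are genuinely fixed by all of $\on{SO}_3(\C)$, since scalar matrices commute with everything, so each is a one-point orbit). For the non-scalar diagonal solutions, Proposition~\ref{p5.1} identifies the solution set precisely with $\{(a_1,a_2,a_3)\in\C^3 : a_1+a_2+a_3=\tfrac18,\ a_1a_2+a_1a_3+a_2a_3=0\}$ minus the scalar point, i.e. with the set of diagonal matrices satisfying \eqref{e5.5} and \eqref{e5.6} that are not scalar. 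Hence $\on{Sc}(V_{\widehat{\sl}_2}(4,0),\omega)=\{0\}\cup\{\omega\}\cup\bigcup_{A}\on{Orb}_{\omega_A}$ where $A$ ranges over this non-scalar solution set.

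The final point, and the only one requiring a small argument, is that the index set may be taken to be the solution set modulo $S_3$ rather than the full solution set: I would show that if $A'$ is obtained from $A$ by permuting the diagonal entries, then $\on{Orb}_{\omega_{A'}}=\on{Orb}_{\omega_A}$. This is because any permutation matrix lies in $\on{O}_3(\C)$, and one can always choose a representative in $\on{SO}_3(\C)$ (multiply an odd permutation matrix by $-I$, which is in $\on{SO}_3(\C)$ since $\det(-I)=-1$ in dimension $3$\,---\,wait, $\det(-I)=(-1)^3=-1$, so instead negate a single coordinate via a matrix of determinant $-1$; more cleanly, a transposition matrix times $\mathrm{diag}(1,1,-1)$ has determinant $+1$ and conjugation by $\mathrm{diag}(1,1,-1)$ fixes any diagonal matrix), so $A$ and $A'$ are $\on{SO}_3(\C)$-congruent and by Proposition~\ref{p3.8} the associated vectors lie in one $\on{PSL}_2(\C)$-orbit. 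Conversely, two diagonal matrices that are \emph{not} related by a permutation have distinct multisets of eigenvalues and so are not $\on{SO}_3(\C)$-congruent to one another, which shows the union over $T$ is genuinely indexed by $S_3$-orbits. Assembling these three pieces gives $\on{Sc}(V_{\widehat{\sl}_2}(4,0),\omega)=\{0\}\cup\{\omega\}\cup\bigcup_{A\in T}\on{Orb}_{\omega_A}$ with $T$ as stated. The main obstacle is purely bookkeeping: being careful that the $S_3$-action is implemented inside $\on{SO}_3(\C)$ (not merely $\on{O}_3(\C)$) so that Proposition~\ref{p3.8}'s congruence-by-$\on{Aut}(\f{sl}_2)=\on{SO}_3(\C)$ applies, and checking that distinct $S_3$-orbits really do give distinct $\on{PSL}_2(\C)$-orbits so that the decomposition is not redundant.
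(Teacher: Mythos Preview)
Your proof is correct and follows essentially the same line as the paper's own argument: reduce to diagonal solutions via Theorem~\ref{t4.1}, separate the scalar solutions $A=0$ and $A=\tfrac1{12}I$ from the non-scalar ones handled by Proposition~\ref{p5.1}, and then observe that $S_3$-related diagonals lie in one $\on{SO}_3(\C)$-orbit. You are in fact somewhat more careful than the paper, which simply asserts the $S_3$-identification of orbits without checking that the permutation can be realized inside $\on{SO}_3(\C)$ and does not discuss whether distinct elements of $T$ give distinct orbits (the theorem as stated does not require disjointness, so that last point is a bonus rather than a gap).
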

\begin{proof}
When $A$ is a scalar matrix as a solution of the equations (\ref{e5.1})-(\ref{e5.3}), we know that $A=0$ or $A=\frac{1}{12}I$, they are corresponding to semi-conformal vectors 
$0$ and $\omega$ of $(V_{\widehat{\sl}_2}(4,0),\omega)$, respectively. Hence they give two orbits $\{0\}$ and $\{\omega\}$ of $\on{Sc}(V_{\widehat{\sl}_2}(4,0),\omega)$.

When  $A$ is not a scalar matrix as a solution of the equations (\ref{e5.1})-(\ref{e5.3}),
by Proposition \ref{p5.1},  if and only if $A$ is a solution of the equations (\ref{e5.5}) and (\ref{e5.6}). For any solution $A$ of  the equations (\ref{e5.1})-(\ref{e5.3}), a matrix $B$ given by the action on $A$ of any permutation of $(a_1,a_2,a_3)$ is still a solution of  the equations (\ref{e5.1})-(\ref{e5.3}).  Moreover, $B$ determines  the same orbit with $A$ in $\on{Sc}(V_{\widehat{\sl}_2}(4,0),\omega)$. Hence, we can classify the set of  solutions of the equations (\ref{e5.5}) and (\ref{e5.6}) by the action  of $S_3$. Denote this classified set 
by $T$. Each element $A$ in $T$ determines a unique  orbit $Orb_{\omega_A}$ of $\on{Sc}(V_{\widehat{\sl}_2}(4,0),\omega)$.

Thus, we get the decomposition of orbits of $\on{Sc}(V_{\widehat{\sl}_2}(4,0),\omega)$.
\end{proof}

As a  result of Theorem \ref{t5.3}, we can get immediately

\begin{cor}\label{c5.4}
$$\on{MinSc}(V_{\widehat{\sl}_2}(4,0),\omega)=\bigcup_{A\in T}Orb_{\omega_A},$$ where $T$ is the same as  in Theorem \ref{t5.3}.
\end{cor}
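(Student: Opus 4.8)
The plan is to read everything off from the orbit decomposition in Theorem~\ref{t5.3}, together with two structural facts about the partial order $\preceq$ on $\on{Sc}(V_{\widehat{\sl}_2}(4,0),\omega)$. First, $\omega$ is the maximum: since a vector $v$ with $\omega_n v=0$ for all $n\ge 0$ satisfies $L(m)v=0$ for all $m\ge -1$ and hence lies in $V_0=\C\mbf{1}$, we get $C_V(\langle\omega\rangle)=\C\mbf{1}$ and therefore $U(\omega)=C_V(C_V(\langle\omega\rangle))=V$; consequently $U(\omega')\subseteq V=U(\omega)$, i.e.\ $\omega'\preceq\omega$, for every $\omega'\in\on{Sc}(V_{\widehat{\sl}_2}(4,0),\omega)$. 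Second, if $\omega'\preceq\omega''$ are semi-conformal vectors of $(V,\omega)$, then $\omega''-\omega'\in\on{Sc}(V,\omega)$: indeed $\langle\omega'\rangle\subseteq U(\omega'')$ and both $(U(\omega''),\omega'')$ and $\omega'$ are semi-conformal relative to $(V,\omega)$, so $\omega'$ is semi-conformal inside $(U(\omega''),\omega'')$; then $\omega''-\omega'$ is the conformal vector of the commutant $C_{U(\omega'')}(\langle\omega'\rangle)$ and is semi-conformal in $(U(\omega''),\omega'')$ by the involution construction of Section~2 (cf.\ \cite{CL}), and composing with the semi-conformal inclusion $U(\omega'')\hookrightarrow V$ gives the claim. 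I will also use that $\preceq$ is a genuine partial order (antisymmetry follows from \cite[Lemma~5.1]{JL2}) and that $\on{MinSc}$ is $\on{Aut}$-stable (Proposition~\ref{p3.9}).

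Granting these, I would first check that $\omega$ is not minimal. The set $T$ is nonempty — for instance $(a_1,a_2,a_3)=(\tfrac{1}{8},0,0)$ satisfies (\ref{e5.5}) and (\ref{e5.6}) and is non-scalar — so $\bigcup_{A\in T}\on{Orb}_{\omega_A}\neq\emptyset$; any such $\omega_A$ is nonzero, is $\neq\omega$ (its matrix is non-scalar while $\omega$ corresponds to $\tfrac{1}{12}I$), and satisfies $\omega_A\preceq\omega$ by the first fact, whence $\omega\notin\on{MinSc}(V_{\widehat{\sl}_2}(4,0),\omega)$. It then remains to prove that every $\omega_A$ with $A\in T$ is minimal. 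So let $0\neq\omega''\in\on{Sc}(V_{\widehat{\sl}_2}(4,0),\omega)$ with $\omega''\preceq\omega_A$. By Theorem~\ref{t5.3} either $\omega''=\omega$ or $\omega''\in\on{Orb}_{\omega_D}$ for some $D\in T$. The case $\omega''=\omega$ is impossible: $\omega\preceq\omega_A$ together with $\omega_A\preceq\omega$ would force $\omega=\omega_A$ by antisymmetry of $\preceq$, contradicting that $A$ is non-scalar.

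In the remaining case, the second fact yields $\omega_A-\omega''\in\on{Sc}(V_{\widehat{\sl}_2}(4,0),\omega)$, and under Proposition~\ref{p3.1} this vector corresponds to the symmetric matrix $A-B$, where $B$ is the matrix of $\omega''$. Since $B$ is $\on{SO}_3(\C)$-conjugate to a diagonal matrix in $T$, we have $\tr(B)=\tfrac{1}{8}=\tr(A)$, so $\tr(A-B)=0$; equivalently, by (\ref{e3.7}) the central charge of $\omega_A-\omega''$ is $0$. But in the decomposition of Theorem~\ref{t5.3} the matrix trace equals $0$ for $0$, equals $\tfrac{1}{4}$ for $\omega$, and equals $\tfrac{1}{8}$ throughout $\bigcup_{D\in T}\on{Orb}_{\omega_D}$, so the only semi-conformal vector whose matrix has trace $0$ is $0$ itself. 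Hence $A-B=0$, i.e.\ $\omega''=\omega_A$, proving $\omega_A\in\on{MinSc}$ and therefore $\on{Orb}_{\omega_A}\subseteq\on{MinSc}(V_{\widehat{\sl}_2}(4,0),\omega)$ by Proposition~\ref{p3.9}. Combining the two steps gives $\on{MinSc}(V_{\widehat{\sl}_2}(4,0),\omega)=\bigcup_{A\in T}\on{Orb}_{\omega_A}$. The step I expect to need the most care is the second structural fact — that the difference of two $\preceq$-comparable semi-conformal vectors is again semi-conformal — and then the bookkeeping identifying $\tr(A-B)$ with the forbidden stratum of Theorem~\ref{t5.3}; the rest is formal. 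As a sanity check, the same argument shows that for $A,B\in T$ with $\omega_B\preceq\omega_A$ one must have $\omega_B=\omega_A$, so distinct orbits coming from $T$ are pairwise $\preceq$-incomparable, consistent with all of them being minimal.
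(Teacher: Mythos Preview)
Your proof is correct and takes a genuinely different route from the paper's. The paper argues directly via Proposition~\ref{p4.5}: for any two diagonal matrices $A,B$ satisfying (\ref{e5.5})--(\ref{e5.6}) it checks that the explicit system (\ref{e4.5}) fails, so no $\preceq$-relation can hold between $\omega_A$ and $\omega_B$, and then invokes Propositions~\ref{p3.8}--\ref{p3.9} to pass to orbits. Your argument instead exploits the structural fact that $\omega'\preceq\omega''$ forces $\omega''-\omega'\in\on{Sc}(V,\omega)$, together with the observation that the trace (equivalently the central charge, by (\ref{e3.7})) separates the three strata $\{0\}$, $\{\omega\}$, $\bigcup_{A\in T}\on{Orb}_{\omega_A}$ of Theorem~\ref{t5.3}: since any $\omega''\preceq\omega_A$ coming from $T$ has $\tr(B)=\tfrac{1}{8}=\tr(A)$, the difference has trace $0$ and must vanish. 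This is more conceptual and has the practical advantage that it never requires $A$ and $B$ to be simultaneously diagonal, a point the paper's reduction ``we need only to consider semi-conformal vectors determined by diagonal matrices'' leaves implicit (one can diagonalize $\omega''$ or $\omega_A$ by an element of $\on{SO}_3(\C)$, but not both at once, so Proposition~\ref{p4.5} does not literally apply to an arbitrary $\omega''\preceq\omega_A$). The paper's approach, on the other hand, is self-contained within the computational framework already set up and does not need the auxiliary lemma about differences of comparable semi-conformal vectors; it also yields, as a by-product, the stronger statement that no two \emph{diagonal} members of $T$ are comparable, which your argument recovers only a posteriori.
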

\begin{proof}
According to Proposition \ref{p3.8} and Proposition \ref{p3.9}, we need only to consider semi-conformal vectors determined by diagonal matrices.  Given $$
A=\left(
\begin{array}{llll}
&a_1&0&0\\
&0&a_2&0\\
&0&0&a_3
\end{array}
\right),
B=\left(
\begin{array}{llll}
&b_1&0&0\\
&0&b_2&0\\
&0&0&b_3
\end{array}
\right)$$ such that $\omega_A,\omega_B\in \on{Sc}(V_{\widehat{\sl}_2}(4,0),\omega)$ and they are not equal to $0, \omega$.
Then $$a_1+a_2+a_3=\frac{1}{8}; a_1a_2+a_1a_3+a_2a_3=0~and ~b_1+b_2+b_3=\frac{1}{8};b_1b_2+b_1b_3+b_2b_3=0.$$
According to Proposition \ref{p4.5}, we can check that $A,B$ don't satisfy the relations (\ref{e4.5}), hence $\omega_A$ and $\omega_B$ don't have partial order relation, so $\on{MinSc}(V_{\widehat{\sl}_2}(4,0),\omega)$ consists of nontrivial semi-conformal vectors of $V_{\widehat{\sl}_2}(4,0)$.
\end{proof}

\begin{cor} The set $T$ in Corollary \ref{c5.4} is an algebraic variety isomorphic to the affine line$ \mathbb{A}^1$.  There are infinitely many $\on{SO}_3(\mathbb{C})$-orbits in $ \on{MinSc}(L_{\widehat{\sl}_2}(4,0),\omega)$.
\end{cor}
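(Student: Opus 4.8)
The plan is to recognise $T$ explicitly as the affine line parametrised by the third elementary symmetric function of $(a_1,a_2,a_3)$, and then to separate the $\on{SO}_3(\C)$-orbits by the multiset of eigenvalues. First I would recall, from Proposition~\ref{p5.1}, Theorem~\ref{t5.3} and Corollary~\ref{c5.4}, that $T=C/S_3$, where $C\subseteq\C^3$ is the affine curve cut out by $e_1-\frac{1}{8}$ and $e_2$, with $e_1=a_1+a_2+a_3$ and $e_2=a_1a_2+a_1a_3+a_2a_3$. Observe that $C$ contains no scalar triple: if $a_1=a_2=a_3=a$, then $e_1=\frac{1}{8}$ forces $a=\frac{1}{24}$, whence $e_2=3a^2=\frac{1}{192}\neq 0$. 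Hence the ``non-scalar'' clause in the definition of $T$ is vacuous, and every $A\in T$ gives an orbit $\on{Orb}_{\omega_A}$ different from $\{0\}$ and $\{\omega\}$.

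To identify $T$ with $\mathbb{A}^1$ I would invoke the fundamental theorem of symmetric polynomials: $\C[a_1,a_2,a_3]^{S_3}=\C[e_1,e_2,e_3]$ is a polynomial algebra, so the quotient morphism identifies $\C^3/S_3$ with $\on{Spec}\C[e_1,e_2,e_3]\cong\mathbb{A}^3$ in the coordinates $e_1,e_2,e_3$. Since the ideal $(e_1-\frac{1}{8},\,e_2)$ is generated by invariants, the Reynolds operator (valid in characteristic $0$) gives $\big(\C[a_1,a_2,a_3]/(e_1-\frac{1}{8},e_2)\big)^{S_3}=\C[e_1,e_2,e_3]/(e_1-\frac{1}{8},e_2)=\C[e_3]$, so $T=C/S_3=\on{Spec}\C[e_3]\cong\mathbb{A}^1$; concretely the isomorphism assigns to $t\in\mathbb{A}^1$ the multiset of roots of $x^3-\frac{1}{8}x^2-t$.

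For the orbit count, since $4\in\Z_+$, Proposition~\ref{p3.10} gives $\on{Sc}(L_{\widehat{\sl}_2}(4,0),\omega)=\on{Sc}(V_{\widehat{\sl}_2}(4,0),\omega)$, hence $\on{MinSc}(L_{\widehat{\sl}_2}(4,0),\omega)=\on{MinSc}(V_{\widehat{\sl}_2}(4,0),\omega)=\bigcup_{A\in T}\on{Orb}_{\omega_A}$ by Corollary~\ref{c5.4}. It remains to see that $A\neq B$ in $T$ give $\on{Orb}_{\omega_A}\neq\on{Orb}_{\omega_B}$. Because $o^{tr}=o^{-1}$ for $o\in\on{SO}_3(\C)$, the action $A\mapsto oAo^{tr}$ is conjugation, so two diagonal matrices lie in the same orbit iff they are $\on{SO}_3(\C)$-conjugate; conjugation by $\on{GL}_3(\C)$ already preserves the eigenvalue multiset, and conversely every permutation of the diagonal entries is realised inside $\on{SO}_3(\C)$ (for instance, the permutation matrix exchanging the first two coordinates, multiplied by $\on{diag}(1,1,-1)$, has determinant $1$ and conjugates $\on{diag}(a_1,a_2,a_3)$ to $\on{diag}(a_2,a_1,a_3)$). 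Hence the $\on{SO}_3(\C)$-orbits in $\on{MinSc}(L_{\widehat{\sl}_2}(4,0),\omega)$ are in bijection with the points of $T\cong\mathbb{A}^1$, so there are infinitely many of them. The only step requiring real care is this last one: one must use that $\on{SO}_3(\C)$, and not merely $\on{O}_3(\C)$, already realises every permutation of the eigenvalues, so that no two of the orbits $\on{Orb}_{\omega_A}$ collapse; everything else is a routine application of invariant theory.
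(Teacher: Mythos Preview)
Your proof is correct and follows essentially the same approach as the paper: both identify $\C^3/S_3$ with $\mathbb{A}^3$ via the elementary symmetric polynomials $e_1,e_2,e_3$ and then observe that the constraints $e_1=\tfrac{1}{8}$, $e_2=0$ cut out an affine line parametrised by $e_3$. Your version is in fact more careful than the paper's, explicitly verifying that the non-scalar clause is vacuous, invoking the Reynolds operator to justify passing to invariants modulo an invariant ideal, and checking that every transposition of diagonal entries is realised inside $\on{SO}_3(\C)$ rather than merely $\on{O}_3(\C)$.
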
 
\begin{proof} Note that two diagonal matrices $A$ and $B$ are in the same  $\on{SO}(\mathbb{C})$-orbit if any only they are equal up to permutation of the diagonal entries.  Consider the affine variety, $\mathbb{C}^3$, the set $\mathbb{C}^3/S_3$ is identified by the affine variety with the coordinate algebra being the algebra of symmetric polynomials $ \mathbb{C}[ x_1, x_2, x]^{S_3}=\mathbb{C}[e_1, e_2, e_3]$, which is a polynomial algebra. Here $ e_1, e_2, $ and $e_3$ are the elementary symmetric polynomials. Then the set $\on{MinSc}(L_{\widehat{\sl}_2}(4,0),\omega)/G=T$
which is defined by the equations $ e_\frac{1}{8}$ and $ e_2=0$. In particular, $ T$ is an algebraic variety isomorphic to the affine line $\mathbb{A}^1$. 
\end{proof}

\begin{remark}
Comparing to the cases of $ \ell \neq -2,0,1$,  we know that $\on{MinSc}(V_{\widehat{\sl}_2}(4,0),\omega)$ can be expressed as the union of
infinite $\on{SO}_3(\C)$-orbits, i.e.,
$$\on{MinSc}(V_{\widehat{\sl}_2}(4,0),\omega)=\bigcup_{\omega_A\in T}\on{Orb}_{\omega_A},$$
where
$\on{Orb}_{\omega_A}=\{\omega_{oAo^{tr}}|\; o\in \on{SO}_3(\C)\}$ and 
$T$ is the same as  in Theorem \ref{t5.3}.
\end{remark}
\begin{remark}
By Proposition \ref{p3.10}, we know that $$ \on{Sc}(L_{\widehat{\sl}_2}(4,0),\omega)=\on{Sc}(V_{\widehat{\sl}_2}(4,0),\omega)$$
and
$$\on{MinSc}(L_{\widehat{\sl}_2}(4,0),\omega)
=\on{MinSc}(V_{\widehat{\sl}_2}(4,0),\omega).$$
\end{remark}
The following result is obtained from above theorem immediately.
\begin{cor}
For the level $\ell\neq -2,0,1$, there exists a maximal chain of partial order in $\on{Sc}(V_{\widehat{\sl}_2}(\ell,0),\omega)$ (resp. $
\on{Sc}(L_{\widehat{\sl}_2}(\ell,0),\omega)$ for $\ell\in \Z_+$ and $\ell\neq 1$)
$$0=\omega^0\prec\omega^1\prec \omega^2=\omega$$ with the length $2$.
\end{cor}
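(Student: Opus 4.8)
The plan is to exhibit, for each relevant level $\ell$, an explicit increasing chain $0 = \omega^0 \prec \omega^1 \prec \omega^2 = \omega$ in $\on{Sc}(V_{\widehat{\sl}_2}(\ell,0),\omega)$, and then to argue that no chain of length greater than $2$ can exist. The natural candidate for $\omega^1$ is the conformal vector $\omega_{\h}$ of the Heisenberg subalgebra $V_{\widehat{\h}}(\ell,0)$ generated by $\alpha_3$, namely $\omega_{\h}=\frac{1}{2\ell}\alpha_3(-1)^2\mbf{1}$, with corresponding symmetric matrix $A_{\h}$ (this is $\on{Orb}_{A_1}$ in the notation of Section 4). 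First I would verify that $0 \prec \omega_{\h}$: the zero vector corresponds to the trivial subalgebra $\C\mbf{1}$, which is contained in every semi-conformal subalgebra, so $U(0)=\C\mbf{1}\subseteq U(\omega_{\h})$ and the relation holds by \cite[Definition 2.7]{CL}. Next I would verify $\omega_{\h}\prec\omega$: since $V_{\widehat{\h}}(\ell,0)=U(\omega_{\h})$ sits inside $V_{\widehat{\sl}_2}(\ell,0)$ and the full algebra is $U(\omega)$, we get $U(\omega_{\h})\subseteq U(\omega)$, hence $\omega_{\h}\preceq\omega$; and $\omega_{\h}\neq 0,\omega$ since $A_{\h}$ is neither $0$ nor the scalar matrix $\frac{1}{2(\ell+2)}I$ (this uses $\ell\neq -2,0$). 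So the chain is strictly increasing of length $2$. For $L_{\widehat{\sl}_2}(\ell,0)$ with $\ell\in\Z_+$, $\ell\neq 1$, Proposition \ref{p3.10} gives $\on{Sc}(L_{\widehat{\sl}_2}(\ell,0),\omega)=\on{Sc}(V_{\widehat{\sl}_2}(\ell,0),\omega)$, so the same chain works verbatim.

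To see maximality — that the length cannot exceed $2$ — I would invoke \cite[Lem. 5.1]{JL2} together with the orbit decomposition already established. A strictly increasing chain of length $3$ would require three distinct nontrivial semi-conformal vectors strictly between $0$ and $\omega$, hence at least two genuinely distinct nonzero proper ones with a strict order between them. By Corollary \ref{c4.2} (for $\ell\neq -2,0,1,4$) the only nontrivial orbits are $\on{Orb}_{A_1}$ and $\on{Orb}_{A_4}$, and the argument in the proof of the preceding theorem shows $\omega_{A_1}$ and $\omega_{A_4}$ satisfy no partial-order relation (neither satisfies the relations \eqref{e4.5}); moreover Proposition \ref{p3.9} shows two elements of the same orbit are never strictly comparable, since $\sigma$ preserves $\preceq$ and an automorphism cannot move a vector strictly above itself. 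Hence between $0$ and $\omega$ there is room for exactly one strict step, and the chain $0\prec\omega_{A_1}\prec\omega$ (or $0\prec\omega_{A_4}\prec\omega$) is maximal of length $2$. For $\ell=4$ one argues the same way using Corollary \ref{c5.4}: every nontrivial semi-conformal vector lies in $\on{MinSc}$, so no two of them are strictly comparable, and again only length-$2$ chains occur between $0$ and $\omega$.

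The main obstacle I anticipate is the verification that $\omega_{\h}\preceq\omega$ and, more subtly, that the two nontrivial orbits are \emph{both} minimal and \emph{incomparable} with each other — i.e. that there is genuinely no refinement of the chain through a vector in the other orbit. This reduces, via Proposition \ref{p4.5}, to checking that the diagonal representatives $A_1$ and $A_4$ fail the matrix system \eqref{e4.5} in both orders $(A_1,A_4)$ and $(A_4,A_1)$; this is a finite but slightly tedious computation, and it is the only place where the specific numerical form of $A_1,A_4$ (and hence the hypothesis $\ell\neq -2,0,1,4$) is really used. Everything else is formal: transitivity and well-definedness of $\preceq$, the containment $\C\mbf{1}\subseteq U(\omega')\subseteq V$ for any $\omega'$, and the $G$-equivariance from Proposition \ref{p3.9}.
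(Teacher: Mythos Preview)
Your proposal is correct and follows exactly the route the paper intends: the corollary is stated there without proof (``obtained from above theorem immediately''), and your argument is the natural unpacking --- exhibit $0\prec\omega_{\h}\prec\omega$ explicitly, then use the $\on{MinSc}$ descriptions (the theorem preceding Corollary~\ref{c4.2} for $\ell\neq 4$, Corollary~\ref{c5.4} for $\ell=4$) to see that every nonzero proper semi-conformal vector is minimal, which immediately caps chain length at~$2$.

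One small remark: your side argument that two elements of the same $G$-orbit are never strictly comparable (via Proposition~\ref{p3.9}) is both unnecessary and not quite justified as stated --- for an infinite group $G$, the relation $\omega'\prec\sigma(\omega')$ only produces an infinite ascending chain rather than an immediate contradiction. You do not need this step at all: once you know $\on{Orb}_{A_1}\cup\on{Orb}_{A_4}=\on{MinSc}$ (resp.\ $\bigcup_{A\in T}\on{Orb}_{\omega_A}=\on{MinSc}$ when $\ell=4$), minimality alone forbids any strict relation between two nonzero proper semi-conformal vectors, whether or not they lie in the same orbit.
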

\begin{remark}
 Our aim is to understand vertex operator algebra theory by the geometric object $\on{Sc}(V,\omega)$.
  For this purpose, at first, we must understand clearly the structures of  $\on{Sc}(V,\omega)$.  From above results related to the affine vertex operator algebra $(V_{\widehat{\sl}_2}(\ell,0),\omega)$, we can find the variety $\on{Sc}(V, \omega)$ is closely related to the level $\ell$ for the affine vertex operator algebra $(V_{\widehat{\sl}_2}(\ell,0),\omega)$, and this work will lead us to describe $\on{Sc}(V,\omega)$  for general affine vertex operator algebras and some even lattice vertex operator algebras. In future work,  we shall focus on cases for general affine vertex operator algebras and some even lattice vertex operator algebras and discuss structures  and properties of the varieties $\on{Sc}(V,\omega)$  to understand  vertex operator algebra theory.
\end{remark}

\end{document}